\tikzstyle{vertex}=[circle,fill=black!25,minimum size=20pt,inner sep=0pt]
\tikzset{->-/.style={decoration={
  markings,
  mark=at position .5 with {\arrow{>}}},postaction={decorate}}}
\DeclareMathOperator{\id}{id}
\DeclareMathOperator{\Hom}{Hom}
\DeclareMathOperator{\op}{op}
\DeclareMathOperator{\Ima}{im}
\DeclareMathOperator{\Loc}{Loc}
\DeclareMathOperator{\Ent}{Ent}
\DeclareMathOperator{\Fac}{Fac}
\DeclareMathOperator{\Flo}{Flo}
\DeclareMathOperator{\cof}{lk}
\DeclareMathOperator{\Tot}{Tot}
\DeclareMathOperator{\NN}{\overline{N}}
\DeclareMathOperator{\DN}{N\overline{N}}
\DeclareMathOperator{\diag}{diag}
\newcommand{\R}{\mathbb{R}} 
\newcommand{\Z}{\mathbb{Z}}
\newcommand{\Q}{\mathbb{Q}}
\newcommand{\s}{\mathscr{S}}
\newcommand{\p}{\mathscr{P}}
\newcommand{\N}{\mathcal{N}}
\newcommand{\bigo}{\mathcal{O}}
\newcommand{\Set}{\mathsf{Set}}
\newcommand{\sSet}{\mathsf{sSet}}
\newcommand{\Cat}{\mathsf{Cat}}
\newcommand{\diagram}[3]{\matrix (#1) [matrix of math nodes,row
  sep={#2},column sep={#3},text height=1.5ex,text
  depth=0.25ex]}
\theoremstyle{plain}
\newtheorem{theorem}{Theorem}[section]
\newtheorem{proposition}[theorem]{Proposition}
\newtheorem{corollary}[theorem]{Corollary}
\newtheorem{lemma}[theorem]{Lemma}
\theoremstyle{definition}
\newtheorem{definition}[theorem]{Definition}
\newtheorem{example}[theorem]{Example}
\newtheorem{thmx}{Theorem}
\def\namedlabel#1#2{\begingroup
  #2%
  \def\@currentlabel{#2}%
  \phantomsection\label{#1}\endgroup
}
\title{The discrete flow category: structure and computation}
\author{Bjørnar Gullikstad Hem}
\thanks{Email: bjornar.hem@epfl.ch }
\date{\today}
\begin{document}
\maketitle

\begin{abstract}
In this article, we use concepts and methods from the theory of simplicial sets to study discrete Morse theory. We focus on the discrete flow category introduced by Vidit Nanda, and investigate its properties in the case where it is defined from a discrete Morse function on a regular CW complex. We design an algorithm to efficiently compute the Hom posets of the discrete flow category in this case. Furthermore, we show that in the special case where the discrete Morse function is defined on a simplicial complex, then each Hom poset has the structure of a face poset of a regular CW complex. Finally, we prove that the spectral sequence associated to the double nerve of the discrete flow category collapses on page 2.

\end{abstract}

\setcounter{tocdepth}{1}
\tableofcontents

\section{Introduction}

Differentiable functions on smooth manifolds allow us to deduce properties of the manifolds' topology, through the use of Morse theory. For Morse theory to apply, the differential function has to satisfy the \emph{Morse condition}. Such functions are called \emph{Morse functions}, and they are in many ways abundant. A Morse function can be viewed as a ``height function'' on a manifold that allows you to decompose the manifold into smaller, more manageable parts. Morse theory has had vast applications, not only in topology, but also in other areas, such as in the study of dynamical systems \cite{smale}.

The combinatorial counterpart of Morse theory, discrete Morse theory, was introduced by Robin Forman \cite{Forman1998}, and studies \emph{discrete Morse functions} on \emph{simplicial complexes}. Like Morse theory, discrete Morse theory allows us to deduce information about the topology of a simplicial complex. As the name suggests, the data of a discrete Morse function is discrete, making discrete Morse theory suitable for computer applications. Discrete Morse theory has seen much use in computer applications in later years, particularly in topological data analysis (TDA).
\textcolor{black}{As an example, in \cite{Bauer_2021}, Bauer directly utilizes discrete Morse theory to create an efficient algorithm to compute the persistent homology of Vietoris-Rips complexes.}


One of the main foundations for this article is a paper by Nanda \cite{Nanda} that introduces the \emph{discrete flow category}, an analogue in discrete Morse theory for the \emph{flow category} of Cohen, Jones and Segal \cite{CohenJonesSegal}. The flow category is a category consisting of critical points and flow lines of a Morse function, and whose classifying space captures the homotopy type of the manifold on which the Morse function is defined. Likewise, the discrete flow category consists of critical \emph{cells} and \emph{gradient paths} of a discrete Morse function, and has a classifying space with the homotopy type of the simplicial complex on which the discrete Morse function is defined. While the flow category is a \emph{topological category}, meaning that the Hom sets are topological spaces, the discrete flow category is a \emph{p-category}, meaning that the Hom sets are posets. Constructing the classifying space of either of these categories utilizes concepts from simplicial sets, inspiring the further study of discrete Morse theory from the perspective of simplicial sets.

Another inspiration for this article is a paper by Vaupel, Hermansen and Trygsland on section complexes of simplicial height functions \cite{sectionComplexes}. In the paper, a bisimplicial set is constructed from a \emph{height function} on a simplicial set. This bisimplicial set gives a spectral sequence that can be used to compute the homology of the simplicial set, and it is shown that under certain conditions the spectral sequence collapses on the second page.

In this article we explore Nanda's discrete flow category (in the special case where it is defined from a discrete Morse function), and prove several properties of it. In particular, we show that in the case where the discrete Morse function is defined on a simplicial complex (not a general CW complex), then the Hom posets in the discrete flow category are CW posets, meaning that they have the structure of a face poset of a regular CW complex. We will refer to this as Theorem A.
\begin{thmx}\label{theoremA}
    Let $C$ be the discrete flow category of a discrete Morse function on a simplicial complex. Then the for all objects $w$ and $z$ in $C$, the poset $\Hom_{C}(w, z)^{\op}$ is the face poset of some regular CW complex.
\end{thmx}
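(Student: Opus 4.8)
The plan is to understand the combinatorial structure of the Hom posets in the discrete flow category concretely, and then verify the defining axioms of a CW poset directly. Recall that a finite poset $P$ is the face poset of a regular CW complex (a \emph{CW poset}, in the sense of Björner) if and only if $P$ has a least element $\hat{0}$, $P$ is nontrivial, and for every $x \in P$ with $x > \hat{0}$ the order complex of the open interval $(\hat{0}, x)$ is homeomorphic to a sphere $S^{d}$ for some $d$ (equivalently, every such lower interval $[\hat 0, x]$ has the homotopy type coming from a cell, and the proper part of each interval is a sphere). So the concrete goal is: after adjoining a formal minimum to $\Hom_C(w,z)^{\op}$, show that each open lower interval is a sphere.

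First I would unwind the description of the elements of $\Hom_C(w,z)$ for a discrete Morse function on a \emph{simplicial} complex. The morphisms are gradient paths (zig-zag sequences of cells alternating along the modified Hasse diagram of the discrete vector field), and the poset structure records when one gradient path refines or dominates another. I expect that an earlier section of the paper gives an explicit combinatorial model — presumably the algorithm for computing the Hom posets — and I would read off from it that each element of $\Hom_C(w,z)$ is encoded by a sequence of simplices, with the order relation given by some kind of face/coface or interleaving relation. The key structural claim to extract is that $\Hom_C(w,z)^{\op}$ decomposes as (a product or a join of) intervals in face posets of simplices, since in a simplicial complex the relevant local data at each step of a gradient path is the boundary structure of an individual simplex, which is itself a Boolean-type lattice.

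Next I would establish that these building-block posets are CW posets and that the relevant closure operations preserve the CW poset property. The workhorse facts are: the face poset of a simplex is a CW poset; and the class of CW posets is closed under the operations that assemble the Hom poset — in particular, Björner's result that products and joins of CW posets (appropriately interpreted, using that the join of spheres $S^{m} * S^{n} \cong S^{m+n+1}$) yield CW posets. Concretely, for a fixed $x > \hat 0$ in $\Hom_C(w,z)^{\op}$, I would identify the open interval $(\hat 0, x)$ with a join/product of proper intervals coming from the individual simplices traversed by the gradient path corresponding to $x$, and then invoke the sphericity of those pieces together with the join-of-spheres computation to conclude that $(\hat 0, x)$ is itself a sphere. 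That sphericity of all lower intervals, plus the existence of $\hat 0$, is exactly the CW poset condition.

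The hard part will be the middle step: producing the precise combinatorial isomorphism between an interval of $\Hom_C(w,z)^{\op}$ and a join/product of simplex-intervals, and in particular checking that the simpliciality hypothesis (as opposed to a general regular CW complex) is exactly what makes each local piece Boolean. I anticipate that for a general regular CW complex the local model at a cell need not be a simplex boundary, so the interval need not be a sphere — this is where Theorem A genuinely uses ``simplicial complex.'' So I would take care to isolate the one lemma that says the faces below a simplex form a Boolean lattice, and organize the argument so that the sphericity of intervals reduces cleanly to repeated application of the join-of-spheres identity, making the dependence on simpliciality transparent.
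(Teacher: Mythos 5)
Your proposal takes essentially the same route as the paper's proof: Björner's sphere criterion, a decomposition of each lower interval $(\hat 0, x]$ as a product of intervals $(\hat 0, g_i]$ coming from entrance-path Hom posets of the individual simplices traversed by the gradient path (the paper's \autoref{thm:flo_po_desc}), identification of each factor's proper part with the barycentric subdivision of a simplex boundary --- the precise point where simpliciality is used, exactly as you anticipate --- and the join-of-spheres lemma (\autoref{lemma:poset_join}, via Quillen) to conclude sphericity. The middle step you single out as hard is indeed the paper's main technical result (\autoref{thm:flo_po_desc}, proved by a lengthy diagram argument together with an algebraic invariant), so your plan and the paper's proof coincide in both structure and emphasis.
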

\noindent This result gives a simpler way of realizing the Hom posets as topological spaces: instead of taking the geometric realization of the nerve, one can take the corresponding regular CW complex.

 \textcolor{black}{
Note that \autoref{theoremA} bears similarities with work done in \cite{nanda2018discrete}, in particular with \cite[Proposition 2.45]{nanda2018discrete} and \cite[Corollary 3.41]{nanda2018discrete}. There are, however, several important differences between \autoref{theoremA} and these results. Firstly, the flow category defined in \cite{nanda2018discrete} is different from the one we use here and in \cite{Nanda} (in particular, in \cite{nanda2018discrete}, flow paths cannot have intermediate cells). Secondly, \cite[Corollary 3.41]{nanda2018discrete} concerns the poset of all flow paths in a CW complex, while \autoref{theoremA} concerns the poset of flow paths between two given cells. Finally, the results in \cite{nanda2018discrete} holds for all regular CW complexes, while \autoref{theoremA} holds when the regular CW complex is a simplicial complex, and a counterexample for the general case is given in \autoref{ex:theoremAcounter}.
}

Furthermore, we construct an algorithm, \autoref{algorithm_1}, to compute the Hom posets of the discrete flow category, where the input is a discrete Morse function defined on a regular CW complex.
For the algorithm, we use several of the results needed for proving \autoref{theoremA}.
We provide a Python implementation for the algorithm, which can be found at \mbox{\url{https://github.com/bjornarhem/discrete_flow}} .

Finally, we investigate the spectral sequence associated to the \emph{double nerve} of the discrete flow category, a bisimplicial set whose realization is the classifying space of the category.
As in the paper by Vaupel et al. \cite{sectionComplexes}, this spectral sequence computes the homology of the regular CW complex from which we define the discrete flow category. 
Classifying spaces of p-categories are often complicated and hard to compute, and this is also the case for the discrete flow category.
However, as we prove in this article, the spectral sequence associated to its double nerve has a particularly nice structure that causes it to collapse on page 2. We will refer to this as Theorem B.
\begin{thmx}\label{theoremB}
    The spectral sequence associated to the double nerve of a discrete flow category collapses on page 2.
\end{thmx}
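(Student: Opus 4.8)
The plan is to analyze the first-quadrant homology spectral sequence obtained by filtering the double nerve $N\overline{N}C$ along its ``categorical'' simplicial direction, and to show that its $E^{2}$-page is supported on bidegrees that leave no room for higher differentials. Writing $\Hom_{C}(w,z)$ for the Hom posets and $B\Hom_{C}(w,z)$ for the geometric realization of their nerves, the two simplicial directions of $N\overline{N}C$ are the levelwise nerve of the Hom posets and the nerve coming from composition in $C$. Since the realization of the double nerve is the classifying space $BC$, which as recalled in the introduction carries the homotopy type of the underlying regular CW complex $X$, the spectral sequence converges to $H_{*}(X)$. Concretely, after taking vertical homology one obtains
\[
E^{1}_{p,q} \;=\; \bigoplus_{x_{0},\dots,x_{p}} H_{q}\!\left(\prod_{i=1}^{p} B\Hom_{C}(x_{i-1},x_{i})\right),
\]
the sum running over chains of objects, with $d^{1}$ the alternating sum of the face maps induced by composition and by deletion of the two outer objects.

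The technical heart of the argument is a concentration statement for the Hom posets: I would show that $\widetilde{H}_{*}\big(B\Hom_{C}(w,z)\big)$ vanishes except in degree $\dim w - \dim z - 1$. This is where \autoref{theoremA} does the work, since it identifies $B\Hom_{C}(w,z)$ with a regular CW complex of exactly this dimension; the remaining point is to upgrade the dimension bound to homological concentration, which I expect to follow from the combinatorics of gradient paths, for instance by exhibiting a shelling or by recognizing the complex as a wedge of top-dimensional spheres. Granting this, the K\"unneth theorem lets me read off the internal degrees contributing to each $E^{1}_{p,q}$, and a short computation shows that a composition face map $d_{i}$ with $0<i<p$ sends a product of top-degree classes into the homology of $B\Hom_{C}(x_{i-1},x_{i+1})$ in degree $\dim x_{i-1}-\dim x_{i+1}-2$, which lies one below the concentration degree and therefore vanishes. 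This is the discrete shadow of the fact that a broken gradient path represents a boundary stratum, and it drastically simplifies the $E^{1}$-differential on the associated graded for the internal degree.

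With the inner compositions killed in this way, $d^{1}$ reduces to the outer face maps, and I would aim to identify the resulting $E^{2}$-page with the homology of Forman's Morse chain complex of $X$. Since that complex independently computes $H_{*}(X)$, and the spectral sequence abuts to $H_{*}(X)$, the total rank of $E^{2}$ in each degree already matches the abutment; as $E^{\infty}$ is a subquotient of $E^{2}$ of the same total rank, every differential $d^{r}$ with $r\geq 2$ must vanish and the sequence collapses on page $2$. The main obstacle is the pair of structural inputs feeding this count, namely the concentration lemma for the Hom posets and the precise identification of the surviving $E^{2}$-page with the Morse complex; once the Hom posets are understood well enough to establish both, the collapse is a formal consequence of the bidegree bookkeeping.
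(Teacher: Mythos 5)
Your proposal uses the transposed spectral sequence relative to the paper (you take homology in the Hom-poset direction first and let $d^1$ run along the categorical direction), which is legitimate in principle, but the technical heart of your argument is false. The concentration claim --- that $\widetilde{H}_*\bigl(|\N(\Hom_C(w,z))|\bigr)$ vanishes except in degree $\dim w - \dim z - 1$ --- is contradicted by the paper's own 2--torus example (\autoref{fig:torus_dmf}, \autoref{fig:Hom_A_a}): there the poset $\Hom(A,a)$, with $A$ the critical $2$--cell and $a$ the critical $0$--cell, consists of $36$ morphisms forming four zigzag components, each contractible, so its reduced homology is $\Z^3$ in degree $0$, while your concentration degree would be $\dim A - \dim a - 1 = 1$. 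These Hom posets are discrete analogues of compactified moduli spaces of broken flow lines; like their smooth counterparts (manifolds with corners, e.g.\ intervals when the index difference is $2$), they are typically built from contractible pieces rather than wedges of top-dimensional spheres, so no shelling argument can rescue the claim. Once concentration fails, the vanishing of the interior composition face maps on $E^1$ fails with it --- indeed, in the torus example it is exactly the identification of compositions $g\circ f$ with sums $g+f$ on page $1$ that produces $H_1(T^2)$ --- and the identification of $E^2$ with the Morse complex, together with the rank count, has no foundation. Two further problems: your appeal to \autoref{theoremA} restricts you to simplicial complexes, whereas \autoref{theoremB} is asserted for regular CW complexes (and the paper's Poincar\'e-sphere example shows the CW-poset structure can genuinely fail there); and the final rank-counting step, even if the inputs held, only rules out differentials up to torsion unless you work over a field.

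For contrast, the paper's proof runs the filtration the other way and needs no knowledge of the topology of the individual Hom posets, which is precisely why it works in full generality. Taking homology along the categorical direction first, $E^1_{p,q}$ is the $q$th homology of the nerve of the category $(\NN \Flo_{\Sigma}[X])([p])$ whose morphisms are $p$--chains in the Hom posets; \autoref{thm:nn_dfc_is_ufc} shows each of these is a unique factorization category, so by \autoref{theoremC} (via \autoref{cor:ufc_homology}) its nerve has no homology above degree $1$, giving $E^1_{p,q}=0$ for $q\ge 2$. A separate connected-components argument kills the bottom row $E^2_{p,0}$ for $p\ge 1$, so page $2$ is supported on $E^2_{0,0}$ and the row $q=1$, and the collapse follows from bidegree bookkeeping alone --- no identification with the Morse complex and no comparison with the abutment is required.
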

The definition of the mentioned spectral sequence will be made formal in the text (in particular, there are two possible choices). We also show how you can ignore degeneracies when computing the spectral sequence, which makes the computation relatively simple. Finally, we provide several examples of computing the spectral sequence, in which we make use of \autoref{algorithm_1} to compute the Hom sets.
We also provide a Python implementation of the spectral sequence computation at \mbox{\url{https://github.com/bjornarhem/discrete_flow} }, that outputs the spectral sequence pages given any discrete Morse function.

To prove \autoref{theoremB}, we generalize the concept of \emph{simplicial collapse} to simplicial sets. We then introduce \emph{unique factorization categories}, a class of categories in which all morphisms have a unique decomposition into ``indecomposable'' morphisms. Finally, we apply our generalization of simplicial collapse to prove the following result.
\begin{thmx}\label{theoremC}
    Let $C$ be a unique factorization category. Then the nerve of $C$ deformation retracts to a simplicial set whose $n$--simplices are all degenerate for $n > 1$.
\end{thmx}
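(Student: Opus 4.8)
The plan is to realize the deformation retraction as an explicit acyclic matching on the non-degenerate simplices of the nerve $N$ of $C$, and then to invoke the generalization of simplicial collapse to simplicial sets developed above, which turns such a matching into a deformation retraction onto the subcomplex of unmatched (critical) simplices. Recall that a non-degenerate $n$--simplex of $N$ is a chain $(f_1,\dots,f_n)$ of composable non-identity morphisms, and that in a unique factorization category every non-identity morphism $f$ admits a unique expression $f = g_\ell \circ \cdots \circ g_1$ as a composite of indecomposables; write $\ell(f)$ for this length, so that $f$ is indecomposable exactly when $\ell(f)=1$. The critical simplices of the matching will be precisely the objects of $C$ (as $0$--simplices) together with the indecomposable morphisms (as $1$--simplices). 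Since these live only in dimensions $0$ and $1$, the retract is a simplicial set all of whose simplices in dimension $>1$ are degenerate, as claimed.

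The matching is defined by inspecting only the first morphism. Given $\gamma = (f_1,\dots,f_n)$ with $n \ge 1$, factor off the first indecomposable factor of $f_1$, writing $f_1 = f_1' \circ g$ with $g$ indecomposable. If $f_1$ is decomposable, I match $\gamma$ upward to $(g, f_1', f_2, \dots, f_n)$; if $f_1$ is indecomposable and $n \ge 2$, I match $\gamma$ downward to $(f_2 \circ f_1, f_3, \dots, f_n)$; and if $f_1$ is indecomposable with $n=1$, I leave $\gamma$ critical. Objects are critical as well. These upward and downward operations are mutually inverse bijections between the simplices whose first morphism is decomposable and those whose first morphism is indecomposable of length $n \ge 2$, and each matched pair is related by the inner face map $d_1$ (composing, respectively splitting, at the first interior vertex). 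Consequently every non-degenerate simplex of dimension at least $2$, and every decomposable morphism, is matched, and the critical simplices are exactly the objects and the indecomposable morphisms.

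The crux is acyclicity, i.e.\ the absence of nontrivial closed gradient paths. Suppose $\gamma$ is matched upward to $\tau = (g, f_1', f_2, \dots, f_n)$, so $\gamma = d_1\tau$. Every other face of $\tau$ either drops the last vertex or composes two morphisms at an interior vertex beyond the first, and each of these retains $g$ as its first morphism; such faces are therefore matched downward or critical, and cannot continue a gradient path as a lower element. The single exception is the outer face $d_0\tau = (f_1', f_2, \dots, f_n)$, which deletes $g$. Hence any gradient path is forced to proceed $\gamma \nearrow \tau \searrow d_0\tau$, and the first morphism of $d_0\tau$ is $f_1'$, with $\ell(f_1') = \ell(f_1)-1$ strictly smaller than the length of the first morphism of $\gamma$. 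Thus the length of the first morphism strictly decreases along every gradient path, so no path can close up and the matching is acyclic. Applying the simplicial-set version of discrete Morse theory then yields a deformation retraction of $N$ onto the $1$--dimensional sub-simplicial set spanned by the critical simplices, completing the proof.

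I expect the main obstacle to be exactly this acyclicity step, and in particular the interplay between the matching, which is defined through the inner face $d_1$, and the outer face $d_0$: it is the forced appearance of $d_0$ as the only escape route that both threatens to create cycles and, once controlled by the length function, rules them out. This is the reason for matching on the first morphism rather than an arbitrarily chosen one. A secondary point requiring care is that $N$ is a simplicial set rather than a simplicial complex, so one must check that the proposed collapses are compatible with the degeneracy operators and that the free-face condition is correctly interpreted for non-degenerate simplices; verifying these hypotheses of the generalized simplicial collapse for the matching above is the remaining routine check.
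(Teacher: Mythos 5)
Your matching is well defined and your acyclicity argument is genuinely correct: the downward-then-upward consistency uses uniqueness of factorization (the first indecomposable factor of $f_2 \circ f_1$ with $f_1$ indecomposable is $f_1$), and the observation that the only face of $\tau = (g, f_1', f_2, \dots, f_n)$ that can be matched upward is $d_0\tau$, whose first morphism has strictly smaller factorization length, does rule out closed gradient paths. The gap is the step where you ``invoke the generalization of simplicial collapse to simplicial sets developed above, which turns such a matching into a deformation retraction onto the subcomplex of unmatched (critical) simplices.'' No such statement exists in the paper, and you do not prove one. Section 7 proves only this: if $\{\tau, \sigma\}$ is a \emph{free pair}, meaning every coface of $\tau$ is a face of $\sigma$, then deleting the interval $[\tau,\sigma]$ is a collapse. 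Your matched pairs are \emph{not} free pairs in $\N(C)$: for $\gamma = (f_1,\dots,f_n)$ matched to $\tau = (g, f_1', f_2,\dots,f_n)$, the simplex $\gamma$ has many cofaces that are not faces of $\tau$, e.g.\ $(f_1,\dots,f_n,h)$ for any non-identity $h$ composable on the right, or any simplex refining one of $f_2,\dots,f_n$. So the paper's collapse theorem cannot be applied to your pairs one at a time; you must first exhibit an ordering of the matched pairs such that, when each pair is removed, every other coface of its lower member has already been deleted, i.e.\ the pair has \emph{become} free in the remaining simplicial set. The existence of such an ordering (process dimensions top-down, linearly extend the gradient-path order within a dimension, using finiteness and the fact that your critical cells span a subcomplex) is exactly the ``matching implies sequence of collapses'' theorem of discrete Morse theory; it is where your acyclicity would actually be consumed, it is standard for finite simplicial complexes, but it is not in the paper and is not routine bookkeeping about degeneracies, contrary to your closing remark. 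As it stands, your proof rests on an unproven theorem.

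For comparison, the paper's own proof avoids matching machinery entirely by choosing pairs that satisfy the strong free-pair condition from the outset: it takes a 1--simplex $f$ whose factorization $f = f_k \circ \dots \circ f_1$ has \emph{maximal} length $k$ among all morphisms, and pairs it with the $k$--simplex $\{f_1,\dots,f_k\}$; maximality forces every coface $(g_1,\dots,g_m)$ of $f$ to compose exactly to $f$, hence to be a coarsening of the factorization and so a face of $\{f_1,\dots,f_k\}$. Removing these intervals (which have codimension $k-1$, not $1$) for decreasing $k$ uses only the collapse theorem the paper actually proved. Your route buys a conceptually uniform, single global matching instead of an induction on maximal length, and it would generalize to situations where one wants the full strength of discrete Morse theory; but to make it complete you must either prove the ordering lemma above for regular simplicial sets, or reorganize your collapses so that each removed pair is free at the moment of removal, as the paper does.
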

\noindent This theorem is then used in the proof of \autoref{theoremB}.

\subsection*{Outline}

Section 2 and 3 are dedicated to reviewing the theory of simplicial sets and of discrete Morse theory, respectively, which will be needed for the following sections. Section 4 contains the necessary preliminaries on p-categories, and Section 5 is a review of the discrete flow category of a discrete Morse function. In Section 6, we prove several results on the Hom posets of the discrete flow category, and conclude with constructing \autoref{algorithm_1} and proving \autoref{theoremA}. In Section 7, we introduce simplicial collapse on simplicial sets and unique factorization categories, and prove \autoref{theoremC}. Finally, in Section 8, we explain how the discrete flow category gives rise to a spectral sequence, and prove \autoref{theoremB}.

\subsection*{Acknowledgments}

This work is based on my master thesis, that I wrote for NTNU in Spring 2023.
I would like to thank my master thesis supervisors, Marius Thaule and Melvin Vaupel, who have
both been incredibly helpful and supportive. They have gone above and beyond,
both in encouraging me to explore my ideas and in helping me seek answers to
the countless questions I have asked them. Their feedback, which has consistently
been both thorough and constructive, has been invaluable in this work.
I would also like to thank my PhD supervisor, Kathryn Hess Bellwald, for providing feedback in the final steps of writing this article.
 \textcolor{black}{Finally, I would like to thank the two referees of this paper, who both made insightful and useful comments.}

\section{Preliminaries on simplicial sets}

In this section we define our notation related to simplicial sets, and state some results that we will need.
 \textcolor{black}{For a thorough exposition of simplicial sets, see e.g., \cite{GoerssJardine} or \cite{Friedman}.}

A \emph{simplicial set} is a functor
\begin{equation*}
    \Delta^{\op} \to \Set,
\end{equation*}
where $\Delta$ is the simplex category and $\Set$ is the category of sets. A morphism of simplicial sets is a natural transformation between two such functors, and we denote the category of simplicial sets by $\sSet$. The $i$th face map is denoted by $d_i$, and the $i$th degeneracy map is denoted by $s_i$. For a simplicial set $X$, we denote its geometric realization by $|X|$. For a small category $C$, we denote its nerve by $\N(C)$.

%
%
%
%
%
%
%

\subsection{Barycentric subdivision}

We here define what we mean by the barycentric subdivision of a regular CW complex. Recall that a regular CW complex is a CW complex where each attaching map is a homeomorphism onto its image.
\begin{definition}
    Let $X$ be a regular CW complex. The barycentric subdivision of $X$, denoted $T(X)$, is a simplicial complex whose vertices are the cells of $X$ and whose $n$--simplices are sequences of distinct cells $(\sigma_0, \dots, \sigma_n)$ such that $\sigma_0 \subseteq \dots \subseteq \sigma_n$.
\end{definition}
An example of a barycentric subdivision is given in \autoref{fig:bary_subdiv}.

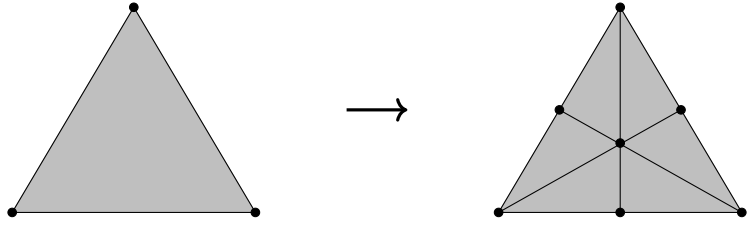
\begin{figure}[htbp]
    \centering
    \begin{tikzpicture}[scale=.8]
        \filldraw[lightgray] (0,0) -- (4,0) -- (2,3.4) -- cycle;
        \filldraw (0,0) circle (2 pt);
        \filldraw (4,0) circle (2 pt);
        \filldraw (2,3.4) circle (2 pt);
        \draw (0,0) -- (4,0);
        \draw (0,0) -- (2,3.4);
        \draw (4,0) -- (2,3.4);

        \draw[very thick, ->] (5.5, 1.7) -- (6.5, 1.7);

        \filldraw[lightgray] (8,0) -- (12,0) -- (10,3.4) -- cycle;
        \filldraw (8,0) circle (2 pt);
        \filldraw (10,0) circle (2 pt);
        \filldraw (12,0) circle (2 pt);
        \filldraw (11,1.7) circle (2 pt);
        \filldraw (10,3.4) circle (2 pt);
        \filldraw (9,1.7) circle (2 pt);
        \filldraw (10,1.15) circle (2 pt);
        \draw (8,0) -- (12,0);
        \draw (8,0) -- (11,1.7);
        \draw (8,0) -- (10,3.4);
        \draw (12,0) -- (10,3.4);
        \draw (12,0) -- (9,1.7);
        \draw (10,0) -- (10,3.4);
    \end{tikzpicture}
    \caption{The barycentric subdivision of the simplicial complex $\Delta^2$.}
    \label{fig:bary_subdiv}
\end{figure}

\begin{theorem}
    If $X$ is a regular CW complex, then the geometric realization of the barycentric subdivision, $|T(x)|$, is homeomorphic to $X$.
\end{theorem}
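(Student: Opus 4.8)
The plan is to recognize $T(X)$ as the order complex of the face poset $P(X)$ of $X$ (its simplices are exactly the chains $\sigma_0 \subseteq \dots \subseteq \sigma_n$ of distinct cells), and then to build the homeomorphism $|T(X)| \to X$ by induction over the skeletal filtration $X^{(0)} \subseteq X^{(1)} \subseteq \cdots$. The crucial structural input, and the only place where regularity is genuinely used, is the standard fact that in a regular CW complex each closed cell $\overline{\sigma}$ is homeomorphic to a disk $D^{\dim \sigma}$, that its topological boundary $\partial \sigma = \overline{\sigma} \setminus \sigma$ is a \emph{subcomplex} homeomorphic to the sphere $S^{\dim \sigma - 1}$, and that the cells contained in $\overline{\sigma}$ are precisely the faces of $\sigma$ in $P(X)$. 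I would take these as known properties of regular CW complexes. The base case is immediate, since $T(X^{(0)})$ is a discrete set of vertices matching the $0$-cells of $X$.

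The key combinatorial observation driving the inductive step is that, for a fixed cell $\sigma$, every chain $\sigma_0 \subseteq \dots \subseteq \sigma_k$ of faces of $\sigma$ either satisfies $\sigma_k = \sigma$ or lies entirely in $\partial \sigma$. Hence the full subcomplex $T(\overline{\sigma}) \subseteq T(X)$ spanned by the faces of $\sigma$ is the simplicial join $\{\sigma\} * T(\partial \sigma)$, that is, the cone on $T(\partial \sigma)$ with apex the vertex $\sigma$. Passing to geometric realizations, $|T(\overline{\sigma})|$ is the topological cone on $|T(\partial \sigma)|$. Applying the inductive hypothesis to the regular CW complex $\partial \sigma$ gives $|T(\partial \sigma)| \cong \partial \sigma \cong S^{\dim \sigma - 1}$, and since the cone on a sphere is a disk, this yields $|T(\overline{\sigma})| \cong D^{\dim \sigma} \cong \overline{\sigma}$, with the boundary homeomorphism extended radially over the cone coordinate.

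To pass from the $(n-1)$--skeleton to the $n$--skeleton, I would exploit that $X^{(n)}$ is the pushout of $\coprod_\sigma \partial \sigma \hookrightarrow \coprod_\sigma \overline{\sigma}$ along the attaching maps into $X^{(n-1)}$, while $T(X^{(n)})$ is the analogous pushout of $\coprod_\sigma T(\partial \sigma) \hookrightarrow \coprod_\sigma T(\overline{\sigma})$ over $T(X^{(n-1)})$, and geometric realization preserves these pushouts. The radial cone homeomorphisms from the previous step, together with the inductive homeomorphism on $|T(X^{(n-1)})|$, then glue to a homeomorphism $|T(X^{(n)})| \to X^{(n)}$ via the universal property of the pushout. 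Since all of these are compatible with the filtration, the skeletal homeomorphisms assemble to a homeomorphism on the colimit, giving $|T(X)| \cong X$.

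The main obstacle is the gluing step: one must verify that the cone homeomorphism $|T(\overline{\sigma})| \cong \overline{\sigma}$ can be chosen to extend the already-constructed boundary homeomorphism and to agree with the CW attaching map, and that geometric realization really does commute with the relevant pushouts and with the final colimit. The point-set topology here is manageable, since both CW complexes and realizations of simplicial complexes are colimits of compact pieces, but it should be checked carefully. The essential role of regularity is to guarantee that $\partial \sigma$ is a subcomplex, so that the cone decomposition $T(\overline{\sigma}) = \{\sigma\} * T(\partial \sigma)$ and the inductive hypothesis both apply; without regularity the face poset would fail to detect the boundary sphere and the statement would break down.
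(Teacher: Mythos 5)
Your proposal is correct, and it is essentially the classical argument that the paper itself defers to (it gives no proof, citing Lundell--Weingram, pp.\ 80--81): skeletal induction, the cone decomposition $T(\overline{\sigma}) = \{\sigma\} * T(\partial\sigma)$ using that regularity makes $\partial\sigma$ a subcomplex homeomorphic to a sphere, and radial (Alexander-trick) extension of the already-constructed boundary homeomorphism, glued along the skeletal pushouts. The only point to tighten is that the inductive hypothesis should be stated in the strengthened form that the homeomorphism $|T(X^{(n-1)})| \to X^{(n-1)}$ carries each $|T(\overline{\tau})|$ onto $\overline{\tau}$, so that its restriction to $|T(\partial\sigma)|$ is the boundary homeomorphism you then cone off.
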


For a proof, see \cite[pp.\ 80--81]{Lundell}.

\subsection{Simplicial homology}
\label{sec:simplicial_homology}

%
%
%


In this section we state some results on simplicial homology that will be needed in later sections. We get the following result by combining Theorem 2.1 and Theorem 2.4 in \cite[Chapter III]{GoerssJardine}.

\begin{theorem}\label{thm:simpl_homology_degen}
    Let $C$ be the (unnormalized) simplicial chain complex associated to a simplicial set, and $D$ its subcomplex of degeneracies. Then, for all $n$ there is an isomorphism
    \begin{equation*}
        H_n(C) \cong H_n(C / D),
    \end{equation*}
    induced by the projection map $p \colon C \to C/D$.
\end{theorem}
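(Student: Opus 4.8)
The plan is to deduce the isomorphism from the tautological short exact sequence of chain complexes
\begin{equation*}
    0 \to D \xrightarrow{\iota} C \xrightarrow{p} C/D \to 0,
\end{equation*}
which is exact by the definition of $C/D$ as a quotient complex (and makes sense because $D$ is a genuine subcomplex: the simplicial identities express the boundary of a degenerate simplex as a combination of degenerate simplices). Passing to the associated long exact sequence in homology, the relevant portion around degree $n$ reads
\begin{equation*}
    H_n(D) \to H_n(C) \xrightarrow{p_*} H_n(C/D) \to H_{n-1}(D).
\end{equation*}
Thus if I can show that $D$ is acyclic, i.e.\ $H_n(D) = 0$ for every $n$, then $p_*$ is squeezed between two vanishing groups and is forced to be an isomorphism in every degree. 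Since the map in question is exactly the one induced by $p$, the whole theorem reduces to the single statement that the subcomplex of degeneracies is acyclic.

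First I would recall the structure of the chains. Writing $X$ for the underlying simplicial set, $C_n$ is the free abelian group on $X_n$, and $D_n$ is the subgroup generated by the degenerate simplices, those lying in the image of some $s_i$. The key structural input is the Eilenberg--Zilber decomposition: every simplex $x \in X_n$ factors uniquely as $x = s_{i_1}\cdots s_{i_k}(y)$ for a uniquely determined iterated degeneracy and a nondegenerate simplex $y$. This yields a natural direct sum splitting $C_n \cong N_n \oplus D_n$, where $N_n$ is spanned by the nondegenerate simplices; after identifying $N$ with the normalized (Moore) complex $\bigcap_i \ker d_i$, one checks this is a splitting of chain complexes, and under it the projection $p$ restricts to an isomorphism $N \xrightarrow{\cong} C/D$. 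So acyclicity of $D$ is equivalent to the inclusion $N \hookrightarrow C$ being a homology isomorphism.

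The main obstacle is establishing this acyclicity. Here I would either (a) build an explicit contracting homotopy on $D$ assembled from the simplicial identities relating the $s_i$ and $d_i$, which is the classical route and is purely computational, or (b) filter $D$ by the largest index of degeneracy operator appearing, i.e.\ by the subgroups generated by $s_0, \dots, s_p$, and run a short induction showing each filtration quotient is contractible. In both approaches the simplicial identities do all the work, and the only delicate point is the bookkeeping of signs and indices; no genuinely new idea is needed beyond the decomposition above. Once $H_*(D) = 0$ is secured, the long exact sequence argument closes the proof and exhibits the isomorphism as $p_*$, precisely as asserted. This is the content one extracts by combining Theorems 2.1 and 2.4 of \cite[Chapter III]{GoerssJardine}: the former supplies the direct sum decomposition $C \cong N \oplus D$, and the latter the acyclicity of $D$.
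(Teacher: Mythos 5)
Your proof is correct, but it runs in the opposite logical direction from the paper. The paper offers no argument of its own: it obtains the theorem by citing \cite[Chapter III, Theorems 2.1 and 2.4]{GoerssJardine} — the inclusion of the normalized (Moore) complex $N \hookrightarrow C$ is a chain homotopy equivalence, and the composite $N \hookrightarrow C \to C/D$ is an isomorphism of chain complexes — whence $p_*$ is an isomorphism; it then deduces acyclicity of $D$ (the paper's Corollary 2.3) as a \emph{consequence}, via the long exact sequence of $0 \to D \to C \to C/D \to 0$. You do the reverse: you make $H_*(D) = 0$ the central lemma, to be proved directly by a contracting homotopy or by filtering $D$ by the top degeneracy index, and then squeeze $p_*$ between two vanishing groups in the same long exact sequence. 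Both routes rest on the same hard input (the classical normalization theorem of Eilenberg--Mac Lane), so neither is more elementary; what your order buys is that the paper's Corollary 2.3 comes for free as your lemma, while the paper's order yields the stronger chain-level statement (a chain homotopy equivalence, not merely a homology isomorphism) at the cost of leaning entirely on the citation. One small caution: the subgroup of $C_n$ spanned by the nondegenerate simplices is a degreewise complement to $D_n$ but is \emph{not} a subcomplex, so it cannot simply be ``identified'' with the Moore complex $\bigcap_{i<n} \ker d_i$; the latter is the correct chain-complex complement. This looseness is harmless here, since your main line (acyclicity of $D$ plus the long exact sequence) never actually uses the splitting.
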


The following corollary follows immediately from applying the long exact sequence of homology to the short exact sequence of chain complexes $0 \to D \to C \to C/D \to 0$.

\begin{corollary}\label{cor:simpl_homology_degen}
    Let $C$ be the (unnormalized) simplicial chain complex associated to a simplicial set, and $D$ its subcomplex of degeneracies. Then,
    \begin{equation*}
        H_n(D) \cong 0,
    \end{equation*}
    for all $n$.
\end{corollary}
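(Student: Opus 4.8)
The plan is to feed the given short exact sequence of chain complexes $0 \to D \to C \xrightarrow{p} C/D \to 0$ into the long exact sequence of homology and then extract the vanishing of $H_n(D)$ purely from exactness, using \autoref{thm:simpl_homology_degen} as the single nontrivial input. Writing $i \colon D \to C$ for the inclusion and $\partial$ for the connecting homomorphism, the long exact sequence takes the form
\begin{equation*}
    \cdots \to H_n(C) \xrightarrow{p_*} H_n(C/D) \xrightarrow{\partial} H_{n-1}(D) \xrightarrow{i_*} H_{n-1}(C) \xrightarrow{p_*} H_{n-1}(C/D) \to \cdots,
\end{equation*}
and the key observation is that \autoref{thm:simpl_homology_degen} asserts that \emph{every} map labelled $p_*$ here is an isomorphism.

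From this I would extract two facts for each fixed $n$. First, since $p_* \colon H_n(C) \to H_n(C/D)$ is surjective, exactness at $H_n(C/D)$ forces the connecting map $\partial \colon H_n(C/D) \to H_{n-1}(D)$ to be the zero map. Second, since $p_* \colon H_{n-1}(C) \to H_{n-1}(C/D)$ is injective, exactness at $H_{n-1}(C)$ shows that $\operatorname{im}(i_*) = \ker(p_*) = 0$, so $i_* \colon H_{n-1}(D) \to H_{n-1}(C)$ is also the zero map.

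Finally I would apply exactness at the middle term $H_{n-1}(D)$ itself: its kernel equals the image of the incoming $\partial$. Since $i_*$ is zero, the kernel of $i_*$ is all of $H_{n-1}(D)$; since $\partial$ is zero, its image is trivial. Combining these gives $H_{n-1}(D) = \ker(i_*) = \operatorname{im}(\partial) = 0$, and letting $n$ range over all integers yields $H_n(D) \cong 0$ for every $n$, as claimed.

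This argument is essentially mechanical, so I do not anticipate a genuine obstacle; the only point requiring mild care is bookkeeping the direction of the maps in the long exact sequence and invoking both the surjectivity and the injectivity of $p_*$ (at the two different degrees) at the correct junctions, since a single application of \autoref{thm:simpl_homology_degen} will not suffice on its own.
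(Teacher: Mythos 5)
Your proposal is correct and is exactly the paper's argument: the paper derives the corollary by applying the long exact sequence in homology to $0 \to D \to C \to C/D \to 0$ and using that \autoref{thm:simpl_homology_degen} makes $p_*$ an isomorphism in every degree, which is precisely the diagram chase you carry out. You have simply made explicit the "follows immediately" step, including the essential point that the isomorphism of \autoref{thm:simpl_homology_degen} is induced by the projection $p$ and not merely abstract.
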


\subsection{Bisimplicial sets}

A \emph{bisimplicial set} is a functor
\begin{equation*}
    \Delta^{\op} \times \Delta^{\op} \to \Set,
\end{equation*}
or equivalently,
\begin{equation*}
    \Delta^{\op} \to \sSet.
\end{equation*}

For a bisimplicial set $X$, the \emph{diagonal} of $X$, denoted $\diag X$, is the simplicial set given by:
\begin{itemize}
    \item $(\diag X)([n]) = X([n],[n])$,
    \item For $\theta \colon [m] \to [n]$, $(\diag X)(\theta) = X(\theta,\theta)$.
\end{itemize}
We define the \emph{geometric realization} of a bisimplicial set $X$, denoted $|X|$, as the realization of the diagonal, i.e., $|X| \coloneq |\diag X|$.

\section{Discrete Morse theory}

In this section, we briefly summarize the core ideas and definitions from discrete Morse theory.

In Morse theory, one studies certain differentiable functions, called \emph{Morse functions}, on smooth manifolds. These Morse functions allow us to deduce information about the topology of the manifold. For more information about Morse theory, see e.g. \cite{Milnor}. Discrete Morse theory is a discrete analogue of this, where one studies real-valued functions on CW complexes. The functions assign a single real value to each cell of the CW complex, and hence we get a discrete set of values.

 \textcolor{black}{We will only define discrete Morse functions for \emph{regular} CW complexes. For the general definition, that works for all CW complexes, see \cite{Forman1998}.}
Note also that one often consider only discrete Morse functions on simplicial complexes, which can be considered a special class of regular CW complexes.

\begin{definition}
    Let $X$ be a regular CW complex. A \emph{discrete Morse function} on $X$ is a function $f \colon X \to \R$ that assigns a real value to each cell of $X$ such that for each $k$--cell $x$, the following conditions hold.
    \begin{enumerate}
        \item There is at most one $(k+1)$--cell $y$ such that $x \subseteq y$ and $f(x) \ge f(y)$.
        \item There is at most one $(k-1)$--cell $y$ such that $y \subseteq x$ and $f(y) \ge f(x)$.
    \end{enumerate}
\end{definition}

\begin{definition}\label{def:critical_cell}
     \textcolor{black}{A $k$--cell} $x$ is called \emph{critical} with respect to a discrete Morse function $f\colon X \to \R$ if the following conditions hold.
    \begin{enumerate}
        \item There are no $(k+1)$--cells $y$ such that $x \subseteq y$ and $f(x) \ge f(y)$, and
        \item There are no $(k-1)$--cells $y$ such that $y \subseteq x$ and $f(y) \ge f(x)$.
    \end{enumerate}
\end{definition}
 \textcolor{black}{
Cells that are not critical are called \emph{regular}. Note that for a discrete Morse function, it is impossible for both of these conditions to fail to hold for a single cell (this is often called the exclusion lemma) \cite[Lemma 2.5]{Forman1998}. An example of a discrete Morse function is given in \autoref{fig:dmf_example}.
}

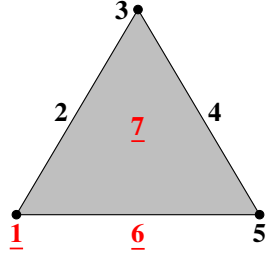
\begin{figure}[htbp]
    \centering
    \begin{tikzpicture}[scale=.8]
        \filldraw[lightgray] (0,0) -- (4,0) -- (2, 3.4) -- cycle;
        \draw (2, 1.4) node[red]{$\mathbf{\underline{7}}$};
        
        \draw (0, 0) -- (4, 0) node[midway, anchor=north, red]{$\mathbf{\underline{6}}$};
        \draw (0, 0) -- (2, 3.4) node[midway, anchor=east]{$\mathbf{2}$};
        \draw (4, 0) -- (2, 3.4) node[midway, anchor=west]{$\mathbf{4}$};
        
        \filldraw (0, 0) circle (2 pt) node[anchor=north, red]{$\mathbf{\underline{1}}$};
        \filldraw (2, 3.4) circle (2 pt) node[anchor=east]{$\mathbf{3}$};
        \filldraw (4, 0) circle (2 pt) node[anchor=north]{$\mathbf{5}$};
    \end{tikzpicture}
    \caption{An example of a discrete Morse function.  \textcolor{black}{The critical values are underlined and colored red.}}
    \label{fig:dmf_example}
\end{figure}

\subsection{Gradient vector fields}

 \textcolor{black}{
For a CW complex $X$, we write $x^{(k)} \in X$ when $x$ is a $k$--dimensional cell in $X$.
\begin{definition}
    Let $f \colon X \to \R$ be a discrete Morse function, let $x \in X^{(k)}$ and $y \in X^{(k+1)}$. Then $\{x, y\}$ is called a \emph{regular pair} if $x \subseteq y$ and $f(x) \ge f(y)$.
\end{definition}
In discrete Morse theory, the essential information in a discrete Morse function is what its regular pairs are. Therefore, instead of talking about a discrete Morse function $f$, we often talk about its induced \emph{gradient vector field} $V_f$.}
\begin{definition}
    Let $f \colon X \to \R$ be a discrete Morse function. The \emph{induced gradient vector field} of $f$, denoted $V_f$, is defined as the set of \emph{regular pairs}, i.e.,
	 \textcolor{black}{\begin{equation*}
        V_f = \{\{x^{(k)}, y^{(k+1)}\} : k \in \Z_{\ge 0}, x \subseteq y, f(x) \ge f(y)\}.
    \end{equation*}}
\end{definition}
An example of a gradient vector field is given in \autoref{fig:gradient_vec_field}. Two discrete Morse functions that induce the same gradient vector field are said to be \emph{Forman equivalent}.

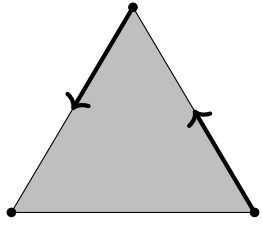
\begin{figure}
    \centering
    \begin{tikzpicture}[scale=.8]
        \filldraw[lightgray] (0,0) -- (4,0) -- (2, 3.4) -- cycle;
        \draw (2, 1.4);
        
        \draw (0, 0) -- (4, 0);
        \draw (0, 0) -- (2, 3.4);
        \draw (4, 0) -- (2, 3.4);
        
        \filldraw (0, 0) circle (2 pt);
        \filldraw (2, 3.4) circle (2 pt);
        \filldraw (4, 0) circle (2 pt);
        
        \draw[ultra thick, ->] (2, 3.4) -- (1, 1.7);
        \draw[ultra thick, ->] (4, 0) -- (3, 1.7);
    \end{tikzpicture}
    \caption{The gradient vector field for the discrete Morse function in \autoref{fig:dmf_example}.}
    \label{fig:gradient_vec_field}
\end{figure}

Given a regular CW complex $X$, a \emph{discrete vector field} on $X$ is a set $V$ of mutually disjoint pairs $\{x, y\}$ such that $\dim y = \dim x + 1$. Given a discrete vector field $V$, a \emph{$V$--path} is a sequence of simplices $(x_0, y_0, x_1, \dots, y_m, x_{m+1})$  \textcolor{black}{such that $x_i^{(k)} \in X, y_i^{(k+1)} \in X$,} $x_{i+1} \subseteq y_i$, $\{x_i, y_i\} \in V$ and $x_{i+1} \neq x_i$.
Discrete vector fields and gradient vector fields are related in the following way.
\begin{theorem}{\cite[Theorem 9.3]{Forman1998}}
    A discrete vector field $V$ is a gradient vector field of some discrete Morse function if and only if there are no nontrivial $V$--paths that start and end on the same cell.
\end{theorem}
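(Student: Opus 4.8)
The plan is to prove the two implications separately, since necessity is a short direct computation while sufficiency requires a construction. First I would prove necessity. Suppose $V = V_f$ for a discrete Morse function $f$, and let $(x_0, y_0, \dots, y_m, x_{m+1})$ be a $V$-path, so all $x_i$ are $k$-cells and all $y_i$ are $(k+1)$-cells. For each $i$, the regular pair $\{x_i, y_i\} \in V$ gives $f(x_i) \ge f(y_i)$. Now $y_i$ is a $(k+1)$-cell possessing the face $x_i$ with $f(x_i) \ge f(y_i)$, so condition (2) of the Morse function applied to $y_i$ forces every \emph{other} $k$-face of $y_i$ to have strictly smaller value; since $x_{i+1} \subseteq y_i$ and $x_{i+1} \neq x_i$, this yields $f(y_i) > f(x_{i+1})$. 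Chaining the inequalities gives $f(x_0) > f(x_1) > \dots > f(x_{m+1})$, so $x_0 \neq x_{m+1}$ whenever the path has at least one step. Hence no nontrivial $V$-path is closed.

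For sufficiency I would argue through a construction, first reducing the hypothesis to an acyclicity statement. Encode $V$ in the \emph{modified Hasse diagram} $\mathcal{H}_V$: take one vertex per cell of $X$, and for each incident pair $x \subsetneq y$ of adjacent dimensions draw a directed edge $x \to y$ if $\{x,y\} \in V$ and $y \to x$ otherwise. The key structural observation is that, because the pairs of $V$ are mutually disjoint, the top cell of a $V$-pair can never be the bottom cell of a $V$-pair, so from any vertex reached by an upward ($V$-)edge there is no outgoing upward edge; thus in any directed path every up-step is immediately followed by a down-step. Since a directed cycle has equally many up-steps and down-steps (net dimension change zero), a counting argument then shows the cycle consists entirely of alternating up-down ``bumps'' $x_i \to y_i \to x_{i+1}$ with $\{x_i,y_i\} \in V$, all sharing a common base dimension. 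In other words, a directed cycle in $\mathcal{H}_V$ is precisely a nontrivial closed $V$-path, so the hypothesis is equivalent to $\mathcal{H}_V$ being acyclic.

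Assuming $\mathcal{H}_V$ acyclic (and $X$ finite, so that longest paths exist), I would define $f(c)$ to be the length of the longest directed path in $\mathcal{H}_V$ starting at $c$. Prepending an edge shows $f$ strictly decreases along every directed edge, so $f(x) > f(y)$ for each $V$-pair $\{x,y\}$ and $f(y) > f(x)$ for each non-$V$ incident pair. I would then check the two Morse conditions directly: for a $k$-cell $x$, the $(k+1)$-cells $y \supseteq x$ with $f(x) \ge f(y)$ are exactly the upward $V$-partners of $x$, of which there is at most one by disjointness, giving condition (1); the symmetric argument on downward faces gives condition (2). Reading off the regular pairs, a pair is regular for $f$ exactly when it is a $V$-pair, so $V = V_f$, completing the construction.

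The hard part will be the structural lemma identifying directed cycles in $\mathcal{H}_V$ with closed $V$-paths. The definition of a $V$-path confines all the $x_i$ to a single dimension, whereas an a priori directed cycle could wander through several dimensions, so the delicate point is showing that disjointness of $V$ forces the alternating, dimension-confined form; I expect this to be where the real content lies. By contrast, once acyclicity is available the definition of $f$ and the verification of the Morse conditions are routine, the only caveat being that one should restrict to finite complexes (or otherwise guarantee the longest-path function is well defined).
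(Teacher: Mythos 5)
This statement is quoted in the paper with a citation to Forman's Theorem 9.3; the paper itself gives no proof, so there is nothing internal to compare against. Judged on its own, your proof is correct and complete in outline. The necessity direction is the standard chain-of-inequalities argument: condition (2) applied to $y_i$ turns $f(x_i) \ge f(y_i)$ into $f(y_i) > f(x_{i+1})$ for the \emph{other} face $x_{i+1} \neq x_i$, so $f$ strictly decreases along any $V$--path. Your sufficiency direction is the now-standard reformulation (due essentially to Chari) of gradient vector fields as acyclic matchings on the modified Hasse diagram, followed by the longest-path construction of $f$; this is a legitimate and arguably cleaner route than Forman's original argument. The structural lemma you flag as the crux does go through exactly as you sketch: in a directed cycle, no up-edge can be followed by an up-edge (disjointness of the pairs in $V$), so pairing each up-edge with the down-edge immediately following it is injective; since a cycle has equally many up- and down-edges, this pairing is a bijection, forcing perfect alternation, a fixed base dimension, and the condition $x_{i+1} \neq x_i$ (the down-edge $y_i \to x_{i+1}$ exists only for non-$V$ incidences, and $y_i$ is already matched to $x_i$). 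Two small points of care: your construction gives strict inequality $f(x) > f(y)$ on $V$-pairs, which is fine since regular pairs only require $f(x) \ge f(y)$; and the finiteness caveat you raise is genuinely needed (Forman's theorem is stated for finite complexes, and the longest-path function requires it), so it is right to make it explicit rather than a throwaway remark. Note also that your $f$ need not satisfy $f(x) < f(y)$ for faces of codimension at least two, but this does not affect the conclusion, since $V_f$ is by definition the set of \emph{regular pairs}, which live in adjacent dimensions.
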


\subsection{Simplicial collapse}\label{sec:simplicial_collapse}

In this section, we restrict our attention to simplicial complexes, and define what is known as a \emph{simplicial collapse}.
A \emph{free face} in a simplicial complex is a $k$--simplex that is the face of exactly one $(k+1)$--simplex.  \textcolor{black}{If $x^{(k)} \in X$ is a free face,} and $y$ is its $(k+1)$--dimensional coface, then $\{x, y\}$ is called a \emph{free pair}.
Removing a free pair from a simplicial complex is called an \emph{elementary collapse}. 
A sequence of elementary collapses is called a \emph{compound collapse} or \emph{simplicial collapse}. If the simplicial complex $Y$ can be produced from a simplicial complex $X$ through a compound collapse, we say that $X$ \emph{collapses} to $Y$ and write $X \searrow Y$.
Of particular importance is the fact that elementary collapses, and as a consequence compound collapses, yield a deformation retract. That is, if $X \searrow Y$, then $X$ deformation retracts to $Y$.

An example of an elementary collapse is given in \autoref{fig:elementary_collapse}.
\begin{figure}[htbp]
    \centering
    \begin{tikzpicture}[scale=.7]
        \filldraw[lightgray] (0,0) -- (4,0) -- (2, 3.4) -- cycle;
        
        \draw (0, 0) -- (4, 0);
        \draw (0, 0) -- (2, 3.4);
        \draw (4, 0) -- (2, 3.4);
        
        \filldraw (0, 0) circle (2 pt);
        \filldraw (2, 3.4) circle (2 pt);
        \filldraw (4, 0) circle (2 pt);
        
        \draw[->, very thick] (1.5, 2.6) -- (2, 2.3);
        \draw[->, very thick] (1, 1.7) -- (2, 1.1);
        \draw[->, very thick] (.5, .9) -- (1, 0.6);
        
        \draw[very thick, ->, double] (5.5, 1.7) -- (6.5, 1.7);
        
        \draw (8, 0) -- (12, 0);
        \draw (12, 0) -- (10, 3.4);
        
        \filldraw (8, 0) circle (2 pt);
        \filldraw (10, 3.4) circle (2 pt);
        \filldraw (12, 0) circle (2 pt);
    \end{tikzpicture}
    \caption{An elementary collapse.}
    \label{fig:elementary_collapse}
\end{figure}
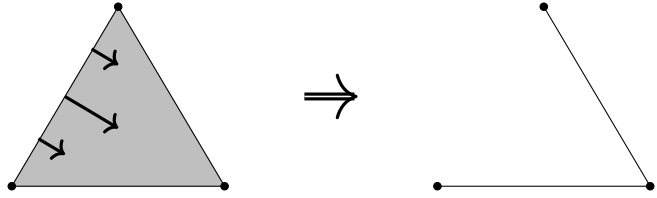

%
%

\section{p-categories}

 \textcolor{black}{
A \emph{p-category} is a category enriched in the category of posets.
In other words, a p-category is a category where the Hom sets have a poset structure satisfying the following property. For $f, f' \in \Hom(a, b)$ and $g, g' \in  \Hom(b, c)$ such that $f \Rightarrow f'$ and $g \Rightarrow g'$, we have that $g \circ f \Rightarrow g' \circ f'$.
Here and throughout the paper we use the symbol $\Rightarrow$ to denote a partial order relation between two morphisms.
}

A \emph{strict p-functor} $F \colon C \to D$ maps objects in $C$ to objects in $D$, and maps each Hom poset $\Hom_C(x,y)$ monotonically to $\Hom_D(F(x),F(y))$, such that
\begin{enumerate}
    \item $F(\id_x) = \id_{F(x)}$ for all objects $x$ in $C$, and
    \item $F(f \circ g) = F(f) \circ F(g)$ for all composable $f$ and $g$.
\end{enumerate}

A p-category can be viewed as a special case of a strict 2--category, where the 2--morphisms are partial order relations between morphisms. One can easily verify that this satisfies all the axioms of a strict 2--category. In particular, for a p-category viewed as a 2--category, there can only be zero or one 2--morphisms between two 1--morphism (either there is a partial order relation, or there is not).

 \textcolor{black}{
There are several ways to define the classifying space of a p-category (and more generally, a 2--category), but the different classifying spaces of a given p-category are all homotopy equivalent (\cite{Bullejos}, \cite{Carrasco}).}
In the rest of this section, we define the \emph{double nerve} of a p-category and let its geometric realization define the classifying space of a p-category.

\subsection{Simplicial categories}

A simplicial category is a simplicial object in $\Cat$, i.e., a functor\begin{equation*}
    \Delta^{\op} \to \Cat.
\end{equation*}

Composing such a functor $F \colon \Delta^{\op} \to \Cat$ with the nerve functor $\N \colon \Cat \to \sSet$ gives rise to a bisimplicial set $(\N \circ F)$. You can further take the diagonal of this bisimplicial set to get the simplicial set $\diag (\N \circ F)$.

\begin{example}\label{ex:simpl_cat_nerve}
Consider the following simplicial category $\s$.
\begin{itemize}
    \item $\s[0]$ is a category with two objects, $a$ and $b$, and two non-identity morphisms, $f \colon a \to b$ and $g \colon a \to b$.
    \item $\s[1]$ consists of degeneracies of $\s[0]$ and a single non-degenerate morphism $F \colon s_0 a \to s_0 b$, with $d_1 F = f$ and $d_0 F = g$.
    \item For $n \ge 2$, all objects and morphisms in $\s[n]$ are degenerate.
\end{itemize}

\autoref{fig:nerve_simpl_cat} illustrates the simplicial set $\diag(\N \circ \s)$ (omitting unimportant degenerate simplices). Observe that the geometric realization is a suspension of $|\Delta^1|$, i.e., homeomorphic to a disk $D^2$.

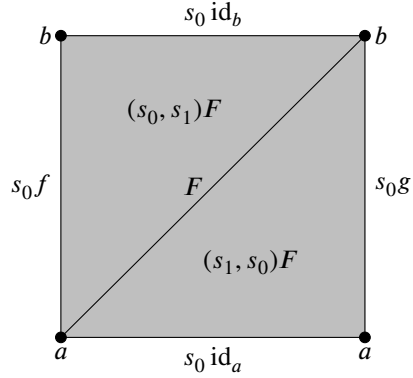
\begin{figure}[htbp]
    \centering
    \begin{tikzpicture}
        \filldraw[lightgray] (0,0) -- (4,0) -- (4, 4) -- (0, 4) -- cycle;
        
        \filldraw (0,0) circle (2 pt) node[anchor=north] {$a$};
        \filldraw (4,0) circle (2 pt) node[anchor=north] {$a$};
        \filldraw (0,4) circle (2 pt) node[anchor=east] {$b$};
        \filldraw (4,4) circle (2 pt) node[anchor=west] {$b$};

        \draw (0,0) -- (4,0) node[midway, anchor=north]{$s_0 \id_a$};
        \draw (0,4) -- (4,4) node[midway, anchor=south]{$s_0 \id_b$};
        \draw (0,0) -- (0,4) node[midway, anchor=east]{$s_0 f$};
        \draw (4,0) -- (4,4) node[midway, anchor=west]{$s_0 g$};
        \draw (0,0) -- (4,4) node[midway, anchor=east]{$F$};

        \draw (2.5, 1) node {$(s_1, s_0) F$};
        \draw (1.5, 3) node {$(s_0, s_1) F$};
    \end{tikzpicture}
    \caption{The simplicial set $\diag(\N \circ \s)$. Note that the top and bottom edges are degenerate, as $s_0 \id_a = (s_0, s_0) a$, and similar for $s_0 \id_b$. The left and right edges are, however, not degenerate as 1--simplices in $\diag(\N \circ \s)$.}
    \label{fig:nerve_simpl_cat}
\end{figure}

\end{example}

\subsection{The double nerve of a p-category}
\label{sec:double_nerve}
Given a p-category $C$, one can construct a simplicial category $\NN C$ as follows:

\begin{itemize}
    \item The objects in $(\NN C)[0]$ are the objects in $C$.
    
    \item For $n \ge 1$, the objects in $(\NN C)[n]$ are degeneracies of the objects in $(\NN C)[0]$.
    
    \item The morphisms $a \to b$ in $(\NN C)[0]$ are the morphisms $a \to b$ in $C$.
    
    \item For $n \ge 1$, the morphisms $(s_0)^n a \to (s_0)^n b$ in $(\NN C)[n]$ are $n$--simplices in $\N \left( \Hom_C (a, b) \right)$ (where we consider the poset $\Hom_C (a, b)$ as an ordinary category and take its nerve).
    
    In other words, morphisms in $\Hom_{(\NN C)[n]}\left( (s_0)^n a, (s_0)^n b\right)$ are sets of morphisms $\{f_0, \dots, f_n\}$ in $\Hom_C(a,b)$ such that 
    \begin{equation*}
        f_0 \Rightarrow f_1 \Rightarrow \dots \Rightarrow f_n
    \end{equation*}
    (here $(s_0)^n$ means $s_0$ applied $n$ times).

    The face and degeneracy maps are as in $\N\left( \Hom_C(a, b) \right)$.
    
    \item The composition of $\{f_0, \dots, f_n\} \colon (s_0)^n a \to (s_0)^n b$ with $\{g_0, \dots, g_n\} \colon (s_0)^n b \to (s_0)^n c$ is $\{g_0 \circ f_0, \dots, g_n \circ f_n\}$.
\end{itemize}

It's easy to verify that $g_0 \circ f_0 \Rightarrow g_1 \circ f_1 \Rightarrow \dots \Rightarrow g_n \circ f_n$, so that the composition rule is well-defined. It's also easily verified that this composition rule is associative, and that $\{\id_x, \dots, \id_x\}$ acts as the identity on $(s_0)^n x$. To show that $\NN C$ is a well-defined simplicial category, it remains to show that the face maps and degeneracy maps are functors between small categories. It's clear that the face map $d_i \colon (\NN C)[n] \to (\NN C)[n-1]$ sends a morphisms $\{f_0, \dots, f_n\} \in \Hom_{(\NN C)([n])}\left((s_0)^n a, (s_0)^n b\right)$ to $\Hom_{(\NN C)([n-1])}\left(d_i (s_0)^n a, d_i (s_0)^n b\right)$, as $d_i (s_0)^n = (s_0)^{n-1}$. Furthermore, $d_i$ commutes with composition:
\begin{align*}
    d_i & \left( \{g_0, \dots, g_n\} \circ \{f_0, \dots, f_n\} \right) \\
    & = d_i \{g_0 \circ f_0, \dots, g_n \circ f_n \} \\
    & = \{g_0 \circ f_0, \dots, g_{i-1} \circ f_{i-1}, g_{i+1} \circ f_{i+1}, \dots, g_n \circ f_n \} \\
    & = \{g_0, \dots, g_{i-1}, g_{i+1}, \dots, g_n \} \circ \{f_0, \dots, f_{i-1}, f_{i+1}, \dots, f_n \} \\
    & = \left( d_i \{g_0, \dots, g_n\}\right) \circ \left( d_i \{f_0, \dots, f_n\}\right)
\end{align*}
Hence, the face maps are functors. Similar computations for $s_i$ gives that the degeneracy maps are functors. In conclusion, $\NN C$ is a well-defined simplicial category.

\begin{example}
Let's consider the p-category $\p$ with two objects, $a$ and $b$, two non-identity morphisms, $f \colon a \to b$ and $g \colon a \to b$, and a single non-identity partial order relation $f \Rightarrow g$ (see also \autoref{fig:simple_p_cat} for an illustration).

\begin{figure}[htbp]
    \centering
    \begin{tikzpicture}[scale=.6]
        \draw[thick, ->] (-1.5, 0.5) .. controls (-0.5, 1.0) and (0.5, 1.0) .. (1.5, 0.5) node[midway, anchor=south]{$f$};
        \draw[thick, ->] (-1.5, -0.5) .. controls (-0.5, -1.0) and (0.5, -1.0) .. (1.5, -0.5) node[midway, anchor=north]{$g$};
        
        \filldraw (-2, 0) circle (2 pt) node[anchor=north]{$a$};
        \filldraw (2, 0) circle (2 pt) node[anchor=north]{$b$};

        \draw (0, 0) node{$\Downarrow$};
    \end{tikzpicture}
    \caption{The p-category $\p$.}
    \label{fig:simple_p_cat}
\end{figure}
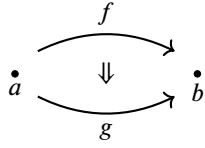

Let $\NN \p$ be the corresponding simplicial category as described above. Then the objects in $(\NN \p)[0]$ are $a$ and $b$, while the morphisms in $(\NN \p)[0]$ are $f$ and $g$, together with identities. There is a single non-degenerate morphism in $(\NN \p)[1]$, namely $F := \{f, g\}$, which has the faces $d_1 F = f$ and $d_0 F = g$. All other objects and morphisms are degenerate. Thus, $\NN \p$ is precisely the simplicial category $\s$ from \autoref{ex:simpl_cat_nerve}.
\end{example}

As mentioned in the previous section, we can  \textcolor{black}{compose} the simplicial category $\NN C$ with the nerve operation $\N$ to get a bisimplicial set. We will call this composite operation the \emph{double nerve}.

\begin{definition}
    Let $C$ be a p-category. The \emph{double nerve} of $C$, denoted $\DN C$, is the bisimplicial set $\N \circ (\NN C)$. The \emph{classifying space} of $C$ is the geometric realization of the double nerve, i.e., $|\DN C|$.
\end{definition}

 \textcolor{black}{
Note that in \cite{Bullejos} and \cite{Carrasco}, the intermediate simplicial category $\NN C$ ($\underline{\text{N}} C$ in their papers) is defined differently from here; there, the $n$--simplices are horizontal compositions, not vertical compositions. 
This does not matter for our purposes however, as we only work with the double nerve. 
The double nerve is the same in both definitions, except that the indices in the bisimplicial set are swapped.
}





\section{The discrete flow category}\label{sec:discrete_flow_category}

In this section we present the definition of the discrete flow category, as defined by Nanda in \cite{Nanda}. Note that we only define the discrete flow category of a discrete Morse function on a regular CW complex. For the general discrete flow category, see \cite{Nanda}.

\subsection{The entrance path category}\label{sec:ent}
We now describe an example of a p-category, the \emph{entrance path category} of a regular CW complex. It is similar to the face poset, but has more structure: it includes data on \emph{how} a cell is a face of another cell.

\begin{definition}
    Let $X$ be a regular CW complex. The \emph{entrance path category} $\Ent[X]$ of $X$ is a p-category given by the following.
    \begin{enumerate}
        \item The objects are the cells in $X$.
        \item The morphisms $x \to y$ are strictly descending sequences of cells $(x = x_0 > x_1 > \dots > x_k = y)$, with the understanding that $\id_x$ is the sequence $(x)$ with a single element.
        \item The partial order is defined so that $f \Rightarrow f'$ if and only if $f$ is a (not necessarily contiguous) subsequence of $f'$.
        \item Composition of morphisms are given by concatenating sequences as follows.
        \begin{equation*}
            \left(z > x_1 > \dots > x_k\right) \circ \left(y_0 > \dots > y_{l-1} > z\right)
            = \left(y_0 > \dots > y_{l-1} > z > x_1 > \dots x_k\right)
        \end{equation*}
    \end{enumerate}
\end{definition}

 \textcolor{black}{
A theorem, proved in \cite[Proposition 3.3]{Nanda}, states the following.
}

\begin{theorem}
    Let $X$ be a finite regular CW complex. Then $X$ is homotopy equivalent to $|\N(\Ent[X])|$.
\end{theorem}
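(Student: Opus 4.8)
The plan is to reduce the statement to the homeomorphism $|T(X)| \cong X$ quoted above, by comparing $\Ent[X]$ with the face poset of $X$; here I read $|\N(\Ent[X])|$ as the classifying space of the p-category, i.e.\ the realization $|\DN(\Ent[X])|$ of its double nerve, since the poset structure on the Hom sets is exactly what makes the two-term collapse below work. The crucial structural observation is that every Hom poset of $\Ent[X]$ has an initial object. Indeed, $\Hom_{\Ent[X]}(x,y)$ is nonempty precisely when $y$ is a face of $x$, and in that case every descending sequence $(x = x_0 > \dots > x_k = y)$ contains the two-term sequence $(x > y)$ as a subsequence; hence $(x > y) \Rightarrow f$ for all such $f$, so $(x > y)$ is the unique minimum of $\Hom_{\Ent[X]}(x,y)$ (and $\id_x$ is the minimum when $x = y$). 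Since a poset with a minimum element has contractible nerve, $|\N(\Hom_{\Ent[X]}(x,y))|$ is contractible whenever it is nonempty. This is the input that makes the p-categorical nerve collapse onto something combinatorially trivial.

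Next I would introduce the face poset $P$ of $X$, with $a \ge b$ whenever $b$ is a face of $a$, regarded as a p-category whose Hom posets are singletons, together with the p-functor $\pi \colon \Ent[X] \to P$ that is the identity on cells and sends each descending sequence $(x_0 > \dots > x_k)$ to the unique arrow $x_0 \to x_k$. Passing to double nerves gives a map of bisimplicial sets $\DN(\Ent[X]) \to \DN(P)$, and I would realize both first in the inner (Hom-poset) direction. For a fixed outer degree $q$, a $q$-string of composable arrows is a chain of cells $a_0 \ge \dots \ge a_q$, and the inner simplicial set attached to it is the product $\prod_{i} \N(\Hom_{\Ent[X]}(a_{i-1}, a_i))$; by the previous paragraph each factor realizes to a contractible space, whereas the corresponding factor for $P$ is a point. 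Hence, after inner realization, $\pi$ induces a homotopy equivalence in each outer degree $q$. Because $P$ has discrete Hom posets, $\DN(P)$ is constant in the inner direction and its diagonal is simply the ordinary nerve $\N(P)$.

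Finally I would invoke the realization lemma for bisimplicial sets: a map inducing a degreewise homotopy equivalence of the associated simplicial spaces induces a homotopy equivalence of diagonals, which (recalling that the paper defines $|{-}|$ of a bisimplicial set as the realization of the diagonal) yields $|\DN(\Ent[X])| \simeq |\N(P)|$. The nondegenerate simplices of $\N(P)$ are exactly the strict chains of cells under the face relation, i.e.\ the simplices of the barycentric subdivision, so $|\N(P)| \cong |T(X)|$, and the theorem quoted earlier gives $|T(X)| \cong X$; composing these identifications proves $|\N(\Ent[X])| \simeq X$. I expect the main obstacle to be this last realization step: one must check that the degreewise equivalences assemble into an equivalence of total spaces, which needs a properness/cofibrancy check on the simplicial spaces (they are degreewise realizations of simplicial sets, hence CW) and some care with degenerate object-strings, where the relevant product of contractible factors — including the empty product in degree $0$ — is still contractible. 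The secondary bookkeeping point is to verify that the inner realization identifies with the product $\prod_i \N(\Hom_{\Ent[X]}(a_{i-1},a_i))$ compatibly with the outer face and degeneracy maps.
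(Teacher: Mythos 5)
Your proof is correct. Note first that the paper itself contains no argument for this statement---it is quoted directly from \cite{Nanda}---so the only possible comparison is with that cited proof, and your route is, in outline, the natural one (and essentially the cited one): compare $\Ent[X]$ with the face poset via the projection p-functor, using that every nonempty Hom poset has a minimum. Two specific points in your write-up deserve emphasis. Your decision to read $|\N(\Ent[X])|$ as the classifying space $|\DN \Ent[X]|$ is not merely a convenient convention but a forced one: if $\N$ meant the nerve of the underlying ordinary category, the statement would be \emph{false}, since $\Ent[X]$ with its poset structure forgotten is a unique factorization category (\ref{ex:unique_factorization_cats}), so by \autoref{theoremC} and \autoref{cor:ufc_homology} that nerve has vanishing homology in degrees $\ge 2$, which already fails for $X = S^2$. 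With that reading fixed, your key observations all check out: the atom $(x>y)$ is a subsequence of every entrance path from $x$ to $y$, hence a minimum of $\Hom_{\Ent[X]}(x,y)$, so each such Hom poset has contractible nerve; for fixed outer degree $q$ one has $(\DN C)_{p,q} = \coprod \prod_{i} \N\bigl(\Hom_C(a_{i-1},a_i)\bigr)_p$, the coproduct running over chains of cells, so the projection to the face poset is a weak equivalence in each outer degree; the diagonal of the double nerve of the face poset is its ordinary nerve, whose realization is $|T(X)| \cong X$.

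The one step you flag as ``the main obstacle'' is in fact not an obstacle at all, and this is worth knowing: for bisimplicial \emph{sets}, a map that is a levelwise weak equivalence in either index induces a weak equivalence on diagonals with no properness, cofibrancy, or goodness hypotheses whatsoever (this is the standard diagonal lemma, \cite[Chapter IV]{GoerssJardine}). Since both diagonals realize to CW complexes, Whitehead's theorem then upgrades the weak equivalence to the desired homotopy equivalence. So you can bypass simplicial spaces and their realization machinery entirely; the degenerate object-strings and the empty product in degree $0$ also need no special care, as the component-by-component argument (contractible space mapping to a point, bijectively on components) covers them uniformly.
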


\subsection{Localization}
Given a category $C$ and a collection of morphisms $Q$ that contains all identities and is closed under composition, one can construct a new category $C[Q^{-1}]$, called the \emph{localization} of $C$ at $Q$. This category is the minimal category containing $C$ where all morphisms in $Q$ have inverses. The localization comes with a functor $L \colon C \to C[Q^{-1}]$ that sends objects in $C$ to their copies in $C[Q^{-1}]$ and morphisms in $C$ to their equivalence classes in $C[Q^{-1}]$.

We now describe how to localize a p-category at a special class of morphisms.

\begin{definition}
    A morphism $f \colon x \to y$ in a p-category $C$ is called an \emph{atom} if
    \begin{enumerate}
        \item $f \Rightarrow f'$ holds for any $f' \in C(x, y)$,
        \item $x = y$ implies $f = \id_x$, and
        \item Solutions to $h \circ g \Rightarrow f$ for morphisms $g \colon x \to z$ and $h \colon z \to y$ only exist when \begin{itemize}
            \item $z = x$, in which case $(g, h) = (\id_x, f)$, or
            \item $z = y$, in which case $(g, h) = (f, \id_y)$.
        \end{itemize}
    \end{enumerate}
\end{definition}

As an example, in the entrance path category of a regular CW complex, the atoms are precisely the sequences of length 2, i.e., those on the form $(x > y)$, together with the identities.  \textcolor{black}{Hence, each non-identity morphism in the entrance path category has a unique decomposition into atoms. The atom decomposition of $(x_0 > x_1 > \dots > x_n)$ is simply $(x_n > x_{n-1}) \circ (x_{n-1} \circ x_{n-2}) \circ \cdots \circ (x_1 > x_0)$.}

\begin{definition}
    A collection $\Sigma$ of morphisms in a p-category is called \emph{directed} if
    \begin{enumerate}
        \item all morphisms in $\Sigma$ are atoms,
        \item if $f \colon x \to y$ is in $\Sigma$, then $x \ne y$, and
        \item if $f \colon x \to y$ is in $\Sigma$, then there are no morphisms $y \to x$ in $\Sigma$.
    \end{enumerate}
\end{definition}

 \textcolor{black}{We again give an example in the entrance path category $\Ent[X]$ of a regular CW complex $X$.} Let $f \colon X \to \R$ be a discrete Morse function. The gradient vector field $V_f = \{(x_i, y_i)\}$ gives a collection $\Sigma = \{(y_i > x_i)\}$ of morphisms in $\Ent[X]$. It's easily verified that $\Sigma$ is directed.

Now, given a p-category $C$ and a directed collection $\Sigma$ of morphisms in $C$ so that the union $\Sigma^{+}$ with all identities is closed under composition, we can define the \emph{localization of $C$ at $\Sigma$}. We write this as $\Loc_{\Sigma} C$, and define it as follows.
\begin{definition}
    Let $C$, $\Sigma$ and $\Sigma^{+}$ be as above. Then $\Loc_{\Sigma} C$ is a p-category given by the following.
    \begin{itemize}
        \item The objects are the same as the objects in $C$.
        \item The morphisms $w \to z$ are equivalence classes of zigzags of the form
        \begin{center}
        \begin{tikzpicture}
            \diagram{d}{3em}{3em}{
                w & y_0 & x_0 & y_1 & \cdots & x_k & z \\
            };
            
            \path[->,font = \scriptsize, midway]
            (d-1-1) edge node[above]{$g_0$} (d-1-2)
            (d-1-3) edge node[above]{$f_0$} (d-1-2)
            (d-1-3) edge node[above]{$g_1$} (d-1-4)
            (d-1-5) edge node[above]{$f_1$} (d-1-4)
            (d-1-6) edge node[above]{$f_k$} (d-1-5)
            (d-1-6) edge node[above]{$g_{k+1}$} (d-1-7);
        \end{tikzpicture}
        \end{center}
        where the $f_i$'s are in $\Sigma^{+}$, and the $g_i$'s are arbitrary, and the equivalence relation is generated by the following relations. Two zigzags are related \emph{horizontally} if they differ by intermediate identity maps, and \emph{vertically} if they form the rows of a commutative diagram of the form
        \begin{center}
        \begin{tikzpicture}
            \diagram{d}{3em}{3em}{
                w & y_0 & x_0 & y_1 & \cdots & x_k & z \\
                w & y_0' & x_0' & y_1' & \cdots & x_k' & z \\
            };
            
            \path[->,font = \scriptsize, midway]
            (d-1-1) edge node[above]{$g_0$} (d-1-2)
            (d-1-3) edge node[above]{$f_0$} (d-1-2)
            (d-1-3) edge node[above]{$g_1$} (d-1-4)
            (d-1-5) edge node[above]{$f_1$} (d-1-4)
            (d-1-6) edge node[above]{$f_k$} (d-1-5)
            (d-1-6) edge node[above]{$g_{k+1}$} (d-1-7);

            \path[->,font = \scriptsize, midway]
            (d-2-1) edge node[above]{$g_0'$} (d-2-2)
            (d-2-3) edge node[above]{$f_0'$} (d-2-2)
            (d-2-3) edge node[above]{$g_1'$} (d-2-4)
            (d-2-5) edge node[above]{$f_1'$} (d-2-4)
            (d-2-6) edge node[above]{$f_k'$} (d-2-5)
            (d-2-6) edge node[above]{$g_{k+1}'$} (d-2-7);

            \path[->,font = \scriptsize, midway]
            (d-1-1) edge node[left]{$\id_w$} (d-2-1)
            (d-1-2) edge node[left]{$u_0$} (d-2-2)
            (d-1-3) edge node[left]{$v_0$} (d-2-3)
            (d-1-4) edge node[left]{$u_1$} (d-2-4)
            (d-1-6) edge node[left]{$v_k$} (d-2-6)
            (d-1-7) edge node[left]{$\id_z$} (d-2-7);
        \end{tikzpicture}
        \end{center}
        where the $u_i$'s and $v_i$'s are in $\Sigma^{+}$.
        
        \item The partial order is defined so that $\gamma' \Rightarrow \gamma$ if and only if there exist representatives of $\gamma$ and $\gamma'$ that fit into the top and bottom row, respectively, of a (not necessarily commutative) diagram of the form
        \begin{center}
        \begin{tikzpicture}
            \diagram{d}{3em}{3em}{
                w & y_0 & x_0 & y_1 & \cdots & x_k & z \\
                w & y_0' & x_0' & y_1' & \cdots & x_k' & z \\
            };
            
            \path[->,font = \scriptsize, midway]
            (d-1-1) edge node[above]{$g_0$} (d-1-2)
            (d-1-3) edge node[above]{$f_0$} (d-1-2)
            (d-1-3) edge node[above]{$g_1$} (d-1-4)
            (d-1-5) edge node[above]{$f_1$} (d-1-4)
            (d-1-6) edge node[above]{$f_k$} (d-1-5)
            (d-1-6) edge node[above]{$g_{k+1}$} (d-1-7);

            \path[->,font = \scriptsize, midway]
            (d-2-1) edge node[above]{$g_0'$} (d-2-2)
            (d-2-3) edge node[above]{$f_0'$} (d-2-2)
            (d-2-3) edge node[above]{$g_1'$} (d-2-4)
            (d-2-5) edge node[above]{$f_1'$} (d-2-4)
            (d-2-6) edge node[above]{$f_k'$} (d-2-5)
            (d-2-6) edge node[above]{$g_{k+1}'$} (d-2-7);

            \path[->,font = \scriptsize, midway]
            (d-1-1) edge node[left]{$\id_w$} (d-2-1)
            (d-1-2) edge node[left]{$u_0$} (d-2-2)
            (d-1-3) edge node[left]{$v_0$} (d-2-3)
            (d-1-4) edge node[left]{$u_1$} (d-2-4)
            (d-1-6) edge node[left]{$v_k$} (d-2-6)
            (d-1-7) edge node[left]{$\id_z$} (d-2-7);

            \draw[->, double] (d-2-1) -- (d-1-2);
            \draw[->, double] (d-2-3) -- (d-1-2);
            \draw[->, double] (d-2-3) -- (d-1-4);
            \draw[->, double] (d-2-5) -- (d-1-4);
            \draw[->, double] (d-2-6) -- (d-1-5);
            \draw[->, double] (d-2-6) -- (d-1-7);
        \end{tikzpicture}
        \end{center}
        where, again, $u_i$ and $v_i$ are in $\Sigma^{+}$.
        
        \item Composition of morphisms are given by concatenating representatives as follows
        \begin{align*}
            & \left[w \xrightarrow{g_0} y_0 \xleftarrow{f_0} \dots \ \xleftarrow{f_k} x_k \xrightarrow{g_{k+1}} z\right]
            \circ \left[v \xrightarrow{g_0'} y_0' \xleftarrow{f_0'} \dots \ \xleftarrow{f_k'} x_k' \xrightarrow{g_{k+1}'} w\right] \\
            & \quad = \left[v \xrightarrow{g_0'} y_0' \xleftarrow{f_0'} \dots \ \xleftarrow{f_k'} x_k' \xrightarrow{g_0 \circ g_{k+1}'} y_0 \xleftarrow{f_0} \dots \ \xleftarrow{f_k} x_k \xrightarrow{g_{k+1}} z\right].
        \end{align*}
    \end{itemize}
\end{definition}

One also gets a functor (or equivalently, a strict p-functor) in this case, which we write $L_{\Sigma} \colon C \to \Loc_{\Sigma} C$. The functor sends objects to themselves and morphisms to their respective equivalence classes.

As mentioned above, a gradient vector field $V_f = \{(x_i, y_i)\}$ gives a directed collection $\Sigma = \{(y_i > x_i)\}$ on the entrance path category. Thus, given a discrete Morse function $f \colon X \to \R$, we can localize $\Ent[X]$ at $\Sigma$, which will give us a category where the morphisms corresponding to regular pairs have inverses. We will call this $\Sigma$ the \emph{Morse system} induced by $f$. 

\subsection{The discrete flow category}\label{sec:flo}

We are now ready to define the discrete flow category of a discrete Morse function.

\begin{definition}
    Let $f \colon X \to \R$ be a discrete Morse function, and let $\Sigma$ be its induced Morse system. Let $\Loc_{\Sigma}[X]$ be the p-category localization of $\Ent[X]$ at $\Sigma$. The \emph{discrete flow category} of $f$, denoted $\Flo_{\Sigma}[X]$, is the full subcategory of $\Loc_{\Sigma}[X]$ generated by the critical cells of $f$.
\end{definition}

A special case of the main result of \cite{Nanda} states the following.

\begin{theorem}
    Let $f \colon X \to \R$ be a discrete Morse function, and let $\Sigma$ be its induced Morse system. Then the classifying space of the discrete flow category of $f$ is homotopy equivalent to $X$. 
\end{theorem}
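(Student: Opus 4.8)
The plan is to establish the chain of homotopy equivalences $X \simeq |\N(\Ent[X])| \simeq |\DN \Ent[X]|$ and then transport the rightmost space across the localization functor $L_\Sigma \colon \Ent[X] \to \Loc_\Sigma[X]$ and the full inclusion $\Flo_\Sigma[X] \hookrightarrow \Loc_\Sigma[X]$, showing that both induce homotopy equivalences on classifying spaces (double-nerve realizations). The first equivalence is the cited theorem. For the second, observe that every hom-poset $\Hom_{\Ent[X]}(x,y)$ has a minimum element: the length-two sequence $(x > y)$ is a (not necessarily contiguous) subsequence of every descending sequence from $x$ to $y$, hence satisfies $(x>y) \Rightarrow f$ for all $f$. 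A poset with a minimum has contractible nerve, so the vertical simplicial direction of the double nerve is levelwise contractible and the comparison map $|\N(\Ent[X])| \to |\DN \Ent[X]|$ is a homotopy equivalence. I emphasize that this shortcut is special to $\Ent[X]$: the hom-posets of the flow category have genuinely nontrivial homotopy type (this is essentially the content of \autoref{theoremA}), so the remaining two steps must be carried out for the double nerve directly rather than reduced to ordinary nerves.

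For the localization functor I would invoke the form of Quillen's Theorem A valid for $p$--categories viewed as $2$--categories: it suffices to show that for each object $z$ the appropriate comma $2$--category over $z$ has contractible classifying space. This is exactly where the hypotheses on the Morse system $\Sigma$ are used. Because $\Sigma$ is directed and, crucially, the underlying gradient vector field admits no nontrivial closed $V$--paths, the zigzags recording how objects map to $z$ are organised by the acyclic gradient flow; the flow endows each comma category with a well-founded, cone-like structure (an explicit contracting homotopy along gradient paths), forcing contractibility. Conceptually, inverting the atoms of $\Sigma$ is the categorical shadow of collapsing the regular pairs of the gradient vector field, and each such collapse is a deformation retract.

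For the inclusion of the flow category I would again apply the $2$--categorical Theorem A. Note that this inclusion is \emph{not} an equivalence of categories: a regular cell need not become isomorphic to a critical cell in $\Loc_\Sigma[X]$. Instead, after localization every morphism of $\Sigma$ is invertible, so following the (now reversible) gradient flow from an arbitrary object contracts the relevant comma category onto the critical cells at which the flow terminates. Verifying that these comma categories are contractible---again using the absence of closed $V$--paths to guarantee the flow is well-founded---shows that restricting to the full subcategory on critical cells preserves the homotopy type, completing the chain and yielding $|\DN \Flo_\Sigma[X]| \simeq X$.

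The main obstacle is the localization step. Localization of a category does not in general preserve the homotopy type of its classifying space, so the argument cannot be formal: it must extract genuine geometric content from $\Sigma$, namely its directedness together with the acyclicity coming from the absence of nontrivial closed $V$--paths, in order to prove the comma categories contractible. A secondary difficulty is bookkeeping: all three comparisons live in the world of $p$--categories and must be verified at the level of the double nerve, so the fibration-type input behind Theorem A has to be set up bisimplicially rather than simplicially, and compatibility of the diagonal realization with these homotopy equivalences must be checked.
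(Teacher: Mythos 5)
The paper itself contains no proof of this statement: it is quoted as a special case of the main result of \cite{Nanda}, so the only meaningful comparison is with Nanda's proof, whose broad architecture (invariance of the classifying space under localization at $\Sigma$ and under restriction to critical cells) your plan does mirror. The difficulty is that your plan contains a genuine error at the one place where it commits to a concrete argument, and placeholders at the places where the real work lies. The error is in your second step. Each hom-poset of $\Ent[X]$ does have the minimum $(x>y)$ and hence contractible nerve, but levelwise contractibility of the poset direction does \emph{not} make the comparison map $|\N(\Ent[X])| \to |\DN \Ent[X]|$ an equivalence. What it actually yields, by a levelwise argument on the bisimplicial set, is $|\DN \Ent[X]| \simeq |\N(\pi_0 \Ent[X])|$, where $\pi_0$ replaces each hom-poset by its set of connected components (composition descends because it is monotone). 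Since every nonempty hom-poset of $\Ent[X]$ has a minimum, $\pi_0\Ent[X]$ is the face poset $\Fac[X]$, and one gets $|\DN\Ent[X]| \simeq |\N(\Fac[X])| = |T(X)| \cong X$; this is the correct reading of the cited theorem, which concerns the classifying space of the p-category, not the nerve of its underlying $1$--category. The two are drastically different: by \autoref{ex:unique_factorization_cats} the underlying category of $\Ent[X]$ is a unique factorization category, so by \autoref{theoremC} its nerve deformation retracts onto a simplicial set with no nondegenerate simplices above dimension $1$; thus $|\N(\Ent[X])|$ has the homotopy type of a graph and cannot agree with $|\DN\Ent[X]| \simeq X$ unless $X$ does. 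The paper's own example $\p$ is a minimal counterexample to your claimed equivalence: $\Hom_{\p}(a,b)$ has a minimum element $f$, yet $|\DN \p| \cong D^2$ while $|\N(\p)| \cong S^1$.

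Second, your treatment of the localization functor $L_\Sigma \colon \Ent[X] \to \Loc_\Sigma[X]$ and of the inclusion $\Flo_\Sigma[X] \hookrightarrow \Loc_\Sigma[X]$ is not a proof but a description of what a proof would need to do. The contractibility of the relevant comma $2$--categories is asserted (``the flow endows each comma category with a well-founded, cone-like structure \dots forcing contractibility''), but no contracting homotopy is constructed, no well-founded induction is set up, and the compatibility of any such argument with the bisimplicial (double nerve) structure is deferred. You correctly identify that localization does not preserve classifying spaces in general and that directedness of $\Sigma$ plus acyclicity of the gradient vector field must enter; but supplying exactly these arguments is the technical content of \cite{Nanda}, and nothing in the proposal discharges it. As written, the proposal is a reasonable reconstruction of the shape of Nanda's proof together with an incorrect shortcut for the entrance path category, rather than a proof of the theorem.
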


\begin{example}
As an example, consider the discrete Morse function on $X \cong S^1$ illustrated in \autoref{fig:circle_dmf}.

\begin{figure}[htbp]
    \centering
    \begin{tikzpicture}[scale=.8]
        \draw (4, 0) -- (2, 3.4) node[midway, anchor=east]{$x$};
        \draw (4, 0) -- (6, 3.4) node[midway, anchor=west]{$y$};
        \draw (2, 3.4) -- (6, 3.4) node[midway, anchor=south]{$z$};
        
        \filldraw (2, 3.4) circle (2 pt) node[anchor=east]{$a$};
        \filldraw (4, 0) circle (2 pt) node[anchor=east]{$b$};
        \filldraw (6, 3.4) circle (2 pt) node[anchor=west]{$c$};
        
        \draw[ultra thick, ->] (2, 3.4) -- (4, 3.4);
        \draw[ultra thick, ->] (4, 0) -- (5, 1.7);
    \end{tikzpicture}
    \caption{A gradient vector field of a discrete Morse function $f \colon X \to \R$.}
    \label{fig:circle_dmf}
\end{figure}

The objects of $\Flo_{\Sigma}[X]$ are the critical cells of $f$, which are $x$ and $c$. There are two morphisms from $x$ to $c$, corresponding to the two zigzag paths $x > b < y > c$ and $x > a < z > c$. There are no partial order relation between these morphisms. The other Hom posets are $\Hom(x, x) = \{\id_x\}$, $\Hom(c, c) = \{\id_c\}$ and $\Hom(c, x) = \emptyset$. Thus, the classifying space $|\N(\Flo_{\Sigma}[X])|$ becomes $S^1$, as illustrated in \autoref{fig:circle_dflow_nerve}.

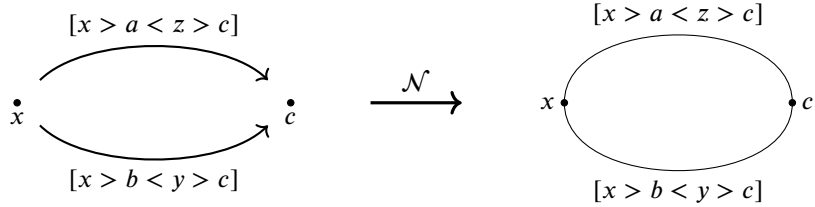
\begin{figure}[htbp]
    \centering
    \begin{tikzpicture}[scale=.6]
        \draw[thick, ->] (-1.5, 0.5) .. controls (-0.5, 1.5) and (2.5, 1.5) .. (3.5, 0.5) node[midway, anchor=south]{$[x > a < z > c]$};
        \draw[thick, ->] (-1.5, -0.5) .. controls (-0.5, -1.5) and (2.5, -1.5) .. (3.5, -0.5) node[midway, anchor=north]{$[x > b < y > c]$};
        
        \filldraw (-2, 0) circle (2 pt) node[anchor=north]{$x$};
        \filldraw (4, 0) circle (2 pt) node[anchor=north]{$c$};
        
        \draw[very thick, ->] (5.75, 0) -- (7.75, 0) node[midway, anchor=south]{$\N$};
        
        \draw (10, 0) .. controls (10, 2) and (15, 2) .. (15, 0) node[midway, anchor=south]{$[x > a < z > c]$};
        \draw (10, 0) .. controls (10, -2) and (15, -2) .. (15, 0) node[midway, anchor=north]{$[x > b < y > c]$};
        
        \filldraw (10, 0) circle (2 pt) node[anchor=east]{$x$};
        \filldraw (15, 0) circle (2 pt) node[anchor=west]{$c$};
    \end{tikzpicture}
    \caption{The discrete flow category and its nerve (the identity morphisms are omitted).}
    \label{fig:circle_dflow_nerve}
\end{figure}
\end{example}

\section{Hom posets of discrete flow categories}

In this section we study the structure of the Hom posets of the discrete flow category of a discrete Morse function.
We first develop some technical results on the Hom posets, and in particular show that the posets are \emph{graded}.
We then use these results to construct an algorithm to compute said posets.
Finally, we prove \autoref{theoremA}: for a discrete Morse function on a simplicial complex $X$, the opposite poset of each Hom poset in its discrete flow category is a \emph{CW poset}, i.e., it has the structure of a face poset of a regular CW complex.

\subsection{Preliminaries on posets}

In \cite{CW_posets}, Björner describes sufficient conditions for a poset to be the face poset of a regular CW complex. Note that he defines face posets to have an added least element $\hat 0$, as opposed to our definition of $\Fac[X]$. Björner's definition of CW posets is as follows.

\begin{definition}\label{def:cw_poset}
    A poset $P$ is said to be a \emph{CW poset} if
    \begin{enumerate}
        \item $P$ has a least element $\hat 0$,
        \item $P$ is nontrivial, i.e., has more than one element,
        \item For all $x \in P \setminus \{\hat 0\}$ the realization of the open interval $(\hat 0, x)$ is homeomorphic to a sphere (i.e., to some $S^k$, where $k$ depends on $x$).
    \end{enumerate}
\end{definition}

We now formalize the statement that CW posets are the face posets of regular CW complexes (augmented with a least element).

\begin{definition}
    Let $X$ be a regular CW complex. Let the \emph{face poset} of $X$, denoted $\Fac[X]$, be the poset where the elements are the cells in $X$ and $x \ge y$ whenever $y$ is contained in $x$.

    Denote by $\Fac^{+}[X]$ the face poset $\Fac[X]$ augmented with a least element $\hat 0$. In other words, $\Fac^{+}[X] = \Fac[X] \cup \{\hat 0\}$, where $\hat 0$ is defined to be smaller than all other elements.
\end{definition}

 \textcolor{black}{
The following statement is proved in \cite[Proposition 3.1]{CW_posets}.
}

\begin{theorem}
    A poset $P$ is a CW poset if and only if it is isomorphic to $\Fac^{+}[X]$ for some regular CW complex $X$.
\end{theorem}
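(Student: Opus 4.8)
The plan is to prove the two implications separately, handling the direction ``$\Fac^{+}[X]$ is a CW poset'' first since it is the more routine one, and then reconstructing a regular CW complex from an abstract CW poset by induction on dimension. For the forward direction, suppose $P = \Fac^{+}[X]$ for a nonempty regular CW complex $X$. Conditions (1) and (2) of \autoref{def:cw_poset} are immediate: $\hat 0$ is a least element by construction, and $P$ has at least two elements as soon as $X$ has one cell. For condition (3), fix a cell $x$ of dimension $k$. Since $X$ is regular, its characteristic map $D^{k} \to \bar{x}$ is a homeomorphism onto the closed cell, so $\bar{x} \cong D^{k}$ and its boundary $\partial \bar{x}$ is a subcomplex homeomorphic to $S^{k-1}$. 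Now the open interval $(\hat 0, x)$ in $P$ consists exactly of the proper faces of $x$, so its order complex $\N\big((\hat 0, x)\big)$ agrees with the barycentric subdivision $T(\partial \bar{x})$. Applying the theorem $|T(Y)| \cong Y$ to the regular CW complex $Y = \partial \bar{x}$ gives $|(\hat 0, x)| \cong \partial \bar{x} \cong S^{k-1}$, which is condition (3). Hence $P$ is a CW poset.

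For the reverse direction, let $P$ be a CW poset. First I would establish that $P$ is \emph{graded}: define $d(x)$ by $|(\hat 0, x)| \cong S^{d(x)-1}$, with the convention $S^{-1} = \emptyset$, so that the atoms of $P$, whose open intervals are empty, receive $d(x) = 0$; then show that $y < x$ forces $d(y) < d(x)$ and that $d$ is a rank function. I would then build $X$ skeleton by skeleton, processing the elements of $P \setminus \{\hat 0\}$ in order of increasing $d$. The $0$-skeleton has one vertex for each atom. Assuming the $(n-1)$-skeleton $X^{(n-1)}$ has been constructed together with an isomorphism between $\Fac^{+}[X^{(n-1)}]$ and the subposet $\{\hat 0\} \cup \{y : d(y) \le n-1\}$, consider an element $x$ with $d(x) = n$. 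The down-set $\{y : y < x\}$ corresponds, under the inductive identification, to a subcomplex $Y_{x} \subseteq X^{(n-1)}$ whose barycentric subdivision realizes $|(\hat 0, x)| \cong S^{n-1}$; I attach an $n$-cell $e_{x}$ along a homeomorphism $\partial D^{n} \xrightarrow{\cong} Y_{x}$. Because this attaching map is a homeomorphism onto a subcomplex, the enlarged complex is again regular, and its face poset gains precisely the element $x$ sitting above the faces in $Y_{x}$. Iterating over all dimensions yields a regular CW complex $X$ with $\Fac^{+}[X] \cong P$.

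The main obstacle is the inductive step of the reverse direction, specifically the claim that the down-set subcomplex $Y_{x}$ is genuinely homeomorphic to the sphere $S^{n-1}$ \emph{as a subcomplex of the already-constructed skeleton}, so that a cell can legitimately be attached along it by a homeomorphism. This requires two things to work in tandem: the combinatorial fact that $P$ is graded, so that every $y < x$ has been built before $x$ and lies in a strictly lower skeleton, and the compatibility statement that the concrete subcomplex $Y_{x}$ has the same homeomorphism type as the abstract order complex $|(\hat 0, x)|$ guaranteed by hypothesis (3). Maintaining the invariant $|T(Y_{x})| \cong |(\hat 0, x)|$ for every $x$ processed so far is the crux of the argument; the grading lemma is the technical ingredient I expect to demand the most care, since it is where the sphere hypothesis on open intervals must be converted into a statement about covering relations and ranks in $P$.
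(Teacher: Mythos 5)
Your proposal addresses a statement the paper itself never proves: the result is quoted directly from Bj\"orner \cite{CW_posets}, and what you have written is essentially a reconstruction of Bj\"orner's original argument --- the forward direction via regularity of $X$ and the identification of the order complex of $(\hat 0, x)$ with the barycentric subdivision of the boundary sphere $\partial \bar{x}$, and the converse by grading $P$ and inductively attaching $n$--cells along the subcomplexes realizing the spheres $|(\hat 0, x)| \cong S^{n-1}$. The outline is sound, and the grading lemma you single out as the crux is settled by the standard observation that any simplicial complex triangulating a sphere is pure (by invariance of domain applied to maximal open simplices), which turns the sphere hypothesis into the statement that all maximal chains in $(\hat 0, x)$ have the same length and hence yields the rank function.
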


%
%
%

A poset is \emph{bounded} if it has a least and greatest element. Furthermore, we say that $x$ \emph{covers} $y$ if $x > y$ and there exists no element $z$ such that $x > z > y$.

\begin{definition}\label{def:graded_poset}
    A graded poset is a poset $P$ equipped with a \emph{rank function}, which is a function $\rho \colon P \to \Z$ satisfying the following two properties.
    \begin{enumerate}
        \item The function $\rho$ is compatible with the partial order, meaning that $x > y$ implies $\rho(x) > \rho(y)$.
        \item The function $\rho$ is compatible with the covering relation, meaning that if $x$ covers $y$ then $\rho(x) = \rho(y) + 1$.
    \end{enumerate}
    For an element $x$, we will call $\rho(x)$ the \emph{rank} of $x$.
\end{definition}

\subsection{Hom posets of a discrete flow category}

In this section, we let $X$ be a finite regular CW complex, $f \colon X \to \R$ a discrete Morse function, and $\Sigma$ the Morse system on $\Ent[X]$ consisting of the regular pairs of $f$. Furthermore, we let $w$ and $z$ be arbitrary objects in $\Flo_{\Sigma}[X]$ and consider the Hom poset $\Hom(w, z)$ (note that whenever we write $\Hom(w, z)$, we mean $\Hom_{\Flo_{\Sigma}[X]}(w, z)$).

First, we construct an algebraic invariant for morphisms in the discrete flow category, which we will use when proving that two different morphisms are not equal.

Let $G$ be the free Abelian group on all morphisms in $\Ent[X]$ that are atoms, modulo all identities (i.e., we define identities to be 0). The algebraic invariant will be an element of this group. Given a representative 
\begin{equation*}
    \gamma = \left(w = x_0 \xrightarrow{g_0} y_0 \xleftarrow{f_0} x_1 \xrightarrow{g_1} \cdots \ \xleftarrow{f_{k-1}} x_k \xrightarrow{g_k} y_k = z\right)
\end{equation*}
of some $[\gamma] \in \Hom(w, z)$, we let
\begin{equation*}
    \alpha(g_i) = \left(g_i^0 + g_i^1 + \dots + g_i^{N_i}\right) \in G,
\end{equation*}
where $g_i = g_i^{N_i} \circ \dots \circ g_i^0$ is the atom decomposition of $g_i$. We now define the algebraic invariant $I$ as follows.

\begin{definition}\label{def:algebraic_invariant}
    Let $[\gamma] \in \Hom(w, z)$. Define $I([\gamma])$ as
    \begin{equation}
        I([\gamma]) = \sum_{i=0}^k \alpha(g_i) - \sum_{i=0}^{k-1} f_i.
    \end{equation}
\end{definition}

\begin{lemma}
    The function $I$, as defined in \autoref{def:algebraic_invariant}, is well-defined.
\end{lemma}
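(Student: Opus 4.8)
The plan is to show that $I$ does not depend on the chosen representative $\gamma$ of the class $[\gamma]$. Before doing so, I would record two preliminary facts about the map $\alpha$. First, every morphism $g = (x_0 > x_1 > \dots > x_N)$ of $\Ent[X]$ has a \emph{unique} decomposition into atoms, namely $g = (x_{N-1} > x_N) \circ \dots \circ (x_0 > x_1)$, because composition in $\Ent[X]$ is concatenation of sequences and such a sequence is determined by its entries in order; hence $\alpha(g) = \sum_j (x_j > x_{j+1})$ is itself well defined. Second, and crucially, $\alpha$ is \emph{additive}: $\alpha(g \circ g') = \alpha(g) + \alpha(g')$ whenever the composite is defined, and $\alpha(\id) = 0$. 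Since each $f_i \in \Sigma^{+}$ is an atom, or an identity contributing $0$, I can rewrite the invariant in the uniform form $I([\gamma]) = \sum_i \alpha(g_i) - \sum_i \alpha(f_i)$, which is best suited to the computations below.

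Because the equivalence relation on zigzags is \emph{generated} by the horizontal and vertical relations, it suffices to check that $I$ is unchanged under each of these two elementary moves; invariance under the full equivalence then follows by transitivity. For the horizontal relation, two representatives differ by insertion or deletion of an intermediate identity map. Deleting such a map merges the two neighbouring morphisms into their composite; by additivity of $\alpha$ the merged summand equals the sum of the two original summands, while the removed identity contributes $\alpha(\id) = 0$. Hence horizontal moves preserve $I$.

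The substantive case is the vertical relation, where the top and bottom representatives are the rows of a commutative ladder whose rungs $u_i, v_i$ lie in $\Sigma^{+}$. Reading off the commuting squares and applying the additive map $\alpha$ yields relations of the form $\alpha(u_i) + \alpha(g_i) = \alpha(g_i') + \alpha(v_{i-1})$ over each forward arrow and $\alpha(u_i) + \alpha(f_i) = \alpha(f_i') + \alpha(v_i)$ over each backward arrow, where the two boundary rungs are $\id_w$ and $\id_z$ and so contribute $0$. I would then expand $I(\text{bottom}) - I(\text{top}) = \sum_i [\alpha(g_i') - \alpha(g_i)] - \sum_i [\alpha(f_i') - \alpha(f_i)]$ and substitute: the forward relations make the first sum telescope to $\sum_i \alpha(u_i) - \sum_i \alpha(v_i)$, and the backward relations show the second sum equals exactly the same quantity, so the difference vanishes. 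The main obstacle here is purely organizational, namely keeping the indexing of the feet aligned with that of the rungs so that the telescoping cancellation is transparent; once $\alpha$ is known to be additive no deeper input is needed, and the hypothesis $u_i, v_i \in \Sigma^{+}$ is used only to ensure that the ladder really is a diagram of genuine zigzags rather than in the cancellation itself.
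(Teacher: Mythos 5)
Your proposal is correct and follows essentially the same route as the paper's own proof: invariance is checked separately on the generating horizontal and vertical relations, with horizontal moves handled by additivity of $\alpha$ (and $\alpha(\id)=0$), and vertical moves handled by applying $\alpha$ to the commuting squares of the ladder and letting the resulting sums telescope against the boundary rungs $\id_w$ and $\id_z$. Your preliminary observation that $\alpha$ is itself well defined via the unique atom decomposition in $\Ent[X]$ is a small addition the paper leaves implicit, and your bookkeeping in the vertical case is in fact cleaner than the paper's (which contains a sign slip in the stated relation for $f_i'$ that the telescoping argument silently corrects).
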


\begin{proof}
    To show that $I$ is well-defined, we must show that it is preserved under horizontal and vertical relations.
    \begin{description}[style=multiline]
        \item[(H)] To see that it is preserved under horizontal relations, it is enough to check that $I([\gamma \circ \gamma']) = I([\gamma]) + I([\gamma'])$, and that
        \begin{equation*}
            I([x_i \xrightarrow{g_i} y_i \xleftarrow{\id} x_{i+1} \xrightarrow{g_{i+1}} y_{i+1}]) = I([x_i \xrightarrow{g_{i+1} \circ g_i} y_{i+1}]),
        \end{equation*}
        which follows from the fact that identities are defined to be 0 in $G$, and that $\alpha(g \circ h) = \alpha(g) + \alpha(h)$.
        \item[(V)] To see that $I$ is preserved under vertical relations, consider a diagram:
        \begin{center}
        \begin{tikzpicture}
            \diagram{d}{3em}{3em}{
                w & y_0' & x_1' & y_1' & \cdots & x_k' & z \\
                w & y_0 & x_1 & y_1 & \cdots & x_k & z \\
            };
            
            \path[->,font = \scriptsize, midway]
            (d-1-1) edge node[above]{$g_0'$} (d-1-2)
            (d-1-3) edge node[above]{$f_0'$} (d-1-2)
            (d-1-3) edge node[above]{$g_1'$} (d-1-4)
            (d-1-5) edge node[above]{$f_1'$} (d-1-4)
            (d-1-6) edge node[above]{$f_{k-1}'$} (d-1-5)
            (d-1-6) edge node[above]{$g_k'$} (d-1-7);
    
            \path[->,font = \scriptsize, midway]
            (d-2-1) edge node[below]{$g_0$} (d-2-2)
            (d-2-3) edge node[below]{$f_0$} (d-2-2)
            (d-2-3) edge node[below]{$g_1$} (d-2-4)
            (d-2-5) edge node[below]{$f_1$} (d-2-4)
            (d-2-6) edge node[below]{$f_{k-1}$} (d-2-5)
            (d-2-6) edge node[below]{$g_k$} (d-2-7);
    
            \path[->,font = \scriptsize, midway]
            (d-1-2) edge node[left]{$u_0$} (d-2-2)
            (d-1-3) edge node[left]{$v_1$} (d-2-3)
            (d-1-4) edge node[left]{$u_1$} (d-2-4)
            (d-1-6) edge node[left]{$v_k$} (d-2-6);
            \draw[double] (d-1-1) -- (d-2-1);
            \draw[double] (d-1-7) -- (d-2-7);
    
            \draw[double] (d-2-1) -- (d-1-2);
            \draw[double] (d-2-3) -- (d-1-2);
            \draw[double] (d-2-3) -- (d-1-4);
            \draw[double] (d-2-5) -- (d-1-4);
            \draw[double] (d-2-6) -- (d-1-5);
            \draw[double] (d-2-6) -- (d-1-7);
        \end{tikzpicture}
        \end{center}

        We have $\alpha(g_i') = \alpha(g_i) + v_i - u_i$, and $f_i' = f_i + u_i - v_{i+1}$ (as elements of $G$). Putting this together gives
        \begin{align*}
            I([\gamma]) &= \sum_{i=0}^k \alpha(g_i') - \sum_{i=0}^{k-1} f_i' \\
            &= \sum_{i=0}^k \alpha(g_i) + \sum_{i=0}^k v_i - \sum_{i=0}^k u_i - \left(\sum_{i=0}^{k-1} f_i + \sum_{i=0}^{k-1} u_i - \sum_{i=0}^{k-1} v_{i+1} \right) \\
            &= \sum_{i=0}^k \alpha(g_i) - \sum_{i=0}^{k-1} f_i, \\
        \end{align*}
        where we use that $v_0 = \id_w = 0$ and $u_k = \id_z = 0$. Hence, $I$ is preserved under vertical relations.\qedhere
    \end{description}
\end{proof}


\begin{theorem}\label{thm:flo_po_desc}
    Let $[\gamma]$ and $[\gamma']$ be morphisms in $\Hom(w, z)$ such that $[\gamma] \Rightarrow [\gamma']$. Let
    \begin{equation*}
        \tau = \left(w = x_0 \xrightarrow{g_0} y_0 \xleftarrow{f_0} x_1 \xrightarrow{g_1} \cdots \ \xleftarrow{f_{k-1}} x_k \xrightarrow{g_k} y_k = z\right)
    \end{equation*}
    be a representative of $[\gamma]$.
    Then it is possible to choose a representative $\tau'$ of $[\gamma']$, such that
    \begin{equation*}
        \tau' = \left(w = x_0 \xrightarrow{g_0'} y_0 \xleftarrow{f_0} x_1 \xrightarrow{g_1'} \cdots \ \xleftarrow{f_{k-1}} x_k \xrightarrow{g_k'} y_k = z\right),
    \end{equation*}
    for some $g_i'$, such that $g_i \Rightarrow g_i'$ holds for all $i$.
    
    Furthermore, the $g_i'$ are unique.
\end{theorem}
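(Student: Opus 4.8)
The plan is to treat the two assertions—existence of the representative $\tau'$ and uniqueness of the forward maps $g_i'$—separately, since they call for different tools. Throughout I would unpack the hypothesis $[\gamma] \Rightarrow [\gamma']$ using the definition of the partial order on $\Loc_{\Sigma}[X]$: it furnishes a comparison diagram whose top row is some representative of $[\gamma']$, whose bottom row is some representative of $[\gamma]$, linked by vertical maps $u_i, v_i \in \Sigma^{+}$ together with the diagonal order-relations filling the (non-commuting) squares.

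For existence, I would first reduce to the case in which the bottom row of the comparison diagram is exactly the given representative $\tau$. Since $\tau$ and the bottom row both represent $[\gamma]$, they are connected by a finite sequence of horizontal and vertical moves, so it suffices to check that the existence of a comparison diagram is stable under replacing the bottom row by the result of a single such move, which is a direct diagram-chase. With $\tau$ fixed on the bottom, the remaining task is to straighten the top row onto the backbone $(x_i, y_i, f_i)$ of $\tau$. Because the maps $u_i, v_i$ lie in $\Sigma^{+}$ and hence are invertible in $\Loc_{\Sigma}[X]$, one would like to absorb them by the corresponding vertical move, producing an equivalent representative $\tau'$ whose objects coincide with those of $\tau$ and reading off $g_i \Rightarrow g_i'$ from the diagonal relations. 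Note that in $\Ent[X]$ the relation $g_i \Rightarrow g_i'$ forces $g_i$ and $g_i'$ to share source and target, so that $\tau'$ is automatically a valid zigzag on $\tau$'s backbone.

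For uniqueness I would exploit the algebraic invariant $I$ of \autoref{def:algebraic_invariant}. If $\tau' = (g_i')$ and $\tau'' = (g_i'')$ are two representatives of $[\gamma']$ sharing the backbone of $\tau$, then since both represent $[\gamma']$ the lemma asserting that $I$ is well-defined gives $\sum_i \alpha(g_i') - \sum_i f_i = I([\gamma']) = \sum_i \alpha(g_i'') - \sum_i f_i$; as the maps $f_i$ coincide, this reduces to $\sum_i \alpha(g_i') = \sum_i \alpha(g_i'')$ in $G$. The hypotheses $g_i \Rightarrow g_i'$ and $g_i \Rightarrow g_i''$ confine each of $g_i'$ and $g_i''$ to a refinement of the fixed, known path $g_i$ inside the interval $[y_i, x_i]$. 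I would then argue that the atoms contributed by distinct segments cannot cancel against one another, so that the equality of total sums forces $\alpha(g_i') = \alpha(g_i'')$ for each $i$ individually; finally, a descending path is recovered from its multiset of consecutive atoms, since these chain up uniquely into a single sequence, whence $\alpha(g_i') = \alpha(g_i'')$ yields $g_i' = g_i''$.

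The main obstacle I anticipate lies in the existence half: the comparison diagram is only lax, in that its squares commute merely up to the relation $\Rightarrow$ rather than on the nose, so the straightening of the top row onto $\tau$'s backbone cannot be effected by a single commutative vertical move and must instead be assembled from the diagonal $2$-cells. Making this precise—showing that the laxness collapses exactly to the relations $g_i \Rightarrow g_i'$ and that nothing is lost when the vertical $\Sigma^{+}$-maps are absorbed—is where the directedness of $\Sigma$ and the atom axioms will have to do the real work. In the uniqueness half the analogous delicate point is the segment-separation claim, that refinements living in different intervals $[y_i, x_i]$ contribute independently to $I$, which I expect to follow from the same rigidity of $\Sigma$.
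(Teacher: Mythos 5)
Your outline reproduces the architecture of the paper's proof---first reduce to the case where the bottom row of the comparison diagram is the given $\tau$, then straighten the top row onto $\tau$'s backbone, and handle uniqueness via the invariant $I$---but in both halves the step you explicitly defer is the entire mathematical content, so what you have is a plan rather than a proof. For existence, absorbing the vertical maps $u_i, v_i \in \Sigma^{+}$ by their invertibility in $\Loc_{\Sigma}[X]$ cannot work as stated: being a representative of $[\gamma']$ means being connected to the top row by the generating horizontal and vertical relations, and inverting the $u_i, v_i$ produces a zigzag that is not visibly so connected, precisely because the squares only commute laxly. The paper's actual argument is an induction along the zigzag with a three-way case split (according to whether $x_1 \ne x_1'$, $y_0 \ne y_0'$, or both); in each case the atom axioms applied to the relation $g_1 \circ u_0 \Rightarrow u_1 \circ g_1'$, with $u_0$ an atom of the form $(x_1' > x_1)$, force the dichotomy $g_1' = \hat g_1 \circ u_0$ for some $\hat g_1$, or $g_1' = \id$, and in either case the top row is replaced by a \emph{vertically related} zigzag whose objects agree with the bottom row one step further to the right. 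The same dichotomy is needed already in your preliminary reduction: removing an intermediate identity map from the bottom row is not a direct diagram chase but requires exactly this analysis.

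In the uniqueness half the gap is your ``segment separation'' claim, and the reason you offer for it misdiagnoses the difficulty. Every atom enters $\sum_i \alpha(g_i')$ with coefficient $+1$, so there is nothing to cancel; the danger is that the same total multiset of atoms might be partitioned differently among the segments, which cannot be excluded by disjointness because the intervals $[y_i, x_i]$ can overlap (a zigzag may revisit cells). What the paper does instead---and this is the key idea missing from your sketch---is to use the Morse function itself: assume without loss of generality that $f$ is injective on cells, set $\beta\bigl((x>y)\bigr) = (\dim x, f(x), f(y)) \in \R^3$, and verify, using that the left-pointing maps $f_i$ are regular pairs, that $\beta$ is lexicographically non-increasing along the atoms of the right-pointing arrows in their order of appearance, with equality only for identical atoms. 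Injectivity of $\beta$ then gives the atom multiset determined by $I([\gamma'])$ a unique admissible linear order, and the known backbone cuts that ordered list into the individual $g_i'$. Without this ordering argument (or a substitute for it), the per-segment equality $\alpha(g_i') = \alpha(g_i'')$ is unjustified, and the uniqueness claim remains open; your final step, recovering each $g_i'$ from its set of atoms, is fine once that equality is in hand.
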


Note that a partial order $g_i \Rightarrow g_i'$ means that $g_i = (z_0 > z_1 > \dots > z_m)$ is a subsequence of $g_i' = (z_0' > z_1' > \dots > z_n')$. Therefore, this theorem tells us that a partial order $[\gamma] \Rightarrow [\gamma']$ corresponds to picking a representative of $[\gamma]$ and adding elements to the sequences that constitutes the right-pointing arrows (the $g_i$).


\begin{proof}
    We prove this in two parts. First we show that there exists a diagram

    \begin{equation}\label{eq:po_diagram}\begin{aligned}
    \begin{tikzpicture}
        \diagram{d}{3em}{3em}{
            w & \bar y_0 & \bar x_1 & \bar y_1 & \cdots & \bar x_k & z \\
            w & y_0 & x_1 & y_1 & \cdots & x_k & z \\
        };
        
        \path[->,font = \scriptsize, midway]
        (d-1-1) edge node[above]{$\bar g_0$} (d-1-2)
        (d-1-3) edge node[above]{$\bar f_0$} (d-1-2)
        (d-1-3) edge node[above]{$\bar g_1$} (d-1-4)
        (d-1-5) edge node[above]{$\bar f_1$} (d-1-4)
        (d-1-6) edge node[above]{$\bar f_{k-1}$} (d-1-5)
        (d-1-6) edge node[above]{$\bar g_k$} (d-1-7);

        \path[->,font = \scriptsize, midway]
        (d-2-1) edge node[below]{$g_0$} (d-2-2)
        (d-2-3) edge node[below]{$f_0$} (d-2-2)
        (d-2-3) edge node[below]{$g_1$} (d-2-4)
        (d-2-5) edge node[below]{$f_1$} (d-2-4)
        (d-2-6) edge node[below]{$f_{k-1}$} (d-2-5)
        (d-2-6) edge node[below]{$g_k$} (d-2-7);

        \path[->,font = \scriptsize, midway]
        (d-1-2) edge node[left]{$u_0$} (d-2-2)
        (d-1-3) edge node[left]{$v_1$} (d-2-3)
        (d-1-4) edge node[left]{$u_1$} (d-2-4)
        (d-1-6) edge node[left]{$v_k$} (d-2-6);
        \draw[double] (d-1-1) -- (d-2-1);
        \draw[double] (d-1-7) -- (d-2-7);

        \draw[->, double] (d-2-1) -- (d-1-2);
        \draw[->, double] (d-2-3) -- (d-1-2);
        \draw[->, double] (d-2-3) -- (d-1-4);
        \draw[->, double] (d-2-5) -- (d-1-4);
        \draw[->, double] (d-2-6) -- (d-1-5);
        \draw[->, double] (d-2-6) -- (d-1-7);
    \end{tikzpicture}
    \end{aligned}\end{equation}
    where the bottom row is $\tau$ and the top row is a representative of $[\gamma']$. Then we show that we can modify this diagram so that the top row is of the form
    \begin{equation*}
        \tau' = \left(w = x_0 \xrightarrow{g_0'} y_0 \xleftarrow{f_0} x_1 \xrightarrow{g_1'} \cdots \ \xleftarrow{f_{k-1}} x_k \xrightarrow{g_k'} y_k = z\right),
    \end{equation*}
    where $\tau'$ is still a representative of $[\gamma']$.

    To show the first part, we show that given a diagram representing a partial order relation (as the one in \eqref{eq:po_diagram}), we can replace the bottom row by either
    \begin{description}[style=multiline]
        \item[(V)] a vertically related representative, or
        \item[(H)] a horizontally related representative,
    \end{description}
    and modify the top row appropriately to get a new diagram.
    \begin{description}[style=multiline]
        \item[(V)] The case with vertically related representatives is simple; you simply compose the partial order diagram with the vertical relation diagram.
        
        \item[(H)] For horizontal relations, we must show that you can both add intermediate identity maps, and remove them.
        
        The case with adding intermediate identity maps is also simple, you simply add identity maps to both the top and bottom rows and copy the vertical map (clearly, the top rows is also horizontally related). For example, for adding an identity map at an $x_i$, you replace

        \begin{center}
        \begin{tikzpicture}[scale=.5]
            \diagram{d}{3em}{3em}{
                \cdots & x_i' & \cdots \\
                \cdots & x_i & \cdots \\
            };
            
            \path[->,font = \scriptsize, midway]
            (d-1-2) edge (d-1-1)
            (d-1-2) edge (d-1-3);
    
            \path[->,font = \scriptsize, midway]
            (d-2-2) edge (d-2-1)
            (d-2-2) edge (d-2-3);
    
            \path[->,font = \scriptsize, midway]
            (d-1-2) edge node[left]{$v_i$} (d-2-2);
    
            \draw[->, double] (d-2-2) -- (d-1-1);
            \draw[->, double] (d-2-2) -- (d-1-3);
        \end{tikzpicture}
        \end{center}

        with 

        \begin{center}
        \begin{tikzpicture}[scale=.5]
            \diagram{d}{3em}{3em}{
                \cdots & x_i' & x_i' & x_i' & \cdots \\
                \cdots & x_i & x_i & x_i & \cdots \\
            };
            
            \path[->,font = \scriptsize, midway]
            (d-1-2) edge (d-1-1)
            (d-1-2) edge node[above]{$\id$} (d-1-3)
            (d-1-4) edge node[above]{$\id$} (d-1-3)
            (d-1-4) edge (d-1-5);
    
            \path[->,font = \scriptsize, midway]
            (d-2-2) edge (d-2-1)
            (d-2-2) edge node[below]{$\id$} (d-2-3)
            (d-2-4) edge node[below]{$\id$} (d-2-3)
            (d-2-4) edge (d-2-5);
    
            \path[->,font = \scriptsize, midway]
            (d-1-2) edge node[left]{$v_i$} (d-2-2)
            (d-1-3) edge node[left]{$v_i$} (d-2-3)
            (d-1-4) edge node[left]{$v_i$} (d-2-4);
    
            \draw[->, double] (d-2-2) -- (d-1-1);
            \draw[->, double] (d-2-2) -- (d-1-3);
            \draw[->, double] (d-2-4) -- (d-1-3);
            \draw[->, double] (d-2-4) -- (d-1-5);
        \end{tikzpicture}
        \end{center}

        Removing intermediate identity maps is more complicated (in fact, it only works on the bottom row, not the top row). Consider a diagram

        \begin{center}
        \begin{tikzpicture}
            \diagram{d}{3em}{3em}{
                x_i' & y_i' & x_{i+1}' & y_{i+1}' \\
                x_i & y_i & x_{i+1} & y_{i+1} \\
            };
            
            \path[->,font = \scriptsize, midway]
            (d-1-1) edge node[above]{$g_i'$} (d-1-2)
            (d-1-3) edge node[above]{$f_i'$} (d-1-2)
            (d-1-3) edge node[above]{$g_{i+1}'$} (d-1-4);
    
            \path[->,font = \scriptsize, midway]
            (d-2-1) edge node[below]{$g_i$} (d-2-2)
            (d-2-3) edge node[below]{$g_{i+1}$} (d-2-4);
            \draw[double] (d-2-2) -- (d-2-3);
    
            \path[->,font = \scriptsize, midway]
            (d-1-1) edge node[left]{$v_i$} (d-2-1)
            (d-1-2) edge node[left]{$u_i$} (d-2-2)
            (d-1-3) edge node[left]{$v_{i+1}$} (d-2-3)
            (d-1-4) edge node[right]{$u_{i+1}$} (d-2-4);
    
            \draw[->, double] (d-2-1) -- (d-1-2);
            \draw[->, double] (d-2-3) -- (d-1-2);
            \draw[->, double] (d-2-3) -- (d-1-4);
        \end{tikzpicture}
        \end{center}

        If $f_i' = \id$, we can clearly remove the identity maps from both rows. Suppose not. Then $u_i$ must be $\id$ and $v_{i+1} = f_i'$. As $g_{i+1} \circ v_{i+1} \Rightarrow u_{i+1} \circ g_{i+1}'$, and $v_{i+1}$ is an atom on the form $(x_{i+1}' > x_{i+1})$ with $\dim x_{i+1}' = \dim x_{i+1} + 1$, we must have that $v_{i+1} \circ g_{i+1}'$ is sequence that starts with $(x_1' > x_1)$. There are now two cases: either $g_{i+1}' = \hat g_{i+1} \circ v_{i+1}$ for some $\hat g_{i+1}$, or $g_{i+1}' = \id$ and $u_{i+1} = v_{i+1}$.

        In the first case, the top row is equivalent to
        \begin{equation*}
            x_i' \xrightarrow{\hat g_{i+1} \circ g_i'} y_{i+1},
        \end{equation*}
        as $g_{i+1}' = \hat g_i \circ v_{i+1} = \hat g_i \circ f_i'$, and we can replace the diagram with

        \begin{center}
        \begin{tikzpicture}
            \diagram{d}{3em}{3em}{
                x_i' & y_{i+1}' \\
                x_i & y_{i+1} \\
            };
            
            \path[->,font = \scriptsize, midway]
            (d-1-1) edge node[above]{$\hat g_{i+1} \circ g_i'$} (d-1-2);
    
            \path[->,font = \scriptsize, midway]
            (d-2-1) edge node[below]{$g_{i+1} \circ g_i$} (d-2-2);
    
            \path[->,font = \scriptsize, midway]
            (d-1-1) edge node[left]{$v_i$} (d-2-1)
            (d-1-2) edge node[right]{$u_{i+1}$} (d-2-2);
    
            \draw[->, double] (d-2-1) -- (d-1-2);
        \end{tikzpicture}
        \end{center}

        The reader can verify that in the case $g_{i+1}' = \id$, $f_{i+1}'$ will be forced to equal $\id$, which allows us to make a similar rewrite to remove the identity map from the bottom row.
    \end{description}
    
    Now, we have a diagram

    \begin{center}
    \begin{tikzpicture}
        \diagram{d}{3em}{3em}{
            w & y_0' & x_1' & y_1' & \cdots & x_k' & z \\
            w & y_0 & x_1 & y_1 & \cdots & x_k & z \\
        };
        
        \path[->,font = \scriptsize, midway]
        (d-1-1) edge node[above]{$g_0'$} (d-1-2)
        (d-1-3) edge node[above]{$f_0'$} (d-1-2)
        (d-1-3) edge node[above]{$g_1'$} (d-1-4)
        (d-1-5) edge node[above]{$f_1'$} (d-1-4)
        (d-1-6) edge node[above]{$f_{k-1}'$} (d-1-5)
        (d-1-6) edge node[above]{$g_k'$} (d-1-7);

        \path[->,font = \scriptsize, midway]
        (d-2-1) edge node[below]{$g_0$} (d-2-2)
        (d-2-3) edge node[below]{$f_0$} (d-2-2)
        (d-2-3) edge node[below]{$g_1$} (d-2-4)
        (d-2-5) edge node[below]{$f_1$} (d-2-4)
        (d-2-6) edge node[below]{$f_{k-1}$} (d-2-5)
        (d-2-6) edge node[below]{$g_k$} (d-2-7);

        \path[->,font = \scriptsize, midway]
        (d-1-2) edge node[left]{$u_0$} (d-2-2)
        (d-1-3) edge node[left]{$v_1$} (d-2-3)
        (d-1-4) edge node[left]{$u_1$} (d-2-4)
        (d-1-6) edge node[left]{$v_k$} (d-2-6);
        \draw[double] (d-1-1) -- (d-2-1);
        \draw[double] (d-1-7) -- (d-2-7);

        \draw[->, double] (d-2-1) -- (d-1-2);
        \draw[->, double] (d-2-3) -- (d-1-2);
        \draw[->, double] (d-2-3) -- (d-1-4);
        \draw[->, double] (d-2-5) -- (d-1-4);
        \draw[->, double] (d-2-6) -- (d-1-5);
        \draw[->, double] (d-2-6) -- (d-1-7);
    \end{tikzpicture}
    \end{center}
    
    such that the bottom row is a $[\tau]$ and the top row is a representative of $[\gamma']$.

    Assume either $x_1 \ne x_1'$ or $y_0 \ne y_0'$. There are three cases to consider:
    \begin{description}[style=multiline]
    \item[(I)] $x_1 \ne x_1'$ and $y_0 \ne y_0'$
    \item[(II)] $x_1 = x_1'$ and $y_0\ne y_0'$
    \item[(III)] $x_1 \ne x_1'$ and $y_0 = y_0'$.
    \end{description}

	 \textcolor{black}{In the following three cases, we will use that $\Sigma$ satisfies the \emph{exhaustion axiom} (\cite[Definition 3.5]{Nanda}), which implies that if $f \colon x \to y$ is in $\Sigma$, then no other morphism in $\Sigma$ has either $x$ or $y$ as either source or target. It is straightforward to show that $\Sigma$ has this property, as regular pairs of a discrete Morse function are pairwise disjoint. }

    \begin{description}[labelwidth=1.25cm,leftmargin=!]
        \item[Case I] 
        In this case, the exhaustion axiom gives that $y_0 = x_1$ and $y_0' = x_1'$, which implies that $v_1 = u_0$. The first three squares in the diagram is then:

        \begin{center}
        \begin{tikzpicture}
            \diagram{d}{3em}{3em}{
                w & y_0' & x_1' & y_1' \\
                w & y_0 & x_1 & y_1 \\
            };
            
            \path[->,font = \scriptsize, midway]
            (d-1-1) edge node[above]{$g_0'$} (d-1-2)
            (d-1-3) edge node[above]{$g_1'$} (d-1-4);
            \draw[double] (d-1-2) -- (d-1-3);
    
            \path[->,font = \scriptsize, midway]
            (d-2-1) edge node[below]{$g_0$} (d-2-2)
            (d-2-3) edge node[below]{$g_1$} (d-2-4);
            \draw[double] (d-2-2) -- (d-2-3);
    
            \path[->,font = \scriptsize, midway]
            (d-1-2) edge node[left]{$u_0$} (d-2-2)
            (d-1-3) edge node[left]{$u_0$} (d-2-3)
            (d-1-4) edge node[left]{$u_1$} (d-2-4);
            \draw[double] (d-1-1) -- (d-2-1);
    
            \draw[->, double] (d-2-1) -- (d-1-2);
            \draw[->, double] (d-2-3) -- (d-1-2);
            \draw[->, double] (d-2-3) -- (d-1-4);
        \end{tikzpicture}
        \end{center}

        As $g_1 \circ u_0 \Rightarrow u_1 \circ g_1'$, and $u_0$ is an atom on the form $(x_1' > x_1)$ with $\dim x_1' = \dim x_1 + 1$, we must have that $u_1 \circ g_1'$ is sequence that starts with $(x_1' > x_1)$. There are now two cases: either $g_1' = \hat g_1 \circ u_0$ for some $\hat g_1$, or $g_1' = \id_{x_1'}$ and $u_1 = u_0 = g_1$.
        
        In the first case, the top row is vertically related to 
        \begin{equation*}
            w \xrightarrow{u_0 \circ g_0'} y_0 \xleftarrow{\id} x_1 \xrightarrow{\hat g_1} y_1',
        \end{equation*}
        so we can replace the diagram with:

        \begin{center}
        \begin{tikzpicture}
            \diagram{d}{3em}{3em}{
                w & y_0 & x_1 & y_1' \\
                w & y_0 & x_1 & y_1 \\
            };
            
            \path[->,font = \scriptsize, midway]
            (d-1-1) edge node[above]{$u_0 \circ g_0'$} (d-1-2)
            (d-1-3) edge node[above]{$\hat g_1$} (d-1-4);
            \draw[double] (d-1-2) -- (d-1-3);
    
            \path[->,font = \scriptsize, midway]
            (d-2-1) edge node[below]{$g_0$} (d-2-2)
            (d-2-3) edge node[below]{$g_1$} (d-2-4);
            \draw[double] (d-2-2) -- (d-2-3);
    
            \path[->,font = \scriptsize, midway]
            (d-1-4) edge node[left]{$u_1$} (d-2-4);
            \draw[double] (d-1-1) -- (d-2-1);
            \draw[double] (d-1-2) -- (d-2-2);
            \draw[double] (d-1-3) -- (d-2-3);
    
            \draw[->, double] (d-2-1) -- (d-1-2);
            \draw[->, double] (d-2-3) -- (d-1-2);
            \draw[->, double] (d-2-3) -- (d-1-4);
        \end{tikzpicture}
        \end{center}

        In the second case, we must have $g_1 = \id$ and $u_1 = \id$. Hence, the first four squares in the diagram:

        \begin{center}
        \begin{tikzpicture}
            \diagram{d}{3em}{3em}{
                w & y_0' & x_1' & y_1' & x_2' \\
                w & y_0 & x_1 & y_1 & x_2 \\
            };
            
            \path[->,font = \scriptsize, midway]
            (d-1-1) edge node[above]{$g_0'$} (d-1-2);
            \draw[double] (d-1-2) -- (d-1-3);
            \draw[double] (d-1-3) -- (d-1-4);
            \draw[double] (d-1-4) -- (d-1-5);
    
            \path[->,font = \scriptsize, midway]
            (d-2-1) edge node[below]{$g_0$} (d-2-2)
            (d-2-5) edge node[below]{$f_1$} (d-2-4);
            \draw[double] (d-2-2) -- (d-2-3);
            \draw[double] (d-2-3) -- (d-2-4);
    
            \path[->,font = \scriptsize, midway]
            (d-1-2) edge node[left]{$u_0$} (d-2-2)
            (d-1-3) edge node[left]{$u_0$} (d-2-3)
            (d-1-4) edge node[left]{$u_0$} (d-2-4)
            (d-1-5) edge node[left]{$v_2$} (d-2-5);
            \draw[double] (d-1-1) -- (d-2-1);
    
            \draw[->, double] (d-2-1) -- (d-1-2);
            \draw[->, double] (d-2-3) -- (d-1-2);
            \draw[->, double] (d-2-3) -- (d-1-4);
            \draw[->, double] (d-2-5) -- (d-1-4);
        \end{tikzpicture}
        \end{center}

        Now, the top row is vertically related to
        \begin{equation*}
            w \xrightarrow{u_0 \circ g_0'} y_0 \xleftarrow{\id} x_1 \xrightarrow{\id} y_1' \xleftarrow{u_0} x_2',
        \end{equation*}
        so we can replace the diagram with:

        \begin{center}
        \begin{tikzpicture}
            \diagram{d}{3em}{3em}{
                w & y_0' & x_1' & y_1' & x_2' \\
                w & y_0 & x_1 & y_1 & x_2 \\
            };
            
            \path[->,font = \scriptsize, midway]
            (d-1-1) edge node[above]{$u_0 \circ g_0'$} (d-1-2)
            (d-1-5) edge node[above]{$u_0$} (d-1-4);
            \draw[double] (d-1-2) -- (d-1-3);
            \draw[double] (d-1-3) -- (d-1-4);
    
            \path[->,font = \scriptsize, midway]
            (d-2-1) edge node[below]{$g_0$} (d-2-2)
            (d-2-5) edge node[below]{$f_1$} (d-2-4);
            \draw[double] (d-2-2) -- (d-2-3);
            \draw[double] (d-2-3) -- (d-2-4);
    
            \path[->,font = \scriptsize, midway]
            (d-1-5) edge node[left]{$v_2$} (d-2-5);
            \draw[double] (d-1-1) -- (d-2-1);
            \draw[double] (d-1-2) -- (d-2-2);
            \draw[double] (d-1-3) -- (d-2-3);
            \draw[double] (d-1-4) -- (d-2-4);
    
            \draw[->, double] (d-2-1) -- (d-1-2);
            \draw[->, double] (d-2-3) -- (d-1-2);
            \draw[->, double] (d-2-3) -- (d-1-4);
            \draw[->, double] (d-2-5) -- (d-1-4);
        \end{tikzpicture}
        \end{center}

        %
        %
    %
        %
    %
        
        \item[Case II] 
        In this case the exhaustion axiom gives $y_0' = x_1'$ and $f_0 = u_0$, so the first three squares of the diagram is:

        \begin{center}
        \begin{tikzpicture}
            \diagram{d}{3em}{3em}{
                w & y_0' & x_1' & y_1' \\
                w & y_0 & x_1 & y_1 \\
            };
            
            \path[->,font = \scriptsize, midway]
            (d-1-1) edge node[above]{$g_0'$} (d-1-2)
            (d-1-3) edge node[above]{$g_1'$} (d-1-4);
            \draw[double] (d-1-2) -- (d-1-3);
    
            \path[->,font = \scriptsize, midway]
            (d-2-1) edge node[below]{$g_0$} (d-2-2)
            (d-2-3) edge node[below]{$u_0$} (d-2-2)
            (d-2-3) edge node[below]{$g_1$} (d-2-4);
    
            \path[->,font = \scriptsize, midway]
            (d-1-2) edge node[left]{$u_0$} (d-2-2)
            (d-1-4) edge node[left]{$u_1$} (d-2-4);
            \draw[double] (d-1-1) -- (d-2-1);
            \draw[double] (d-1-3) -- (d-2-3);
    
            \draw[->, double] (d-2-1) -- (d-1-2);
            \draw[->, double] (d-2-3) -- (d-1-2);
            \draw[->, double] (d-2-3) -- (d-1-4);
        \end{tikzpicture}
        \end{center}

        The top row is vertically related to $x_0 \xrightarrow{u_0 \circ g_0'} y_0 \xleftarrow{u_0} x_1 \xrightarrow{g_1} y_1'$ (see \cite[Remark 2.9]{Nanda}). We can thus replace this part of the diagram with:

        \begin{center}
        \begin{tikzpicture}
            \diagram{d}{3em}{3em}{
                w & y_0 & x_1' & y_1' \\
                w & y_0 & x_1 & y_1 \\
            };
            
            \path[->,font = \scriptsize, midway]
            (d-1-1) edge node[above]{$u_0 \circ g_0'$} (d-1-2)
            (d-1-3) edge node[above]{$u_0$} (d-1-2)
            (d-1-3) edge node[above]{$g_1'$} (d-1-4);
    
            \path[->,font = \scriptsize, midway]
            (d-2-1) edge node[below]{$g_0$} (d-2-2)
            (d-2-3) edge node[below]{$u_0$} (d-2-2)
            (d-2-3) edge node[below]{$g_1$} (d-2-4);
    
            \path[->,font = \scriptsize, midway]
            (d-1-4) edge node[left]{$u_1$} (d-2-4);
            \draw[double] (d-1-1) -- (d-2-1);
            \draw[double] (d-1-2) -- (d-2-2);
            \draw[double] (d-1-3) -- (d-2-3);
    
            \draw[->, double] (d-2-1) -- (d-1-2);
            \draw[->, double] (d-2-3) -- (d-1-2);
            \draw[->, double] (d-2-3) -- (d-1-4);
        \end{tikzpicture}
        \end{center}

        \item[Case III] 
        In this case the exhaustion axiom gives $y_0 = x_1$ and $f_0' = v_1$, so the first three squares of the diagram is:

        \begin{center}
        \begin{tikzpicture}
            \diagram{d}{3em}{3em}{
                w & y_0' & x_1' & y_1' \\
                w & y_0 & x_1 & y_1 \\
            };
            
            \path[->,font = \scriptsize, midway]
            (d-1-1) edge node[above]{$g_0'$} (d-1-2)
            (d-1-3) edge node[above]{$v_1$} (d-1-2)
            (d-1-3) edge node[above]{$g_1'$} (d-1-4);
    
            \path[->,font = \scriptsize, midway]
            (d-2-1) edge node[below]{$g_0$} (d-2-2)
            (d-2-3) edge node[below]{$g_1$} (d-2-4);
            \draw[double] (d-2-2) -- (d-2-3);
    
            \path[->,font = \scriptsize, midway]
            (d-1-3) edge node[left]{$v_1$} (d-2-3)
            (d-1-4) edge node[left]{$u_1$} (d-2-4);
            \draw[double] (d-1-1) -- (d-2-1);
            \draw[double] (d-1-2) -- (d-2-2);
    
            \draw[->, double] (d-2-1) -- (d-1-2);
            \draw[->, double] (d-2-3) -- (d-1-2);
            \draw[->, double] (d-2-3) -- (d-1-4);
        \end{tikzpicture}
        \end{center}

        As in the first case, either $g_1' = \id$ or $g_1' = \hat g_1 \circ v_1$ for some $\hat g_1$. In the latter case, the top row is vertically related to
        \begin{equation*}
            w \xrightarrow{g_0'} y_0' \xleftarrow{\id} x_1 \xrightarrow{\hat g_1} y_1',
        \end{equation*}
        and we can replace the diagram with

        \begin{center}
        \begin{tikzpicture}
            \diagram{d}{3em}{3em}{
                w & y_0 & x_1 & y_1' \\
                w & y_0 & x_1 & y_1 \\
            };
            
            \path[->,font = \scriptsize, midway]
            (d-1-1) edge node[above]{$g_0'$} (d-1-2)
            (d-1-3) edge node[above]{$\hat g_1$} (d-1-4);
            \draw[double] (d-1-2) -- (d-1-3);
    
            \path[->,font = \scriptsize, midway]
            (d-2-1) edge node[below]{$g_0$} (d-2-2)
            (d-2-3) edge node[below]{$g_1$} (d-2-4);
            \draw[double] (d-2-2) -- (d-2-3);
    
            \path[->,font = \scriptsize, midway]
            (d-1-4) edge node[left]{$u_1$} (d-2-4);
            \draw[double] (d-1-1) -- (d-2-1);
            \draw[double] (d-1-2) -- (d-2-2);
            \draw[double] (d-1-3) -- (d-2-3);
    
            \draw[->, double] (d-2-1) -- (d-1-2);
            \draw[->, double] (d-2-3) -- (d-1-2);
            \draw[->, double] (d-2-3) -- (d-1-4);
        \end{tikzpicture}
        \end{center}

        If, instead, $g_1' = \id$, we must have $g_1 = \id$ and $u_1 = v_1$. As in case 1, we look at the first four squares of the diagram, and see that we can replace the top row, in this case with
        \begin{equation*}
            w \xrightarrow{g_0'} y_0' \xleftarrow{\id} x_1 \xrightarrow{\id} y_1 \xleftarrow{v_1} x_2'.
        \end{equation*}

        %
        %
    %
    %
    %

    \end{description}

    Continuing this process on the right until you reach the end of the diagram, you end up with a new diagram on the form

    \begin{center}
    \begin{tikzpicture}
        \diagram{d}{3em}{3em}{
            w & y_0 & x_1 & y_1 & \cdots & x_k & z \\
            w & y_0 & x_1 & y_1 & \cdots & x_k & z \\
        };
        
        \path[->,font = \scriptsize, midway]
        (d-1-1) edge node[above]{$g_0''$} (d-1-2)
        (d-1-3) edge node[above]{$f_0$} (d-1-2)
        (d-1-3) edge node[above]{$g_1''$} (d-1-4)
        (d-1-5) edge node[above]{$f_1$} (d-1-4)
        (d-1-6) edge node[above]{$f_{k-1}$} (d-1-5)
        (d-1-6) edge node[above]{$g_k''$} (d-1-7);

        \path[->,font = \scriptsize, midway]
        (d-2-1) edge node[below]{$g_0$} (d-2-2)
        (d-2-3) edge node[below]{$f_0$} (d-2-2)
        (d-2-3) edge node[below]{$g_1$} (d-2-4)
        (d-2-5) edge node[below]{$f_1$} (d-2-4)
        (d-2-6) edge node[below]{$f_{k-1}$} (d-2-5)
        (d-2-6) edge node[below]{$g_k$} (d-2-7);

        \draw[double] (d-1-1) -- (d-2-1);
        \draw[double] (d-1-2) -- (d-2-2);
        \draw[double] (d-1-3) -- (d-2-3);
        \draw[double] (d-1-4) -- (d-2-4);
        \draw[double] (d-1-6) -- (d-2-6);
        \draw[double] (d-1-7) -- (d-2-7);

        \draw[->, double] (d-2-1) -- (d-1-2);
        \draw[->, double] (d-2-3) -- (d-1-2);
        \draw[->, double] (d-2-3) -- (d-1-4);
        \draw[->, double] (d-2-5) -- (d-1-4);
        \draw[->, double] (d-2-6) -- (d-1-5);
        \draw[->, double] (d-2-6) -- (d-1-7);
    \end{tikzpicture}
    \end{center}

    where  \textcolor{black}{the bottom row} is $\tau$ and the top row is a representative of $[\gamma']$. It follows from the diagram that $g_i \Rightarrow g_i''$ for all $i$, and hence the top row is our desired representative $\tau'$.

    Finally, we prove that the $g_i'$ are unique. We do this by using the algebraic invariant $I$, as defined in \autoref{def:algebraic_invariant}, and showing that given $I$, the $g_i'$ are decided by the $x_i$, $y_i$ and $f_i$.

    We know that $I$ is an algebraic invariant for morphisms in $\Hom(w, z)$. Thus, given a representation $\tau'$ as in the theorem statement, where the $x_i$, $y_i$ and $f_i$ are given, we know what all the atoms in the $g_i'$ are. It remains to show that these atoms can only appear in one specific order, which will mean that all the $g_i'$ are uniquely defined.
    
    
    For this, recall that $f \colon X \to \R$ is the Morse function from which the Morse system is induced. We can assume without loss of generality that $f$ is injective on the cells of $X$. Now, for an atom $(x > y)$, let $\beta((x>y)) = (\dim x, f(x), f(y)) \in \R^3$. Then $\beta((x>y)) = \beta((x'>y'))$ if and only if $(x>y) = (x'>y')$ (as $f$ is injective). Furthermore, using the lexicographical ordering on $\R^3$, $\beta$ is decreasing on the atoms in the right-pointing arrows of a representation, in the order they appear:
    \begin{itemize}
        \item If $(x' > y')$ directly succeeds $(x > y)$ (possibly with an intermediate identity maps), then $x' = y$, and $\dim x' > \dim y' = \dim x > \dim y$.
        \item If there is a non-identity left-pointing map $f_i$ directly between two atoms $(x>y)$ and $(x'>y')$, then $f_i = (x'>y)$, and $\dim x = \dim x'$. Then, we have $f(y) > f(x')$, and either $x = x'$ or $f(x) > f(y)$, so $f(x) \ge f(x')$. Likewise, either $y = y'$ or $f(x') > f(y')$ so $f(y) \ge f(y')$. In conclusion, this gives $\beta((x>y)) \ge \beta((x'>y'))$.
    \end{itemize}

    Thus, given the $x_i$, $y_i$ and $f_i$ in a representation $\tau'$ as in the theorem statement, the atoms in the $g_i'$, and their order, is uniquely determined, and hence the $g_i'$ themselves are uniquely determined, which proves the last part of the theorem.
\end{proof}


Now, let $\Hom(w, z)^{\op}$ be the \emph{opposite} of the poset $\Hom(w, z)$, i.e., the poset with elements equal to the elements of $\Hom(w, z)$ and with the partial order reversed. Define further $P_{w, z}$ as $\Hom(w, z)^{\op}$ augmented with a least element $\hat 0$. In other words,
\begin{equation}\label{eq:Pwz_def}
    P_{w, z} = \Hom(w, z)^{\op} \cup \{\hat 0\},
\end{equation}
where $\hat 0$ is defined to be smaller than all other elements.

Next, we show that $P_{w, z}$ is graded by defining a rank function on it. To do this, we first define the following rank function $r$ on $\Hom_{\Ent[X]}(a, b)^{\op}$, for arbitrary objects $a, b \in \Ent[X]$.
\begin{equation}\label{eq:r_def}
    r\left(a = x_0 > x_1 > \dots > x_k = b\right) = k
\end{equation}
Here, we take $r(\id_x)$ to be $0$.

It's easy to verify that $r$ is, in fact, a rank function. Also, observe that $r$ satisfies the following identity.

\begin{equation}\label{eq:r_concat}
    r(\gamma \circ \gamma') = r(\gamma) + r(\gamma')
\end{equation}

We now define a rank function on $P_{w, z}$.

\begin{definition}\label{def:P_rank}
    Define the function $\rho \colon P_{w, z} \to \mathbb{Z}$ as follows.
    
    Let $x \in P_{w, z}$. If $x = \hat 0$, then $\rho(x) = -1$. Else, let $x$ be represented by
    \begin{equation*}
        \tau = \left(w = x_0 \xrightarrow{g_0} y_0 \xleftarrow{f_0} x_1 \xrightarrow{g_1} \cdots \ \xleftarrow{f_{k-1}} x_k \xrightarrow{g_k} y_k = z\right).
    \end{equation*}
    Then,
    \begin{equation*}
        \rho(x) = N + \sum_{i=0}^{k-1} r(f_i) - \sum_{i=0}^{k} r(g_i),
    \end{equation*}
    where $N = \dim w - \dim z$.
\end{definition}

The rank function $\rho$ can be interpreted as the number of cells that are ``skipped'' in the $g_i$'s in the representative.

\begin{theorem}
    The function $\rho$, as defined in \autoref{def:P_rank}, is well-defined and a rank function.
\end{theorem}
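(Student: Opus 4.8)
The plan is to verify the two defining requirements of \autoref{def:graded_poset} separately: first that $\rho$ is independent of the chosen representative $\tau$, and second that the resulting function is monotone and compatible with covers. The value $\rho(\hat 0) = -1$ is fixed by fiat, so only the formula on genuine morphisms is at issue. For well-definedness I would mirror the proof that the invariant $I$ is well-defined: since the equivalence relation on zigzags is generated by the horizontal and vertical relations, it suffices to check that neither sum in the formula for $\rho$ changes under a generating move.

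For the horizontal relations, the generating move deletes a left-pointing identity $f_i = \id$ and merges the adjacent right-pointing arrows into $g_{i+1} \circ g_i$ (exactly the situation treated in the $I$-lemma). Because $r(\id) = 0$ and $r$ is additive under composition by \eqref{eq:r_concat}, both $\sum r(f_i)$ and $\sum r(g_i)$ are unchanged, so $\rho$ is preserved. For the vertical relations I would exploit the commutativity of the squares in the defining ladder. Writing $u_i$ and $v_i$ for the verticals at the $y$- and $x$-nodes (all in $\Sigma^{+}$), commutativity together with additivity of $r$ yields $r(g_i') = r(g_i) + r(u_i) - r(v_i)$ and $r(f_i') = r(f_i) + r(u_i) - r(v_{i+1})$. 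Substituting these into the formula for $\rho$ and telescoping, the $u_i$ and the interior $v_i$ cancel, and the difference between the two rows collapses to $r(v_0) - r(u_k)$. Since the outermost verticals are $\id_w$ and $\id_z$, this difference vanishes, so $\rho$ is preserved vertically as well.

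For the rank-function conditions, the key step is a dimension telescoping along the zigzag. Each $g_i$ is a strictly descending sequence from $x_i$ to $y_i$, so $r(g_i) \le \dim x_i - \dim y_i$, while each $f_i \in \Sigma^{+}$ satisfies $\dim x_{i+1} - \dim y_i = r(f_i)$ exactly. Summing the dimension differences across all nodes and comparing with $N = \dim w - \dim z$ rewrites the rank as $\rho(x) = \sum_i \bigl( (\dim x_i - \dim y_i) - r(g_i) \bigr)$, a sum of nonnegative terms; in particular $\rho(x) \ge 0 > -1 = \rho(\hat 0)$ for every genuine morphism. This reformulation also explains why we passed to the opposite poset: $\rho$ drops as the sequences $g_i$ grow, so it is order-reversing on $\Hom(w,z)$ and order-preserving on $\Hom(w,z)^{\op}$. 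Concretely, a strict relation $x >_{P_{w,z}} y$ between genuine morphisms unwinds to $x \Rightarrow y$ in $\Hom(w,z)$, and \autoref{thm:flo_po_desc} lets me represent both by a common skeleton in which $y$ arises from $x$ by enlarging the $g_i$; then $\rho(x) - \rho(y)$ equals the total number of cells added, which is strictly positive. Together with the bound against $\hat 0$ this gives condition~(1).

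Condition~(2) on covers splits into two cases. If $x$ covers $\hat 0$, then $x$ is a maximal element of $\Hom(w,z)$, so by \autoref{thm:flo_po_desc} no $g_i$ admits a refinement by an inserted cell; using that the face poset of a regular CW complex is graded by dimension (indeed $\Fac^{+}[X]$ is a CW poset), every unrefinable descending sequence from $x_i$ to $y_i$ has length exactly $\dim x_i - \dim y_i$, so each term of the reformulated sum is zero and $\rho(x) = 0 = \rho(\hat 0) + 1$. If instead $x$ covers a genuine morphism $y$, the total number of cells added in passing from $x$ to $y$ must be exactly one: were it at least two, inserting a single one of the new cells into the appropriate $g_i$ would, again via \autoref{thm:flo_po_desc}, produce a morphism strictly between $x$ and $y$. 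Hence $\rho(x) = \rho(y) + 1$. I expect the main obstacle to be precisely this cover analysis, and especially the maximal-element computation, since both rest on the grading of the face poset of a regular CW complex — that a dimension drop of at least two can always be refined by an intermediate cell — which is exactly where regularity of $X$ is indispensable.
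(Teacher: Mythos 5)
Your proposal is correct and follows essentially the same route as the paper's proof: well-definedness via additivity of $r$ under the horizontal and vertical generating moves (with the telescoping boundary terms killed by $v_0 = \id_w$, $u_k = \id_z$), and the rank-function conditions via \autoref{thm:flo_po_desc} together with the dimension bound $r(g_i) \le \dim x_i - \dim y_i$ and $r(f_i) = \dim x_{i+1} - \dim y_i$. Your only departures are cosmetic but welcome: rewriting $\rho(x)$ as the sum of nonnegative local defects $\sum_i \bigl((\dim x_i - \dim y_i) - r(g_i)\bigr)$, and explicitly justifying (via regularity of $X$) the paper's unproved claim that maximal elements of $\Hom_{\Ent[X]}(x_i,y_i)$ are full flags.
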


\begin{proof}
    First, to prove well-definedness of $\rho$, we need to show  that equivalent representations give the same value of $\rho$. To do this, we first show that horizontally related representations give the same value of $\rho$, and then we show that vertically related representations give the same value of $\rho$.
    
    \begin{enumerate}
        \item Let $\tau$ and $\tau'$ be horizontally related representations. Then $\tau$ can be transformed to $\tau'$ by making substitutions of the form
        \begin{equation*}
            \left( x_i \xrightarrow{g_i} y_i \xleftarrow{\id_{y_i}} y_i \xrightarrow{g_{i+1}} y_{i+1} \right) \leftrightarrow \left( x_i \xrightarrow{g_{i+1} \circ g_i} y_{i+1} \right).
        \end{equation*}
        As $r(g_{i+1} \circ g_i) = r(g_{i+1}) + r(g_i)$ and $r(\id_{y_i}) = 0$, these substitutions preserve the value of $\rho$, so both $\tau$ and $\tau'$ give the same value of $\rho$.
        
        \item Let $\tau$ and $\tau'$ be vertically related representations. Then they form the bottom and top rows of a commutative diagram:

        \begin{center}
        \begin{tikzpicture}
            \diagram{d}{3em}{3em}{
                w & y_0' & x_1' & y_1' & \cdots & x_k' & z \\
                w & y_0 & x_1 & y_1 & \cdots & x_k & z \\
            };
            
            \path[->,font = \scriptsize, midway]
            (d-1-1) edge node[above]{$g_0'$} (d-1-2)
            (d-1-3) edge node[above]{$f_0'$} (d-1-2)
            (d-1-3) edge node[above]{$g_1'$} (d-1-4)
            (d-1-5) edge node[above]{$f_1'$} (d-1-4)
            (d-1-6) edge node[above]{$f_{k-1}'$} (d-1-5)
            (d-1-6) edge node[above]{$g_k'$} (d-1-7);
    
            \path[->,font = \scriptsize, midway]
            (d-2-1) edge node[below]{$g_0$} (d-2-2)
            (d-2-3) edge node[below]{$f_0$} (d-2-2)
            (d-2-3) edge node[below]{$g_1$} (d-2-4)
            (d-2-5) edge node[below]{$f_1$} (d-2-4)
            (d-2-6) edge node[below]{$f_{k-1}$} (d-2-5)
            (d-2-6) edge node[below]{$g_k$} (d-2-7);
    
            \path[->,font = \scriptsize, midway]
            (d-1-2) edge node[left]{$u_0$} (d-2-2)
            (d-1-3) edge node[left]{$v_1$} (d-2-3)
            (d-1-4) edge node[left]{$u_1$} (d-2-4)
            (d-1-6) edge node[left]{$v_k$} (d-2-6);
            \draw[double] (d-1-1) -- (d-2-1);
            \draw[double] (d-1-7) -- (d-2-7);
    
            \draw[double] (d-2-1) -- (d-1-2);
            \draw[double] (d-2-3) -- (d-1-2);
            \draw[double] (d-2-3) -- (d-1-4);
            \draw[double] (d-2-5) -- (d-1-4);
            \draw[double] (d-2-6) -- (d-1-5);
            \draw[double] (d-2-6) -- (d-1-7);
        \end{tikzpicture}
        \end{center}

        We adopt the notation that $v_0 = \id_w$ and $u_k = \id_z$. Then $r(v_0) = r(u_k) = 0$. Now, commutativity of the diagram gives that $r(u_i) + r(f_i') = r(u_i \circ f_i') = r(f_i \circ v_{i+1}) = r(f_i) + r(v_{i+1})$. Similarly, $r(u_i) + r(g_i') = r(g_i) + r(v_i)$. Applying this, we get
        \begin{align*}
            N &+ \sum_{i=0}^{k-1} r(f_i') - \sum_{i=0}^{k} r(g_i') \\
            &= N + \sum_{i=0}^{k-1} \left(r(f_i) + r(u_i) - r(v_{i+1})\right) - \sum_{i=0}^{k} \left(r(g_i) + r(u_i) - r(v_i)\right) \\
            &= N + r(u_k) - r(v_0) + \sum_{i=0}^{k-1} r(f_i) - \sum_{i=0}^{k} r(g_i) \\
            &= N + \sum_{i=0}^{k-1} r(f_i) - \sum_{i=0}^{k} r(g_i). \\
        \end{align*}
        Thus, $\tau$ and $\tau'$ give the same value of $\rho$.
    \end{enumerate}

    Now, we prove that $\rho$ is a rank function. To prove this, we must prove that it satisfies the two conditions in \autoref{def:graded_poset}.
    \begin{enumerate}
        \item We prove that $\rho$ is compatible with the partial order.

        First, let $x$ and $y$ be elements in $P_{w,z}$ different from $\hat 0$ such that $x > y$. Then $x \Rightarrow y$ when considered as elements of $\Hom(w, z)$. We can thus pick representatives $\tau$ and $\tau'$ as in \autoref{thm:flo_po_desc}, and we get that
        \begin{equation*}
            \rho(x) - \rho(y) = \sum_{i=0}^{k} r(g_i') - \sum_{i=0}^{k} r(g_i) = \sum_{i=0}^{k} \left(r(g_i') - r(g_i)\right) > 0,
        \end{equation*}
        where the last inequality follows from the fact that $g_i \Rightarrow g_i'$ and that $g_i \ne g_i'$ for at least one $i$ (otherwise, $x$ would equal $y$).

        Now, we must show that $\rho(x) > \rho(\hat 0)$ for $x \ne 0$. For this, let 
        \begin{equation*}
            \tau = \left(w = x_0 \xrightarrow{g_0} y_0 \xleftarrow{f_0} x_1 \xrightarrow{g_1} \cdots \ \xleftarrow{f_{k-1}} x_k \xrightarrow{g_k} y_k = z\right)
        \end{equation*}
        be a representative of $x$. Observe that $r(g_i) \le \dim x_i - \dim y_i$. Furthermore, $r(f_i) = \dim x_{i+1} - \dim y_i$ (as $\dim x_{i+1} - \dim y_i$ is either 0 or 1, due to the fact that the Morse system $\Sigma$ consists of regular pairs of a discrete Morse function). Hence,
        \begin{equation}\label{eq:rho_inequality}
            \begin{aligned}
                \rho(x) = &N + \sum_{i=0}^{k-1} r(f_i) - \sum_{i=0}^{k} r(g_i) \\
                \ge &N + \sum_{i=0}^{k-1} \left(\dim x_{i+1} - \dim y_i\right) - \sum_{i=0}^{k} \left(\dim x_i - \dim y_i\right) \\
                = &N + \dim y_k - \dim x_0 = N + \dim z - \dim w = 0.
            \end{aligned}
        \end{equation}
        Thus, $\rho(x) \ge 0 > -1 = \rho(\hat 0)$, as desired.
        
        \item We prove that $\rho$ is compatible with the covering relation.

        First, let $x$ and $y$ be elements in $P_{w,z}$ different from $\hat 0$ such that $x$ covers $y$. We again choose representatives $\tau$ and $\tau'$ as in \autoref{thm:flo_po_desc}. It follows that $g_i = g_i'$ for all but one $i$, or else we could choose a representative $\gamma$ so that $x > [\gamma] > y$. Similarly, it follows that for this $i$, $g_i'$ covers $g_i$, or else we could find a $\hat g$ such that $g_i' > \hat g > g_i$, which again would let us choose a representative $\gamma$ so that $x > [\gamma] > y$. As $g_i'$ covers $g_i$, we have that $r(g_i') = r(g_i) + 1$, and thus
        \begin{equation*}
            \rho(x) - \rho(y) = \sum_{i=0}^{k} r(g_i') - \sum_{i=0}^{k} r(g_i) = 1,
        \end{equation*}
        as desired.

        Now, we must show that if $x$ covers $\hat 0$, then $\rho(x) = \rho(\hat 0) + 1 = 0$. An element $x$ covers $\hat 0$ if and only if there are no $z \in \Hom(w, z)$ such that $x \Rightarrow z$.
        Let the following be a representative of such a $x$.
        \begin{equation*}
            \tau = \left(w = x_0 \xrightarrow{g_0} y_0 \xleftarrow{f_0} x_1 \xrightarrow{g_1} \cdots \ \xleftarrow{f_{k-1}} x_k \xrightarrow{g_k} y_k = z\right)
        \end{equation*}
        Then, all $g_i$ must be maximal in $\Hom_{\Ent[X]}(x_i, y_i)$, or else we could replace this $g_i$ to get a representative $\tau'$ with $[\tau] \Rightarrow [\tau']$. It is easy maximal elements in $\Hom_{\Ent[X]}(x_i, y_i)$ are sequences on the form $(x_i = a_0 > a_1 > \dots > a_m = y_i)$ with $m = \dim x_i - \dim y_i$, and hence $r(g_i) = \dim x_i - \dim y_i$. We get that the inequality in \eqref{eq:rho_inequality} is an equality in this case, and $\rho(x) = 0$, as desired.\qedhere
    \end{enumerate}
\end{proof}




%
%

\subsection{Computing the Hom posets}

In this section, we describe an algorithm for computing the Hom posets in the discrete flow category. We still consider the case of a Morse system induced by a discrete Morse function on a regular CW complex, and use the same notation as the previous section. We first state some results that are needed for the algorithm.

First, we prove a useful lemma. Recall that the regular pairs of a discrete Morse function are the pairs of simplices $(x, y)$, where $x \in X^k$ and $y \in X^{k+1}$, such that $x < y$ and $f(x) \ge f(y)$ (the set of these is the induced gradient vector field, $V_f$). The following lemma essentially tells us that we can always assume, without loss of generality, that this inequality is an equality for all regular pairs.

\begin{lemma}\label{lemma:dmf_equality}
    Let $f \colon X \to \R$ be a discrete Morse function. There exists a Forman equivalent discrete Morse function $\tilde f$ such that $\tilde f(x) = \tilde f(y)$ for each regular pair $(x, y)$ of $f$.
\end{lemma}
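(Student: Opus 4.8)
The goal is to modify the values of the given discrete Morse function $f$ so that every regular pair is forced into equality, without disturbing which pairs are regular (i.e. preserving the gradient vector field $V_f$). The plan is to show that the inequality $f(x) \ge f(y)$ in each regular pair can always be tightened to an equality by adjusting values cell-by-cell, and that doing so keeps the Morse conditions intact.

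First I would recall the structure: the regular pairs $V_f = \{(x,y)\}$ are mutually disjoint (each cell belongs to at most one pair), and all other cell-relations satisfy strict inequalities dictated by the Morse condition. This disjointness is the key structural fact — it means I can treat each pair independently. For each regular pair $(x, y)$ with $x \subsetneq y$, $\dim y = \dim x + 1$, and $f(x) \ge f(y)$, the idea is to define $\tilde f(y) = f(x)$ (or symmetrically lower $f(x)$ to $f(y)$), leaving all critical cells and all unpaired cells with their original values. I would then argue that this reassignment produces a valid discrete Morse function with the same gradient vector field.

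The main verification is that $\tilde f$ still satisfies the two Morse conditions at every cell, and that no new regular pairs are created (nor any destroyed). For this I would use the observation quoted earlier in the paper: for a discrete Morse function, if $x \subseteq y$ with $\dim y > \dim x + 1$, then $f(y) > f(x)$ strictly, and the exclusion lemma guarantees that at each cell at most one of the two Morse conditions can fail. The delicate point is to check that raising $\tilde f(y)$ to equal $f(x)$ does not cause $y$ to violate its condition with respect to its \emph{other} faces or cofaces — i.e. that the small perturbation stays within the ``gap'' separating $f(x)$ from the neighbouring values. I expect the cleanest way to guarantee this is to first perturb $f$ to a Forman-equivalent \emph{injective} Morse function (as is done later in the proof of \autoref{thm:flo_po_desc}), so that the non-pair inequalities are all strict with room to spare, and then for each regular pair set both values to a common value chosen inside the appropriate gap. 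Processing pairs in order of dimension, or simply exploiting disjointness, lets me make these choices consistently.

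The step I expect to be the main obstacle is precisely this consistency check: ensuring that after equalizing one pair, the chosen common value does not collide with or invert the required inequalities involving the neighbouring cells of $x$ and $y$ (other cofaces of $x$, other faces of $y$), and that the result remains Forman equivalent to $f$. Disjointness of the pairs and the strictness of the surviving inequalities should make this a matter of choosing the equalized value in a sufficiently narrow interval, but the bookkeeping — confirming that every one of the finitely many comparisons still holds with the correct (strict or non-strict) direction — is where the real work lies. I would organize this by defining $\tilde f$ explicitly via the pairing and then verifying the two conditions of the discrete Morse function definition at an arbitrary cell, splitting into cases according to whether the cell is critical, the lower element of a pair, or the upper element of a pair.
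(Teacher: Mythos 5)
Your construction is essentially the paper's --- the paper equalizes each regular pair by lowering the value of the lower cell (setting $\tilde f(x) := f(y)$), you by raising the upper one (setting $\tilde f(y) := f(x)$) --- and you name the right ingredients: disjointness of the regular pairs and the fact that $u \subseteq v$ with $\dim v > \dim u + 1$ forces $f(v) > f(u)$. But the proposal stops exactly where the content of the lemma lies: you never verify that the reassignment creates no new regular pairs, i.e.\ that $V_{\tilde f} = V_f$; you explicitly defer this as ``where the real work lies.'' That verification is the whole proof, so as written this is a plan, not an argument. Moreover, your fallback strategy --- first replacing $f$ by an injective Forman-equivalent function and then choosing each pair's common value inside a ``gap,'' processing pairs by dimension --- is both unnecessary and unresolved: the admissible interval for one pair's common value depends on the values of neighbouring cells that are themselves members of pairs being changed, and you never resolve this circularity.

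The irony is that your original definition already works, with no perturbation, in a few lines using exactly the two facts you cite. Define $\tilde f(y) = f(x)$ on upper members of pairs and $\tilde f = f$ elsewhere; then $\tilde f \ge f$ pointwise. The inclusion $V_f \subseteq V_{\tilde f}$ is immediate, since for a pair $(x,y)$ disjointness gives $\tilde f(x) = f(x) = \tilde f(y)$. Conversely, suppose $w < v$ with $\dim v = \dim w + 1$ and $\tilde f(w) \ge \tilde f(v)$. If $w$ is not the upper member of a pair, then $f(w) = \tilde f(w) \ge \tilde f(v) \ge f(v)$, so $(w,v) \in V_f$ already. If $w$ is the upper member of a pair $(u,w) \in V_f$, then $u \subseteq v$ with $\dim v = \dim u + 2$, so $f(v) > f(u)$, whence $\tilde f(v) \ge f(v) > f(u) = \tilde f(w)$, a contradiction; so this case cannot occur. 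Since $V_{\tilde f} = V_f$ and the pairs of $V_f$ are mutually disjoint, $\tilde f$ satisfies both Morse conditions and is Forman equivalent to $f$. This is the same shape as the paper's own argument, which performs the mirror-image check for the lowering convention (there the contradiction is reached via the existence of a second cell $w \ne y$ with $x < w < z$ in a regular CW complex). Completing your proposal means supplying this verification rather than the injectivity detour.
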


\begin{proof}
    Define $\tilde f \colon X \to \R$ as follows.
    \begin{equation*}
        \tilde f(x) =
        \begin{cases}
            f(y), &\quad\text{if } (x, y) \in V_f \text{ for some } y, \\ 
            f(x), &\quad \text{ otherwise.} \\
        \end{cases}
    \end{equation*}
    
    It's clear that $\tilde f(x) = \tilde f(y)$ for all regular pairs of $f$, so it remains to show that $\tilde f$ is a discrete Morse function, and that $\tilde f$ and $f$ are Forman equivalent. We prove both of these things by showing that their induced gradient vector fields, $V_{\tilde f}$ and $V_f$ are the same.

    Clearly, $V_f \subseteq V_{\tilde f}$. We need to show that $V_{\tilde f} \subseteq V_f$. First, observe that $\tilde f(x) \le f(x)$ for all $x$.
    
    Now, suppose $(x, y) \in V_{\tilde f}$. Then $x < y$ and $\tilde f(x) \ge \tilde f(y)$. If $\tilde f(y) = f(y)$, then $f(y) = \tilde f(y) \le \tilde f(x) \le f(x)$, so $(x, y) \in V_f$. Suppose $\tilde f(y) \ne f(y)$. Then $(y, z) \in V_f$ for some $z$, and $\tilde f(y) = f(z)$. Let $w \ne y$ be such that $x < w < z$. If $(x, w)$ is a regular pair, then $\tilde f(x) = f(w) < f(z) = \tilde f(y)$, which is a contradiction. If $(x, w)$ is not a regular pair, then $\tilde f(x) \le f(x) < f(w) < f(z) = \tilde f(y)$; also a contradiction. Hence, we cannot have that $f(y) \ne \tilde f(y)$, which completes the proof.
\end{proof}

\begin{theorem}\label{thm:computing_representation}
    A morphism $[\gamma] \in \Hom(w, z)$ is represented by a unique sequence $(w = x_0, x_1, \dots, x_k = z)$ of cells, such that 
    \begin{itemize}
        \item no two elements in the sequence are equal, and
        \item for all $i < k$, either $x_{i+1} < x_i$ or $\{x_i, x_{i+1}\}$ is a regular pair.
    \end{itemize}
\end{theorem}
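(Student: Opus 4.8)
The plan is to set up a dictionary between the zigzag representatives of Section~\ref{sec:flo} and the cell sequences in the statement, and then prove existence and uniqueness separately. Given a zigzag $\tau = (w = x_0 \xrightarrow{g_0} y_0 \xleftarrow{f_0} \cdots \xleftarrow{f_{k-1}} x_k \xrightarrow{g_k} y_k = z)$ in which every $f_i \in \Sigma$ is a genuine regular pair (such representatives are obtained from an arbitrary one by deleting the identity maps $f_i$ via horizontal relations), I would \emph{flatten} it to the sequence of cells obtained by listing the vertices of $g_0$, then $g_1$, and so on, where the last vertex $y_i$ of $g_i$ and the first vertex $x_{i+1}$ of $g_{i+1}$ are joined by the step $f_i$. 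In the resulting sequence every consecutive pair is either a step $x > x'$ of some $g_i$ (so $x' < x$) or a step $y_i \to x_{i+1}$ coming from an $f_i$ (so $\{y_i, x_{i+1}\}$ is a regular pair). Conversely, a sequence with the two stated properties is cut at its upward steps (those consecutive pairs $x_i < x_{i+1}$, which are necessarily regular pairs) into maximal descending runs, and these runs together with the upward steps reassemble into a zigzag. Thus the content of the theorem is that, among all such sequences, exactly one represents a given $[\gamma]$.

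\emph{Existence.} Starting from a flattened representative as above, the only thing that can go wrong is a repeated cell. To rule this out I would first normalize the representative so that no descending run $g_i$ contains a consecutive step that is itself a regular pair: any such step is a morphism of $\Sigma$, hence invertible in $\Loc_{\Sigma}[X]$, and can be pulled out of $g_i$ and absorbed into the $\Sigma$-structure of the zigzag using the vertical relations. After this normalization every downward step is a non-regular face relation and every upward step is a regular pair. Assuming, as in \autoref{thm:flo_po_desc}, that $f$ is injective on cells, a non-regular face relation $x' < x$ has $f(x') < f(x)$, while an upward step along a regular pair $\{y_i, x_{i+1}\}$ has $f(x_{i+1}) \le f(y_i)$; in either case $f$ strictly decreases along the sequence. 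A strictly decreasing sequence of values cannot repeat a cell, so the flattened sequence has no repeats, giving the desired representative.

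\emph{Uniqueness.} Here I would use the algebraic invariant $I$ of \autoref{def:algebraic_invariant}, evaluated on a flattened representative $S$. By construction $I([\gamma]) = \sum_i \alpha(g_i) - \sum_i f_i$ is exactly the signed sum of the consecutive atoms of $S$: each downward step $x > x'$ contributes the atom $(x > x')$ with coefficient $+1$, and each upward step contributes its regular pair with coefficient $-1$. Since $S$ has no repeated cells, each cell, and hence each atom, occurs at most once, and a downward and an upward traversal of the same pair cannot both occur; therefore the coefficient of every atom in $I([\gamma])$ lies in $\{-1, 0, +1\}$ and records whether that atom is a downward step, an upward step, or absent. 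As $I$ is an invariant of $[\gamma]$, any two sequences representing $[\gamma]$ determine the same collection of signed atoms, i.e.\ the same set of directed edges on the cells. These edges form a simple directed path from $w$ to $z$, which is uniquely reconstructed from its edge set; hence the two sequences coincide.

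The main obstacle is the normalization step in the existence proof: extracting a regular-pair step from inside a $g_i$ and rewriting it as part of the $\Sigma$-structure while staying in the same equivalence class and ensuring the process terminates. This is precisely the kind of manipulation carried out case-by-case in the proof of \autoref{thm:flo_po_desc}, and the same $\beta$-ordering, with $\beta((x>y)) = (\dim x, f(x), f(y))$, that controls the order of atoms there is what I expect to guarantee both termination and that no repeats can reappear. Everything else—the flattening dictionary and the uniqueness argument via $I$—is essentially bookkeeping.
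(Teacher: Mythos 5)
Your dictionary between zigzag representatives and cell sequences, and your uniqueness argument via the invariant $I$ of \autoref{def:algebraic_invariant}, are sound and agree in substance with the paper's proof (the paper reads the sequence off $I$ step by step starting from $w$, rather than reconstructing a simple directed path from its signed edge set, but it is the same idea). The existence half, however, has a genuine gap: the normalization you rely on is impossible. A regular-pair step $(u > v) \in \Sigma$ sitting \emph{inside} a descending run $g_i$ is a \emph{forward} (downward) use of that $\Sigma$-morphism, whereas the left-pointing arrows of a zigzag are \emph{inverted} uses: the zigzag $\cdots \xrightarrow{g_j} y_j \xleftarrow{f_j} x_{j+1} \xrightarrow{g_{j+1}} \cdots$ composes to $\cdots \circ [g_{j+1}] \circ [f_j]^{-1} \circ [g_j] \circ \cdots$ in $\Loc_{\Sigma}[X]$. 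So ``pulling $(u>v)$ out of $g_i$ and absorbing it into the $\Sigma$-structure'' would replace a factor $[(u>v)]$ by $[(u>v)]^{-1}$ and change the morphism. Your own invariant certifies the obstruction: a downward traversal of $(u>v)$ contributes $+1$ to the coefficient of that atom in $I$, a left-pointing arrow contributes $-1$, and $I$ is constant on equivalence classes, so no sequence of horizontal and vertical relations can trade one for the other. Concretely, in \autoref{ex:D3_flo} the morphism $[f > b > z > x]$ has $I = (f>b) + (b>z) + (z>x)$; since the regular-pair atom $(b>z)$ has coefficient $+1$, \emph{every} representative of this morphism contains a downward traversal of that pair, and your normalized representative simply does not exist. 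Consequently the strict-decrease argument breaks exactly there: along a downward regular step $u \to v$ one has $f(v) \ge f(u)$, so $f$ need not decrease and repeated cells are not excluded.

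The fix is not to eliminate downward regular steps---they are perfectly admissible in the statement, falling under the alternative $x_{i+1} < x_i$---but to handle repetitions directly, which is what the paper does. By \autoref{lemma:dmf_equality} one may replace $f$ by a Forman-equivalent $\tilde f$ that is \emph{constant} on each regular pair; then $\tilde f$ is non-increasing (not strictly decreasing) along the flattened sequence, and since $\tilde f$ is injective away from regular pairs, a repeated cell $x'$ can only enclose its regular partner $y'$, so any repetition has the form $(\dots, x', y', x', \dots)$. Such a detour is exactly a composite $[s]^{-1}[s]$ or $[s][s]^{-1}$ with $s \in \Sigma$ and cancels by \cite[Remark 2.9]{Nanda}, yielding the repeat-free representative. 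With this replacement your existence argument goes through; your uniqueness half needs no change.
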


It is clear that such a sequence always defines a morphism in $\Hom(w, z)$, so this tells us that we can count the morphisms in $\Hom(w, z)$ by counting the simple paths from $w$ to $z$ in a directed graph with face relations and regular pairs as edges.

\begin{proof}
    We get a sequence representation from a representation
    \begin{equation*}
        \gamma = \left(x_0 \xrightarrow{g_0} y_0 \xleftarrow{f_0} x_1 \xrightarrow{g_1} \cdots \ \xleftarrow{f_{k-1}} x_k \xrightarrow{g_k} y_k\right)
    \end{equation*}
    by concatenating all the $g_i$'s. For example,
    \begin{equation*}
        \left(x_0 \xrightarrow{g_0 = (x_0 > x_0^1 > y_0)} y_0 \xleftarrow{f_0 = (x_1 > y_0)} x_1 \xrightarrow{g_1 = (x_1 > x_1^1 > x_1^2 > y_1)} y_1\right)
    \end{equation*}
    becomes
    \begin{equation*}
        (x_0, x_0^1, y_0, x_1, x_1^1, x_1^2, y_1).
    \end{equation*}
    To get a sequence without repeating elements, we choose an appropriate representation. To see that this is possible, first observe that we can eliminate successive repeating elements by removing intermediate identity maps (horizontal relation). Furthermore, by \autoref{lemma:dmf_equality}, given a sequence $(x_0, \dots, x_k)$, there is a discrete Morse function $\tilde f$ such that $\tilde f(x_0) \ge \tilde f(x_1) \ge \dots \ge \tilde f(x_k)$. Thus, if a cell $x'$ repeats in the sequence, the only other cell that can be between the two $x'$ is $y'$ where $\{x', y'\}$ form a regular pair. Now, we can simplify the representation, as per \cite[Remark 2.9]{Nanda}, to turn $(\dots, x_i, x', y', x', x_{i+4}, \dots)$ into $(\dots, x_{i}, x', x_{i+4}, \dots)$.

    It remains to show that for a given morphism, this sequence representation is unique. For this, we show that the sequence representation is decided by the algebraic invariant $I$, as defined in \autoref{def:algebraic_invariant}.

    Let $(w = x_0, x_1, \dots, x_k = z)$ and $(w = y_0, y_1, \dots, y_l = z)$ be sequence representations of $[\tau]$ and $[\sigma]$, such that $I([\tau]) = I([\sigma])$. Now, as $x_0$ appears only once in the first sequence, the coefficient before the atom $(x_0 > x_1)$ (or $(x_1 > x_0)$) in $I([\tau])$ is nonzero. Hence, as $y_0 = x_0$ appears only once in the second sequence, we must have $y_1 = x_1$ for $I([\sigma])$ to equal $I([\tau])$. Similarly, we must have $y_2 = x_2$, and so on, so the sequences must be equal in all places. This completes the proof of uniqueness.
\end{proof}

For the following theorem, recall the definitions of $r$ \eqref{eq:r_def} and $\rho$ (\autoref{def:P_rank}) from the previous section.

\begin{theorem}\label{thm:computing_rank}
    Let $[\gamma] \in \Hom(w, z)$ be a morphism represented by the sequence $(x_0, \dots, x_k)$. Let $I$ be the set of indices $i \in \{0, \dots, k-1\}$ such that $x_i > x_{i+1}$. Then,
    \begin{equation*}
        \rho([\gamma]) = \sum_{i \in I} \left(\dim x_i - \dim x_{i+1} - 1 \right)
    \end{equation*}
\end{theorem}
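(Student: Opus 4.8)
The plan is to exploit that $\rho$ is well-defined on equivalence classes (established just above), so that $\rho([\gamma])$ may be evaluated on whichever zigzag representative is most convenient. I will use the representative that produces the sequence $(x_0, \dots, x_k)$ under the concatenation procedure from the proof of \autoref{thm:computing_representation}: write it as
\begin{equation*}
    w = \bar x_0 \xrightarrow{g_0} \bar y_0 \xleftarrow{f_0} \bar x_1 \xrightarrow{g_1} \cdots \ \xleftarrow{f_{m-1}} \bar x_m \xrightarrow{g_m} \bar y_m = z,
\end{equation*}
where concatenating the descending sequences $g_0, \dots, g_m$ reproduces $(x_0, \dots, x_k)$, and each arrow $\bar y_j \xleftarrow{f_j} \bar x_{j+1}$ records one ascending step of the sequence. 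By \autoref{thm:computing_representation} every ascending step is a regular pair, hence of codimension one, so each $f_j$ is a non-identity atom with $r(f_j) = 1 = \dim \bar x_{j+1} - \dim \bar y_j$; dually, the descending steps of the sequence are exactly the individual steps of the various $g_j$.

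The key computation is to rewrite the defining formula for $\rho$ as a count of ``skipped'' cells. Setting $N = \dim w - \dim z = \dim \bar x_0 - \dim \bar y_m$ and substituting $r(f_j) = \dim \bar x_{j+1} - \dim \bar y_j$, the quantity $N + \sum_j r(f_j)$ telescopes to $\sum_j (\dim \bar x_j - \dim \bar y_j)$. Hence
\begin{equation*}
    \rho([\gamma]) = \sum_{j=0}^{m} \bigl( (\dim \bar x_j - \dim \bar y_j) - r(g_j) \bigr).
\end{equation*}
Fixing a single $g_j = (\bar x_j = a_0 > a_1 > \dots > a_{r(g_j)} = \bar y_j)$, its total dimension drop $\dim \bar x_j - \dim \bar y_j$ telescopes over its $r(g_j)$ constituent steps, so $(\dim \bar x_j - \dim \bar y_j) - r(g_j) = \sum_t (\dim a_t - \dim a_{t+1} - 1)$, which is precisely the number of dimensions skipped inside $g_j$.

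Finally I sum over $j$. Since the descending steps of $(x_0, \dots, x_k)$ — indexed exactly by $I$ — are the union of the constituent steps of all the $g_j$, the previous display rearranges into $\rho([\gamma]) = \sum_{i \in I}(\dim x_i - \dim x_{i+1} - 1)$, as claimed; the ascending steps contribute nothing because they are not indexed by $I$. The only genuine obstacle is the bookkeeping that matches steps of the zigzag to steps of the sequence and confirms that every $f_j$ is a codimension-one regular pair (so that $r(f_j) = 1$ and the telescoping closes). Identity maps $g_j$, which the disjointness of regular pairs forces to occur only at the two ends, are harmless: each contributes $0 = (\dim \bar x_j - \dim \bar y_j) - r(g_j)$.
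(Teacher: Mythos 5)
Your proposal is correct and follows essentially the same route as the paper's proof: both evaluate $\rho$ on the zigzag representative whose descending runs concatenate to $(x_0,\dots,x_k)$, use that each left-pointing arrow is a regular pair so that $r(f_j) = \dim \bar x_{j+1} - \dim \bar y_j$, and then telescope the dimension drops within each descending run to match the sum over $I$. The only difference is cosmetic — the paper runs the chain of equalities from $\sum_{i\in I}(\dim x_i - \dim x_{i+1} - 1)$ to $\rho([\gamma])$ while you run it in the opposite direction (and add a harmless remark about identity $g_j$'s) — so this is the same argument.
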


\begin{proof}
    The morphism $[\gamma]$ is represented by
    \begin{equation*}
        \gamma = \left(x_{j_0} \xrightarrow{g_0} x_{j_1-1} \xleftarrow{f_0} x_{j_1} \xrightarrow{g_1} \cdots \ \xleftarrow{f_{k-1}} x_{j_m} \xrightarrow{g_k} x_{j_{m+1}-1}\right)
    \end{equation*}
    for some sequence $(j_0, \dots, j_{m+1})$ (with $j_0 = 0$ and $j_{m+1}=k+1$). Then,
    \begin{align*}
        \sum_{i \in I} & \left(\dim x_i - \dim x_{i+1} - 1 \right)
        = \sum_{i=0}^{m} \sum_{l=j_i}^{j_{i+1}-2} \left(\dim x_l - \dim x_{l+1} - 1 \right) \\
        &= \sum_{i=0}^{m} \left(\dim x_{j_i} - \dim x_{j_{i+1}-1} - (j_{i+1} - j_i - 1) \right) \\
        &= \dim x_{j_0} - \dim x_{j_{m+1}-1} + \sum_{i=1}^{m} \left(\dim x_{j_i} - \dim x_{j_i-1}\right) - \sum_{i=0}^{m} \left(j_{i+1} - j_i - 1 \right) \\
        &= \dim x_0 - \dim x_k + \sum_{i=0}^{m-1} r(f_i) - \sum_{i=0}^{m} r(g_i) = \rho([\gamma]).\qedhere
    \end{align*}
\end{proof}

\begin{theorem}\label{thm:computing_covering}
    Let $[\gamma]$ and $[\tau]$ be morphisms in $\Hom(w, z)$, such that $\rho([\tau]) = \rho([\gamma]) + 1$. Then, $[\tau]$ covers $[\gamma]$ if and only if the sequence representation of $[\gamma]$ equals the sequence representation of $[\tau]$ with exactly one element added or exactly one element removed.
\end{theorem}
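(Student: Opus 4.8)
The plan is to work inside the poset $P_{w,z}$ and to first reduce the covering statement to a comparability statement, using that $\rho$ (\autoref{def:P_rank}) is a rank function on $P_{w,z}$. Since neither $[\tau]$ nor $[\gamma]$ equals $\hat 0$, "$[\tau]$ covers $[\gamma]$" is to be read as $[\tau] > [\gamma]$ in $P_{w,z}$ with nothing strictly between, and because $P_{w,z}$ carries the opposite order this is the same as $[\tau] \Rightarrow [\gamma]$ in $\Hom(w,z)$. My first step is to observe that, because $\rho$ is a rank function and $\rho([\tau]) = \rho([\gamma]) + 1$, the covering relation is already equivalent to mere comparability: if there were an $m$ with $[\tau] > m > [\gamma]$ in $P_{w,z}$, then $\rho([\tau]) > \rho(m) > \rho([\gamma])$, which is impossible for integers differing by $1$. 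Hence it suffices to prove that $[\tau] \Rightarrow [\gamma]$ holds if and only if the two sequence representations differ by exactly one element.

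For the forward direction I would assume $[\tau] \Rightarrow [\gamma]$ and invoke \autoref{thm:flo_po_desc} to choose zigzag representatives of $[\tau]$ and $[\gamma]$ sharing the same backbone $(x_i, y_i, f_i)$ with right-pointing arrows satisfying $g_i \Rightarrow g_i'$, i.e.\ each $g_i$ a subsequence of $g_i'$. Concatenating the right-pointing arrows as in \autoref{thm:computing_representation} then exhibits the sequence representation of $[\gamma]$ as that of $[\tau]$ with the cells of the $g_i'$ not in $g_i$ inserted into the corresponding descending runs; deleting those cells again produces a repeat-free valid sequence representing $[\tau]$, which by the uniqueness in \autoref{thm:computing_representation} is the canonical representation of $[\tau]$. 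By \autoref{thm:computing_rank} each inserted cell lowers $\rho$ by exactly $1$, so the number of inserted cells equals $\rho([\tau]) - \rho([\gamma]) = 1$, and the two representations differ by a single element.

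For the converse, suppose the representations differ by one element. Inserting a cell into a descending run strictly lowers the rank by \autoref{thm:computing_rank}, and $\rho([\tau]) > \rho([\gamma])$, so the extra cell $c$ belongs to the representation of $[\gamma]$, lying between consecutive cells $x_j, x_{j+1}$ of $[\tau]$. The key point is to show that $x_j \to x_{j+1}$ is a descending step rather than a regular-pair up-step: a valid insertion at a regular-pair up-step would force $\{x_j,c\}$ to be a regular pair (impossible, as $x_j$ already lies in the pair $\{x_j,x_{j+1}\}$ and gradient pairs are disjoint) or would contradict the dimension constraint $\dim x_{j+1} = \dim x_j + 1$. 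Thus $x_j > c > x_{j+1}$, so the representation of $[\gamma]$ is a descending-run refinement of that of $[\tau]$; reading this back as a relation between zigzags yields $[\tau] \Rightarrow [\gamma]$, and the reduction of the first paragraph closes the argument.

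I expect the main obstacle to be this converse, and in particular the dimension-and-disjointness case analysis ruling out an inserted cell across a regular-pair up-step, together with the bookkeeping in the forward direction needed to guarantee that the refined concatenation stays repeat-free and is therefore genuinely the canonical representation to which \autoref{thm:computing_rank} applies.
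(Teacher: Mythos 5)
Your reduction of covering to comparability (rank difference $1$ plus strict monotonicity of $\rho$) is correct and is also how the paper closes its own argument, and invoking \autoref{thm:flo_po_desc} is the right starting point. But both directions of your proof share the same genuine gap: you never handle the alternative ``exactly one element removed'' in the statement, and your reasoning actively (and wrongly) excludes it. In the forward direction, the concatenation of the refined arrows $g_i'$ produced by \autoref{thm:flo_po_desc} need \emph{not} be repeat-free: if the cell inserted into a right-pointing arrow already occurs in the sequence representation of $[\tau]$, then the reduction used in the proof of \autoref{thm:computing_representation} (replacing $(\dots, x', y', x', \dots)$ by $(\dots, x', \dots)$) applies, and the canonical representation of $[\gamma]$ is that of $[\tau]$ with one element \emph{removed}, not added. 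You flag ``staying repeat-free'' as bookkeeping to be checked, but it genuinely fails in general, and that failure is exactly where the removal alternative comes from; the paper's proof splits into these two cases explicitly.

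The converse contains the matching error: you assert that, since insertion into a descending run lowers rank, the extra cell must lie in the representation of $[\gamma]$. This inference is false, because an insertion that creates a regular-pair up-step \emph{raises} the rank by one. A concrete counterexample to your claim sits inside \autoref{ex:D3_flo}: $[\tau] = [f > z < b > x]$ has sequence $(f,z,b,x)$ and rank $2$, and it covers $[\gamma] = [f > b > x]$, which has sequence $(f,b,x)$ and rank $1$ (insert $b$ into the arrow $f > z$ and reduce the resulting repeat); here the extra cell $z$ lies in the sequence of $[\tau]$, not of $[\gamma]$. Your argument can neither produce nor verify this covering pair, so as written it proves a strictly weaker statement than the theorem. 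The paper's Case II of the converse handles it: when the cell missing from $[\gamma]$'s sequence sits next to an up-step of $[\tau]$, identities in the localization such as $[x_i > x_{i+2}] = [x_i < x_{i+1} > x_i > x_{i+2}]$ exhibit $[\gamma]$ as $[\tau]$ with one cell added to a right-pointing arrow, giving comparability and hence covering. (Your disjointness argument ruling out an insertion strictly between the two cells of an up-step is fine, but it addresses a different, easier point and does not repair either direction.)
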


\begin{proof}
    First, suppose $[\tau]$ covers $[\gamma]$. By \autoref{thm:flo_po_desc}, you can get $[\gamma]$ from $[\tau]$ by adding an element $x'$ to the sequence representation of $[\tau]$ and possibly reducing to a non-repeating sequence. There are two cases to consider,
    
    \begin{description}[style=multiline]
    \item[(I)] The element $x'$ was not in the sequence representation of $[\tau]$.
    \item[(II)] The element $x'$ was in the sequence representation of $[\tau]$.
    \end{description}
    
    \begin{description}[labelwidth=1.25cm,leftmargin=!]
        \item[Case I] In this case, the sequence representation of $[\gamma]$ is simply the sequence representation of $[\tau]$ with $x'$ added.
        \sloppy
        \item[Case II] In this case, adding $x'$ to the sequence representation of $[\tau]$ gives something of the form $(x_0, \dots, x_i, x', x_{i+1}, x', x_{i+2}, \dots, x_k)$, which is equivalent to $(x_0, \dots, x_i, x', x_{i+2}, \dots, x_k)$. Hence, the sequence representation of $[\gamma]$ is the sequence representation of $[\tau]$ with $x_{i+1}$ removed.
    \end{description}
    
    Now, for the other direction, there are again two cases to consider.
    \begin{description}[style=multiline]
    \item[(I)] The sequence representation of $[\gamma]$ equals that of $[\tau]$ with one element added.
    \item[(II)] The sequence representation of $[\gamma]$ equals that of $[\tau]$ with one element removed.
    \end{description}

    \begin{description}[labelwidth=1.25cm,leftmargin=!]
        \sloppy
        \item[Case I] Let the sequence representations of $[\tau]$ and $[\gamma]$ be $(x_0, \dots, x_k)$ and $(x_0, \dots, x_i, x', x_{i+1}, \dots, x_k)$, respectively. We use \autoref{thm:computing_rank} and the fact that $\rho([\tau]) = \rho([\gamma]) + 1$ to conclude that $\dim x_i > \dim x' > \dim x_{i+1}$. Hence, $[\gamma]$ equals $[\tau]$ with the element $x'$ added to a right-pointing arrow, so $[\gamma] \Rightarrow [\tau]$.
        
        \sloppy
        \item[Case II] Let the sequence representations of $[\tau]$ and $[\gamma]$ be $(x_0, \dots, x_k)$ and $(x_0, \dots, x_i, x_{i+2}, \dots, x_k)$, respectively. We use \autoref{thm:computing_rank} and the fact that $\rho([\tau]) = \rho([\gamma]) + 1$ to conclude that either $\dim x_{i+1} > \dim x_i$ or $\dim x_{i+2} > \dim x_{i+1}$. In the first case, $[x_i > x_{i+2}] = [x_i < x_{i+1} > x_i > x_{i+2}]$. In the second case, $[x_i > x_{i+2}] = [x_i > x_{i+2} > x_{i+1} < x_{i+2}]$. In any case, $[\gamma]$ can be constructed by adding a single element (either $x_i$ or $x_{i+2}$) to a right-pointing arrow of $[\tau]$, so $[\gamma] \Rightarrow [\tau]$.
    \end{description}

    When $[\gamma] \Rightarrow [\tau]$, $[\gamma]$ covers $[\tau]$, as their rank differ by one, and this concludes the proof.
\end{proof}

\renewcommand{\algorithmicrequire}{\textbf{Input:}}
\renewcommand{\algorithmicensure}{\textbf{Output:}}
\begin{algorithm}
\caption{Compute Hom poset}\label{algorithm_1}
\begin{algorithmic}[1]
\Require{A regular CW complex $X$, a gradient vector field $V_f$, a source $w$, and a target $z$.}
\Ensure The Hom poset $\Hom(w, z)$ of the discrete flow category.
\State Construct a directed graph $G$ with cells as vertices and an edge $u \to v$ if $v < u$ or if $\{u, v\}$ is a regular pair
\State In $G$, find all paths with non-repeating vertices from $w$ to $z$
\State For all paths, compute the rank with \autoref{thm:computing_rank}.
\For{$i = 1..(\dim w - \dim z)$}
  \State Compute the covering relations between paths of rank $i-1$ and paths of rank $i$, using \autoref{thm:computing_covering}.
\EndFor
\end{algorithmic}
\end{algorithm}

Note that the only data needed for the CW complex $X$ is the graph of its covering relations. Only the covering relations of $\Hom(w, z)$ are output with this algorithm, but the full set of partial order relations is easily computed from this.

Regarding complexity of this algorithm, the main computational cost is comparing the covering relations. Comparing two paths to determine if one covers the other takes $\bigo(l)$ time, where $l$ is the length of the longest path. For each path $\gamma$, the algorithm makes one comparison to each path of rank $\rho(\gamma) + 1$. Hence, the algorithm makes $\bigo(n \cdot W)$ comparisons, where $n$ is the number of morphisms in $\Hom(w,z)$, and $W$ is the maximum number of morphisms of a given rank (the maximal ``width'' of a rank layer in the Hasse diagram). In total, the complexity of the algorithm is $\bigo(l \cdot n \cdot W)$ (it's easy to verify that steps 1.-3. have smaller complexity than this). As $W$ is clearly smaller than $n$, we could write this more simply as $\bigo(l \cdot n^2)$. For comparison, listing the sequence representations of all morphisms in $\Hom(w, z)$ takes $\Theta(l \cdot n)$ time (and this is thus a lower bound of how fast an algorithm to compute $\Hom(w,z)$ can be).

\subsection{The Hom posets are CW posets}

In this section, we prove \autoref{theoremA}: that in the case where $X$ is a simplicial complex, then for any $w, z \in \Flo_{\Sigma}[X]$ with $\Hom(w, z)$ nonempty, the poset $P_{w, z}$, as defined in \eqref{eq:Pwz_def}, is a CW poset. (Note that when $\Hom(w, z)$ is empty, then $P_{w,z}$ is just the poset with one element.)

\begin{lemma}\label{lemma:poset_join}
    Let $P$ and $Q$ be posets with greatest elements $\hat 1_P$ and $\hat 1_Q$, respectively. Suppose that $|P \setminus \{\hat 1_P\}| \cong S^m$ and $|Q \setminus \{\hat 1_Q\}| \cong S^n$. Then the realization of the poset $(P \times Q) \setminus \{(\hat 1_P, \hat 1_Q)\}$ is homeomorphic to $S^{m+n+1}$.
\end{lemma}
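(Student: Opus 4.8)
The guiding intuition is that this is the poset-level shadow of the topological identity $S^m * S^n \cong S^{m+n+1}$ for joins, but I would prove it by hand through order complexes so as to obtain an honest homeomorphism rather than just a homotopy equivalence. Write $\bar P = P \setminus \{\hat 1_P\}$ and $\bar Q = Q \setminus \{\hat 1_Q\}$, so $|\N(\bar P)| \cong S^m$ and $|\N(\bar Q)| \cong S^n$ by hypothesis. First I would record that, since $\hat 1_P$ is the greatest element, the order complex $\N(P)$ is the simplicial cone $\hat 1_P * \N(\bar P)$; hence $|\N(P)| \cong \operatorname{Cone}(|\N(\bar P)|) \cong \operatorname{Cone}(S^m) \cong D^{m+1}$, and this homeomorphism carries the subspace $|\N(\bar P)|$ onto the boundary sphere $\partial D^{m+1}$ while sending the apex $\hat 1_P$ to the centre. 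The same applies to $Q$, giving $|\N(Q)| \cong D^{n+1}$ with $|\N(\bar Q)| \cong \partial D^{n+1}$.

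Next I would invoke that $\N$ preserves products and that geometric realization preserves finite products in compactly generated spaces (see \cite{GoerssJardine}), so $|\N(P \times Q)| \cong |\N(P)| \times |\N(Q)| \cong D^{m+1} \times D^{n+1}$. It then remains to locate $|\N(R)|$, where $R = (P\times Q)\setminus\{\hat 1\}$ and $\hat 1 = (\hat 1_P, \hat 1_Q)$, inside this product disk. I would cover $R$ by the two order ideals $A = \bar P \times Q$ and $B = P \times \bar Q$, so that $A \cup B = R$ and $A \cap B = \bar P \times \bar Q$. The combinatorial crux is that every chain of $R$ lies wholly in $A$ or wholly in $B$: an element of $A \setminus B$ has the form $(p, \hat 1_Q)$ with $p \ne \hat 1_P$, an element of $B \setminus A$ has the form $(\hat 1_P, q)$ with $q \ne \hat 1_Q$, and any two such elements are incomparable, so no chain can meet both sets. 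Consequently $\N(R) = \N(A) \cup \N(B)$ with $\N(A) \cap \N(B) = \N(A \cap B)$ as simplicial complexes.

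Finally I would identify the pieces of this gluing. The product formula gives $|\N(A)| \cong S^m \times D^{n+1}$, $|\N(B)| \cong D^{m+1} \times S^n$, and $|\N(A \cap B)| \cong S^m \times S^n$, and by the cone description of the first step the inclusions $\N(A\cap B) \hookrightarrow \N(A)$ and $\N(A\cap B) \hookrightarrow \N(B)$ realize to $\operatorname{id} \times (\partial D^{n+1} \hookrightarrow D^{n+1})$ and $(\partial D^{m+1} \hookrightarrow D^{m+1}) \times \operatorname{id}$, respectively. This is precisely the standard decomposition $\partial(D^{m+1} \times D^{n+1}) = (S^m \times D^{n+1}) \cup_{S^m \times S^n} (D^{m+1} \times S^n)$, so $|\N(R)| \cong \partial(D^{m+1} \times D^{n+1}) \cong \partial D^{m+n+2} \cong S^{m+n+1}$, as claimed.

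I expect the main obstacle to be bookkeeping at the level of homeomorphism rather than homotopy: concretely, verifying that geometric realization genuinely commutes with the product here, and, more importantly, checking that the gluing maps of the decomposition $\N(R) = \N(A)\cup\N(B)$ agree, under the cone homeomorphisms of the first step, with the base inclusions $\partial D^{k} \hookrightarrow D^{k}$, so that the reassembled space is exactly the boundary of the product disk and not merely a space with the homotopy type of a sphere.
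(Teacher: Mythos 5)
Your proof is correct, and it takes a genuinely different route from the paper's. The paper handles this lemma in a single line, citing \cite[Proposition 1.9]{Quillen} to assert that the realization of $(P \times Q) \setminus \{(\hat 1_P, \hat 1_Q)\}$ is the join of $S^m$ and $S^n$, hence $S^{m+n+1}$. You instead construct the homeomorphism by hand: the cone description $(|\N(P)|, |\N(\bar P)|) \cong (D^{m+1}, S^m)$, compatibility of nerve and realization with finite products, the combinatorial observation that every chain of $R$ lies entirely in $A = \bar P \times Q$ or entirely in $B = P \times \bar Q$ (so that $\N(R) = \N(A) \cup \N(B)$ with intersection $\N(\bar P \times \bar Q)$), and the identification of $|\N(A)| \cup |\N(B)|$ with $\partial(D^{m+1} \times D^{n+1}) \cong S^{m+n+1}$. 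What your route buys is self-containedness, and in fact it supplies a step that the paper's citation compresses: Quillen's Proposition 1.9 concerns the join \emph{poset} $\bar P * \bar Q$, which is not isomorphic to $(P \times Q) \setminus \{\hat 1\}$ as a poset (when $\bar P$ and $\bar Q$ are two-element antichains, the former realizes to a $4$--cycle and the latter to an $8$--cycle), so strictly speaking one also needs a canonical homeomorphism between the realizations of these two posets, in the style of Walker's canonical homeomorphism theorems; your disk-boundary decomposition proves exactly that identification in the case needed here. Two of the difficulties you flag at the end can also be discharged cheaply: the gluing bookkeeping is automatic because $\N(A)$ and $\N(B)$ are subcomplexes of $\N(P \times Q)$, so the union $|\N(A)| \cup |\N(B)|$ is formed inside the ambient space $|\N(P \times Q)| \cong D^{m+1} \times D^{n+1}$ rather than by an abstract pushout; and the point-set issue about realization preserving products is vacuous here, since $|\N(\bar P)| \cong S^m$ being compact forces $\bar P$ (hence $P$, and likewise $Q$) to be a finite poset.
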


\begin{proof}
    From \cite[Proposition 1.9]{Quillen}, the realization of $(P \times Q) \setminus \{(\hat 1_P, \hat 1_Q)\}$ is the join of $S^m$ and $S^n$, which is $S^{m+n+1}$.
\end{proof}

\begin{definition}
 \textcolor{black}{
    Let $X$ be a simplicial complex, and let $\sigma \in X$. We define the \emph{link} of $\sigma$ as the set
    \begin{equation*}
        \cof \sigma = \{\tau \setminus \sigma : \sigma \subsetneq \tau \}
    \end{equation*}
}
\end{definition}

 \textcolor{black}{
In other words, the link of $\sigma$ contains all the proper cofaces of $\sigma$, with $\sigma$ subtracted as a set from the complexes. It is a well-known fact that the link of simplex is itself a simplicial complex.
}

\begin{lemma}\label{lemma:ent_hom_realization}
    Let $X$ be a simplicial complex, and let $\sigma \in X^n$ and $\tau \in X^m$ with $\tau \subseteq \sigma$. Then the realization of $\Hom_{\Ent[X]}(\sigma, \tau) \setminus \{(\sigma > \tau)\}$ is homeomorphic to $S^{n-m-2}$.
\end{lemma}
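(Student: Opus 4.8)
The key is to recognize the poset $\Hom_{\Ent[X]}(\sigma,\tau)$ as a poset of chains in an interval of the face poset, and then to identify that interval, using simpliciality, with a Boolean lattice. A morphism in $\Hom_{\Ent[X]}(\sigma,\tau)$ is a strictly descending sequence $\sigma = c_0 > c_1 > \dots > c_k = \tau$, and such a sequence is determined by its set of \emph{intermediate} cells $\{c_1,\dots,c_{k-1}\}$, which is a chain (totally ordered subset) in the open interval $Q := \{z \in X : \tau \subsetneq z \subsetneq \sigma\}$ of the face poset. This assignment is a bijection between $\Hom_{\Ent[X]}(\sigma,\tau)$ and the set of (possibly empty) chains of $Q$, and by the definition of the partial order on $\Ent[X]$ (subsequence containment), $f \Rightarrow f'$ holds exactly when the intermediate set of $f$ is contained in that of $f'$. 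Under this correspondence the distinguished morphism $(\sigma > \tau)$ is the empty chain, which is the least element. So first I would argue that $\Hom_{\Ent[X]}(\sigma,\tau)\setminus\{(\sigma>\tau)\}$ is isomorphic, as a poset, to the set of \emph{nonempty} chains of $Q$ ordered by inclusion, i.e. to the face poset of the order complex of $Q$.

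Next I would pass to realizations using a standard fact: for any poset $P$, the poset of nonempty chains of $P$ ordered by inclusion is the face poset of the order complex $\Delta(P)$, so its realization is the barycentric subdivision $\mathrm{sd}\,\Delta(P)$, which is homeomorphic to $|\Delta(P)|$ = the realization of $P$ itself. Applying this with $P = Q$ gives that the realization of $\Hom_{\Ent[X]}(\sigma,\tau)\setminus\{(\sigma>\tau)\}$ is homeomorphic to the realization of the poset $Q$.

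It then remains to identify $Q$. Here is where simpliciality is essential. Since $X$ is a simplicial complex, a cell is determined by its vertex set, and every subset of the vertex set of $\sigma$ is again a simplex of $X$. Hence the closed interval $[\tau,\sigma]$ in the face poset is isomorphic to the Boolean lattice of all subsets of $A := \sigma \setminus \tau$ (via $z \mapsto z \setminus \tau$), and $|A| = (n+1)-(m+1) = n-m$. Thus $Q$ is exactly the proper part of this Boolean lattice, i.e. the poset of nonempty proper subsets of $A$, which is the face poset of the boundary $\partial\Delta^{\,n-m-1}$ of the $(n-m-1)$-simplex on vertex set $A$. Therefore the realization of $Q$ is homeomorphic to $|\partial\Delta^{\,n-m-1}| \cong S^{\,n-m-2}$, which is the claim. (In the codimension-one case $n-m=1$ one has $Q=\emptyset$ and the realization is empty, consistent with the convention $S^{-1}=\emptyset$.)

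\textbf{Main obstacle.} The computations are routine; the care lies in the bookkeeping of the first paragraph — verifying precisely that the subsequence order on morphisms translates into inclusion of intermediate chains, so that the resulting object is genuinely the face poset of $\Delta(Q)$ rather than merely $Q$ — and in invoking the barycentric-subdivision homeomorphism at the correct level (one subdivision is absorbed in passing from chains-of-$Q$ back to $Q$). The conceptual crux is the identification $[\tau,\sigma]\cong$ Boolean lattice, which uses simpliciality in an unavoidable way: for a general regular CW complex the interval $[\tau,\sigma]$ need not be Boolean, so this lemma (and hence the CW-poset conclusion of \autoref{theoremA}) is genuinely special to the simplicial setting. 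Note that \autoref{lemma:poset_join} is not required for this lemma; it is the tool for assembling these sphere factors in the subsequent treatment of the flow-category Hom posets.
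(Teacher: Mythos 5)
Your proof is correct and follows essentially the same route as the paper: identify the non-atom morphisms with nonempty chains in the open interval between $\tau$ and $\sigma$, absorb one barycentric subdivision to pass from chains back to the interval, and use simpliciality to identify that interval with the face poset of $\partial\Delta^{n-m-1}$. The only cosmetic difference is that the paper phrases the interval as the boundary of $\sigma\setminus\tau$ inside the coface complex $\cof\tau$ rather than as the proper part of a Boolean lattice, which is the same identification $z\mapsto z\setminus\tau$.
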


\begin{proof}
    The poset $\Hom_{\Ent[X]}(\sigma, \tau) \setminus \{(\sigma > \tau)\}$ contains all sequences $(\sigma > \upsilon_1 > \dots > \upsilon_k > \tau)$, with $k \ge 1$, and the partial order is given by inclusion. This poset is isomorphic to the poset of descending sequences $(\upsilon_1 > \dots > \upsilon_k)$ of simplices in $\partial (\sigma \setminus \tau) \subseteq \cof \tau$, i.e., the boundary of $\sigma \setminus \tau$ in the  \textcolor{black}{link} of $\tau$ (where the ordering is still given by inclusion). This is again isomorphic to the barycentric subdivision $T(\partial (\sigma \setminus \tau))$.

    Now,
    \begin{equation*}
        | T(\partial (\sigma \setminus \tau)) | \cong | \partial (\sigma \setminus \tau) | \cong | \partial \Delta^{n-m-1} | \cong |S^{n-m-2}|,
    \end{equation*}
    as desired (here we use that $\sigma \setminus \tau$ is a set of cardinality $(n+1)-(m+1) = n-m$, and hence an $(n-m-1)$--simplex).
\end{proof}

In the proof of the following lemma, we use the fact that for a poset $P$, $|P^{\op}| \cong |P|$, which is a special case of the fact that $| \N (C^{\op}) | \cong | \N(C) |$ for a general category $C$.

\begin{lemma}\label{lemma:ent_hom_interval}
    Let $X$ be a simplicial complex. Let $P = \Hom_{\Ent[X]}(w, z)^{\op} \cup \{\hat 0\}$, where $\hat 0$ is a least element. Then, for any $g \in P \setminus \{\hat 0\}$, the realization of the open interval $(\hat 0, g)$ is homeomorphic to a sphere. 
\end{lemma}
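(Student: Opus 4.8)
The plan is to identify the open interval $(\hat 0, g)$ with the poset of \emph{proper refinements} of the chain $g$, and then to split this poset as a product over the ``gaps'' of $g$ so that Lemma~\ref{lemma:poset_join} and Lemma~\ref{lemma:ent_hom_realization} apply. Write $g$ as the descending chain $(w = \sigma_0 > \sigma_1 > \cdots > \sigma_\ell = z)$ in $\Ent[X]$. Since $\hat 0$ is the least element of $P$ and the order among morphisms is the opposite of the subsequence order, a morphism $h$ satisfies $h < g$ in $P$ precisely when $g$ is a proper subsequence of $h$, i.e.\ $h$ is obtained from $g$ by inserting at least one extra cell. Thus $(\hat 0, g)$ is exactly the poset of proper refinements of $g$, ordered by reverse inclusion of sequences (the finer chain being the smaller element).

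Next I would exhibit the product decomposition. Because cells can only be inserted strictly between consecutive entries $\sigma_i > \sigma_{i+1}$, and insertions in different gaps are independent, sending a refinement $h$ to the tuple of its restrictions to the gaps gives a poset isomorphism between the poset of \emph{all} refinements of $g$ (ordered by refinement) and the product $\prod_{i=0}^{\ell-1} Q_i$, where $Q_i = \Hom_{\Ent[X]}(\sigma_i, \sigma_{i+1})$ carries the subsequence order. The trivial refinement $g$ corresponds to the tuple of minimal elements $\bigl((\sigma_i > \sigma_{i+1})\bigr)_i$. Passing to the order inherited from $P$ reverses each factor, so that $(\hat 0, g) \cong \bigl(\prod_{i=0}^{\ell-1} Q_i^{\op}\bigr) \setminus \{\hat 1\}$, where $\hat 1 = \bigl((\sigma_i > \sigma_{i+1})\bigr)_i$ is the greatest element of the product.

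Finally I would compute the homeomorphism type. By Lemma~\ref{lemma:ent_hom_realization} together with the fact that $|C^{\op}| \cong |C|$, each factor satisfies $|Q_i^{\op} \setminus \{\hat 1_{Q_i^{\op}}\}| \cong S^{\dim \sigma_i - \dim \sigma_{i+1} - 2}$, with the convention $S^{-1} = \emptyset$ covering the case of a gap whose dimension difference is $1$. Applying Lemma~\ref{lemma:poset_join} iteratively across the $\ell$ factors (each join contributing $+1$ to the sphere dimension) then yields
\begin{equation*}
    |(\hat 0, g)| \cong S^{d}, \qquad d = \sum_{i=0}^{\ell-1} \bigl(\dim \sigma_i - \dim \sigma_{i+1} - 2\bigr) + (\ell - 1) = \dim w - \dim z - \ell - 1,
\end{equation*}
where the middle sum telescopes. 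The main obstacle is the bookkeeping of the opposite ordering---making sure the product decomposition is stated with the correct variance, so that the greatest element of $\prod Q_i^{\op}$ really is the image of $g$---and checking that Lemma~\ref{lemma:poset_join} stays valid when a factor contributes the empty sphere $S^{-1}$; this is exactly a trivial gap, where the factor is a single point and the join formula $\emptyset * S^n = S^n$ keeps the dimension count honest.
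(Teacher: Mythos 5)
Your proposal is correct and takes essentially the same route as the paper's proof: both identify the interval below $g$ with the product of the entrance-path Hom posets $\Hom_{\Ent[X]}(\sigma_i,\sigma_{i+1})^{\op}$ over the gaps of the chain, with $g$ as the greatest element of that product, and then apply \autoref{lemma:ent_hom_realization} factorwise and \autoref{lemma:poset_join} to realize the open interval as an iterated join of spheres. Your explicit dimension bookkeeping and the remark on the $S^{-1}$ (empty) factors for gaps of codimension one are refinements the paper leaves implicit, but the underlying argument is identical.
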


\begin{proof}
    Let $g = (x_0, \dots, x_k)$. Then,
    \begin{equation*}
        (\hat 0, g] \quad \cong \quad \Hom(x_0, x_1)^{\op} \times \Hom(x_1, x_2)^{\op} \times \dots \times \Hom(x_{k-1}, x_k)^{\op},
    \end{equation*}
    where the $\Hom(x_i, x_{i+1})$ are Hom posets in $\Ent[X]$. Now, observe that the atom $(x_i > x_{i+1})$ is the greatest element in $\Hom(x_i, x_{i+1})^{\op}$. Furthermore, by \autoref{lemma:ent_hom_realization}, the realization of $\Hom(x_i, x_{i+1})^{\op} \setminus \{(x_i > x_{i+1})\}$ is homeomorphic to a sphere. Thus, by applying \autoref{lemma:poset_join} to the equation above, we get that the realization $|(\hat 0, g)| = |(\hat 0, g] \setminus \{g\}|$ is homeomorphic to a sphere.
\end{proof}

\begin{theorem}[Theorem A]\label{thm:hom_op_is_cw}
    In the case that $X$ is a simplicial complex, the poset $P_{w, z}$, as defined in \eqref{eq:Pwz_def}, is a CW poset. 
\end{theorem}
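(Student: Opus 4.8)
The plan is to verify directly the three defining properties of a CW poset from \autoref{def:cw_poset}. Property (1) is immediate, since $P_{w,z}$ is by construction $\Hom(w,z)^{\op}$ with a least element $\hat 0$ adjoined. Property (2) holds under the standing hypothesis that $\Hom(w,z)$ is nonempty: then $P_{w,z}$ contains $\hat 0$ together with at least one morphism, hence has more than one element. All the work therefore goes into property (3): for every $[\gamma]\in P_{w,z}\setminus\{\hat 0\}$, the realization of the open interval $(\hat 0,[\gamma])$ must be homeomorphic to a sphere.

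To analyze this interval, I would fix a representative $\tau=(w=x_0\xrightarrow{g_0}y_0\xleftarrow{f_0}x_1\xrightarrow{g_1}\cdots\xleftarrow{f_{k-1}}x_k\xrightarrow{g_k}y_k=z)$ of $[\gamma]$. The interval $(\hat 0,[\gamma]]$ consists exactly of the morphisms $[\gamma']$ with $[\gamma]\Rightarrow[\gamma']$. By \autoref{thm:flo_po_desc}, each such $[\gamma']$ has a unique representative with the same objects $x_i,y_i$ and the same left-pointing arrows $f_i$, differing only in the right-pointing arrows, which become $g_i'$ with $g_i\Rightarrow g_i'$; conversely, any choice of $g_i'$ with $g_i\Rightarrow g_i'$ yields such a $[\gamma']$ (take all vertical maps in the defining diagram to be identities). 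Thus I would establish an order isomorphism
\[
(\hat 0,[\gamma]]\;\cong\;\prod_{i=0}^{k}Q_i,\qquad Q_i=\{\,g_i'\in\Hom_{\Ent[X]}(x_i,y_i):g_i\Rightarrow g_i'\,\},
\]
where each $Q_i$ carries the order induced from $P_{w,z}$, namely the opposite of the subsequence order on $\Hom_{\Ent[X]}(x_i,y_i)$. Intuitively, refining a morphism in the flow category refines only the right-pointing arrows, while the $f_i\in\Sigma^{+}$ are atoms and stay fixed; the uniqueness clause of \autoref{thm:flo_po_desc} is what upgrades this correspondence to a bijection of \emph{posets}.

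With the product description in hand, I would recognize each factor as a familiar interval: $(Q_i,\le)$ is precisely the principal interval $(\hat 0,g_i]$ inside $\Hom_{\Ent[X]}(x_i,y_i)^{\op}\cup\{\hat 0\}$, whose greatest element is $g_i$. \autoref{lemma:ent_hom_interval} then shows that each $|Q_i\setminus\{g_i\}|$ is a sphere (possibly the $(-1)$-sphere $\emptyset$, when $g_i$ admits no proper refinement). Under the isomorphism above, the top element of $\prod_i Q_i$ is the tuple $(g_0,\dots,g_k)$, i.e.\ $[\gamma]$ itself, so $(\hat 0,[\gamma])$ corresponds to the product with its greatest element deleted. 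Applying \autoref{lemma:poset_join} inductively over the factors $Q_0,\dots,Q_k$ then exhibits $|(\hat 0,[\gamma])|$ as an iterated join of these spheres, hence again a sphere (and when every factor is trivial, i.e.\ $[\gamma]$ has rank $0$, the interval is empty and gives the $(-1)$-sphere, matching the $0$-cells of the resulting regular CW complex). This is exactly property (3).

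I expect the main obstacle to be the second paragraph: carefully proving that $(\hat 0,[\gamma]]\cong\prod_i Q_i$ as posets. Producing the bijection is a direct application of \autoref{thm:flo_po_desc}, but matching the partial orders requires checking that, for two morphisms $[\gamma'],[\gamma'']$ lying above $[\gamma]$ with associated tuples $(g_i')$ and $(g_i'')$, one has $[\gamma'']\Rightarrow[\gamma']$ if and only if $g_i''\Rightarrow g_i'$ for every $i$; this is where the uniqueness in \autoref{thm:flo_po_desc}, together with transitivity of the order, is essential. The remaining bookkeeping—tracking the passage to opposite posets and the degenerate $S^{-1}$ factors in the join—is routine once this order isomorphism is secured.
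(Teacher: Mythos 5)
Your proposal is correct and follows essentially the same route as the paper: both verify condition (3) of \autoref{def:cw_poset} by using \autoref{thm:flo_po_desc} to identify $(\hat 0, [\gamma]]$ with the product $(\hat 0, g_0] \times \dots \times (\hat 0, g_k]$, then invoke \autoref{lemma:ent_hom_interval} for each factor and \autoref{lemma:poset_join} to conclude that the open interval realizes to a sphere. Your write-up is in fact more careful than the paper's at the one point where it is terse, namely in spelling out that the uniqueness clause of \autoref{thm:flo_po_desc} is what makes the product identification an isomorphism of posets rather than merely a bijection.
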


\begin{proof}
    We use \autoref{def:cw_poset}. Conditions (1.) and (2.) are clearly fulfilled, so we need only to show that for all $x \in P_{w, z} \setminus \{\hat 0\}$, the realization of the open interval $(\hat 0, x)$ is homeomorphic to a sphere.

    Let $x$ be represented by 
    \begin{equation*}
        \tau = \left(w = x_0 \xrightarrow{g_0} y_0 \xleftarrow{f_0} x_1 \xrightarrow{g_1} \cdots \ \xleftarrow{f_{k-1}} x_k \xrightarrow{g_k} y_k = z\right).
    \end{equation*}
    Applying \autoref{thm:flo_po_desc}, we see that
    \begin{equation*}
        (\hat 0, x] \quad \cong \quad (\hat 0, g_0] \times (\hat 0, g_1] \times \dots \times (\hat 0, g_k].
    \end{equation*}
    By \autoref{lemma:ent_hom_interval}, the realization of $(0, g_i)$ is a sphere. Now, as in the proof for \autoref{lemma:ent_hom_interval}, we apply \autoref{lemma:poset_join} to the equation above and get that $|(\hat 0, x)|$ is homeomorphic to a sphere.
\end{proof}

\if 0
For the next proof, we will need the following lemma.

\begin{lemma}\label{lemma:regular_cw_interval}
    Let $X$ be a regular CW complex and let $x$ be a $(n+1)$--cell and $y$ an $(n-1)$--cell such that $x > y$. Then there is precisely two $n$--cells $z$ with $x > z > y$.
\end{lemma}

\begin{proof}
    Let $\partial x$ be the boundary of $x$ in $X$. Observe that, as $X$ is regular, $\partial x$ is a $n$--sphere. We now apply \cite[Theorem 2.2, p.\ 82]{Lundell}, which states that in a regular CW complex that is a topological $n$--manifold, each $(n-1)$--cell is the face of precisely two $n$--cells. Hence, $y$ is the face of precisely two $n$--cells in $\partial x$, which proves our statement.
\end{proof}

We now that $P_{w, z}$ is a CW poset. For this, we apply \autoref{thm:cw_poset_conditions}. It follows from the definition that $P_{w, z}$ has a least, so we need only prove (2.) and (3.): that every interval $[x, y]$ of length two has cardinality four, and that every interval $[\hat 0, x]$ is finite and shellable.

\begin{theorem}\label{thm:hom_op_is_cw}
    The poset $P_{w, z}$, as defined above, is a CW poset.
\end{theorem}

\begin{proof}
    We first prove that every interval $[x, y]$ of length two has cardinality four. There are two cases: $x \ne \hat 0$ and $x = \hat 0$.
    \begin{enumerate}
        \item When $x \ne \hat 0$, we can pick representatives $\tau'$ and $\tau$ of $x$ and $y$ as in \autoref{thm:flo_po_desc}. There are now two cases, either $g_i = g_i'$ for all but one $i$, or $g_i = g_i'$ for all but two $i$.
        \begin{enumerate}
            \item In the first case, let $l$ be such that $g_l \ne g_l'$. Then, as $\rho(y) = \rho(x) + 2$ and $g_i = g_i'$ for all $i \ne l$, we must have $r(g_l') = r(g_l) + 2$. Hence, $g_l$ is the sequence $g_l'$ with two elements removed. Now, the other $\hat g$ with $g_l \Rightarrow \hat g \Rightarrow g_l'$ are those where one of these two elements have been removed from $g_l'$, and there are clearly two such $\hat g$.

            Applying \autoref{thm:flo_po_desc}, all $\hat x$ with $x < \hat x < y$ must have a representative $\hat \tau$ that equals $\tau$ except with $g_l$ replaced with some morphism $\hat g_l$. Furthermore, $\hat g$ must satisfy $g_l \Rightarrow \hat g \Rightarrow g_l'$, and there are two such $\hat g$ (different from $g_l$ and $g_l'$). In conclusion, there are two such $\hat x$, and $[x, y]$ has cardinality four.
            \item In this case, $g_i'$ covers $g_i$ whenever $g_i \ne g_i'$, so there are no other $\hat g$ with $g_i \Rightarrow \hat g \Rightarrow g_i'$. We again apply \autoref{thm:flo_po_desc} to see that any $\hat x$ with $x < \hat x < y$ has a representative $\hat \tau$ that equals $\tau$ except with $g_i$ replaced with $g_i'$ for \emph{either} of the two $i$ such that $g_i \ne g_i'$. Hence, there are precisely two $\hat x$ with $x < \hat x < y$, and $[x, y]$ has cardinality four.
        \end{enumerate}
        \item When $x = \hat 0$, $\rho(y) = 1$. Choose a representative
        \begin{equation*}
            \tau = \left(w = x_0 \xrightarrow{g_0} y_0 \xleftarrow{f_0} x_1 \xrightarrow{g_1} \cdots \ \xleftarrow{f_{k-1}} x_k \xrightarrow{g_k} y_k = z\right)
        \end{equation*}
        of $y$. As $\rho(y) = 1$, we must have that $r(g_l) = \dim x_l - \dim y_l - 1$ for precisely one $l$ and $r(g_i) = \dim x_i - \dim y_i$ for all $i \ne l$. This $g_l$ is then a sequence $x_i = z_0 > z_1 > \dots > z_n = y_i$, and $\dim z_m = \dim z_{m+1} + 2$ for one $m$ and $\dim z_j = \dim z_{j+1} + 1$ for all $j \ne m$.

        Now, the elements in $[\hat 0, y]$ other than $\hat 0$ and $y$ are precisely the $[\tau']$ such that $[\tau'] < x$, i.e., $[\tau] \Rightarrow [\tau']$. There is precisely one such $[\tau']$ for each $g_l'$ with $g_l \Rightarrow g_l'$. These $g_l'$ are the sequences $z_0 > \dots > z_m > \hat z_m > z_{m+1} > \dots > z_n$, i.e., where we have inserted an element between $z_m$ and $z_{m+1}$ in $g_l$.

        We now apply \autoref{lemma:regular_cw_interval}. There are precisely two possible such $\hat z_m$, and we get that $[\hat 0, y]$ has cardinality four.
    \end{enumerate}

    Next, we prove that every interval $[\hat 0, x]$ is finite and shellable.

    To prove finiteness, we first pick a representative
    \begin{equation*}
        \tau = \left(w = x_0 \xrightarrow{g_0} y_0 \xleftarrow{f_0} x_1 \xrightarrow{g_1} \cdots \ \xleftarrow{f_{k-1}} x_k \xrightarrow{g_k} y_k = z\right)
    \end{equation*}
    of $x$. Now, using \autoref{thm:flo_po_desc}, we get that all $y \in [\hat 0, x]$, apart from $\hat 0$, has a representative on the form
    \begin{equation*}
        \tau' = \left(w = x_0 \xrightarrow{g_0'} y_0 \xleftarrow{f_0} x_1 \xrightarrow{g_1'} \cdots \ \xleftarrow{f_{k-1}} x_k \xrightarrow{g_k'} y_k = z\right)
    \end{equation*}
    with $g_i \Rightarrow g_i'$. As the CW complex $X$ is finite, there is a finite amount of morphisms $g$ in $\Ent[X]$, and hence there is a finite amount of possible representatives on the above form, which proves finiteness of $[\hat 0, x]$.

    To prove that $[\hat 0, x]$ is shellable, we define an L-labeling and apply \autoref{thm:lexico_shellable}. The interval $[\hat 0, x]$ is graded with the rank function inherited from $P_{w, z}$. It's bounded, as $\hat 0$ is a least element and $x$ is a greatest element, and it's finite, as shown above. We now define the L-labeling.

    Choose a representative
    \begin{equation*}
        \tau = \left(w = x_0 \xrightarrow{g_0} y_0 \xleftarrow{f_0} x_1 \xrightarrow{g_1} \cdots \ \xleftarrow{f_{k-1}} x_k \xrightarrow{g_k} y_k = z\right)
    \end{equation*}
    of $x$. Now, by \autoref{thm:flo_po_desc} all other $y \in [\hat 0, x]$ has a representative with the same $x_i$'s, $y_i$'s and $f_i$'s as $x$. A covering relation $y > y' \ne \hat 0$ corresponds to adding a single element to one of the right-pointing arrows in the representative for $y$. We define the edge-labeling $\lambda \colon C([\hat 0, x]) \to \Z \cup \{+\infty\}$ as:
    \begin{align*}
        \lambda(y, y') & = \textrm{ the dimension of the element added from } y \textrm{ to } y', \textrm{ if } y' \ne \hat 0, \\
        \lambda(y,  \hat 0) &= ?, \\
    \end{align*}
    
    We now prove that $\lambda$ is an L-labeling.
\end{proof}
\fi

In conclusion, what this result tells us is that in the discrete flow category, the opposite poset of a Hom set is the face poset of a regular CW complex. The nerve of the face poset of a regular CW complex is its barycentric subdivision, and hence has a geometric realization which is homeomorphic to the CW complex. Furthermore, the nerve of the opposite of a poset is isomorphic to the nerve of the poset. Thus, we get a simpler way to describe the geometric realization of the nerve of a Hom set in the discrete flow category: instead of taking the nerve and realizing, we can construct the CW complex of which it is a face poset, which results in a description with fewer cells.
To construct this CW complex, we need only compute the rank values and the covering relations, which we can do with \autoref{algorithm_1}.

\begin{example}\label{ex:D3_flo}

We illustrate the observations in the last paragraph with an example.

Consider a regular CW decomposition of the filled sphere $D^3$, as illustrated in \autoref{fig:filled_sphere}. We define the gradient vector field $V_f = \{(w > y), (b > z)\}$, and let $\Sigma$ be its corresponding Morse system. We compute $\Hom(f, x)$ in the discrete flow category.

\begin{figure}[htbp]
    \centering
    \begin{tikzpicture}[scale=.5]
        \filldraw[lightgray] (0,0) circle (5);
        \draw[] (0,0) circle (5);
        \draw[thick] (0,0) ellipse (5 and 2);
        \filldraw (5,0) circle (5 pt) node[anchor=west]{$y$};
        \filldraw (-5,0) circle (5 pt) node[anchor=east]{$x$};
        \draw (0, 2.5) node{$w$};
        \draw (0, -1.5) node{$z$};
        \draw (0, 5.5) node{$t$};
        \draw (0, -5.5) node{$b$};
        \draw (6.5, 5) node{$f$};
        \draw [->, dashed] (6, 5) .. controls (4, 5) and (2, 3) .. (1, 0);
    \end{tikzpicture}
    \caption{A CW decomposition of $D^3$. The 0--cells are $x$ and $y$, the 1--cells are $w$ and $z$, the 2--cells are $t$ and $b$ and the single 3--cell is $f$.}
    \label{fig:filled_sphere}
\end{figure}
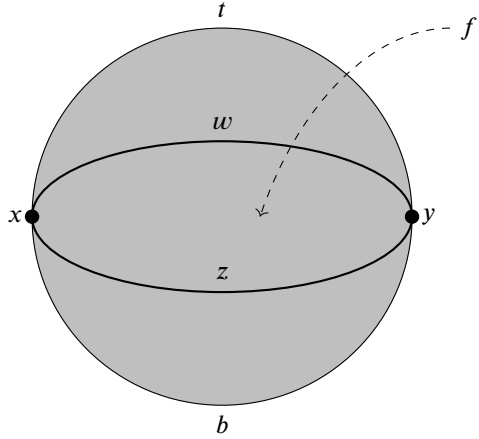

As we have shown that $\Hom(f, x)$ is a graded poset, we can represent it by a Hasse diagram. The highest rank is $\dim f - \dim x - 1 = 2$. All rank 2 elements are gradient paths on the form 
\begin{equation*}
    f = x_0 \xrightarrow{g_0} y_0 \xleftarrow{f_0} x_1 \xrightarrow{g_1} \cdots \ \xleftarrow{f_{k-1}} x_k \xrightarrow{g_k} y_k = x,
\end{equation*}
where all $g_i$ are atoms. There are four of these:
\begin{itemize}
    \item $f > x$
    \item $f > z < b > x$
    \item $f > y < w > x$
    \item $f > z < b > y < w > x$
\end{itemize}
All elements of lower rank can be constructed from these by inserting intermediate cells, and all Hasse diagram edges correspond to inserting an intermediate cell in a sequence. For example, inserting $b$ between $f$ and $x$ in $f > x$ gives $f > b > x$, and inserting $b$ between $f$ and $z$ in $f > z < b > x$ gives $f > b > z < b > x = f > b > x$. We now draw the Hasse diagram, which is illustrated in \autoref{fig:D3_hom_poset}.

\begin{figure}[htbp]
    \centering
    \includegraphics[width=1\textwidth]{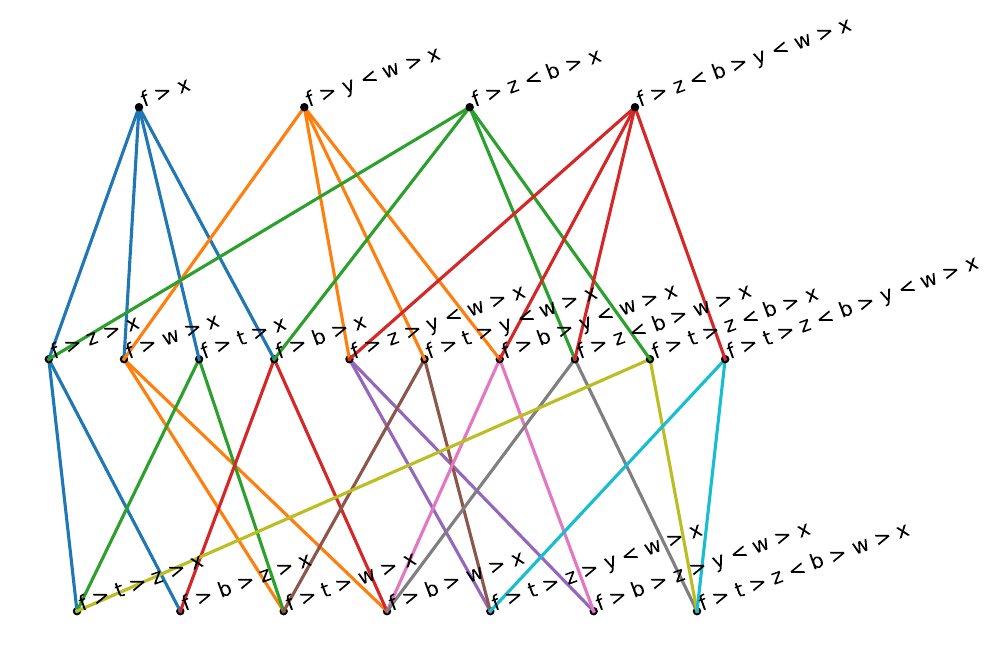}
    \caption{The Hasse diagram for the Hom poset $\Hom(f, x)$ in the discrete flow category.}
    \label{fig:D3_hom_poset}
\end{figure}

The corresponding regular CW complex is illustrated in \autoref{fig:D3_hom_CW_cplx}, and it's easy to see that this is a contractible space.

\begin{figure}[htbp]
    \centering
    \begin{tikzpicture}
        \filldraw[lightgray] (-4, 0) -- (0, -3) -- (4, 0) -- (0, 3) -- cycle;

        \draw (-4, 0) -- (4, 0);
        \draw (0, -3) -- (0, 3);
        \draw (0, -3) -- (4, 0);
        \draw (0, -3) -- (-4, 0);
        \draw (0, 3) -- (4, 0);
        \draw (0, 3) -- (-4, 0);
        
        \filldraw (0, 0) circle (2 pt);
        \filldraw (2, 0) circle (2 pt);
        \filldraw (4, 0) circle (2 pt);
        \filldraw (-2, 0) circle (2 pt);
        \filldraw (-4, 0) circle (2 pt);
        \filldraw (0, 3) circle (2 pt);
        \filldraw (0, -3) circle (2 pt);
    \end{tikzpicture}
    \caption{The face poset of this regular CW complex is $\Hom(f, x)$.}
    \label{fig:D3_hom_CW_cplx}
\end{figure}
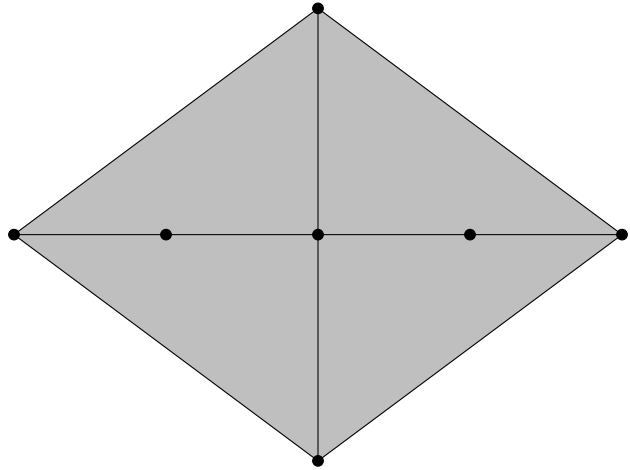

\end{example}

\begin{example}\label{ex:theoremAcounter}
We illustrate that \autoref{thm:hom_op_is_cw} may fail to hold when $X$ is not a simplicial complex. In fact, \autoref{lemma:ent_hom_interval}, which may be viewed as a special case of \autoref{thm:hom_op_is_cw} where the Morse system is empty (so that $\Flo_{\Sigma}[X] = \Ent[X]$), fails to hold in our example.

First, we describe how the suspension of a simplicial complex is also a simplicial complex. Let $X$ be a simplicial complex. Its cone $CX$ can be viewed as a simplicial complex as follows:
\begin{equation*}
    CX = X \cup \{\{0\}\} \cup \{\sigma \cup \{0\} : \sigma \in X\}.
\end{equation*}
The suspension $\Sigma X$, which is just the union of two cones $CX$ along $X$, can then be viewed as the simplicial complex:
\begin{equation}\label{eq:suspension_cplx}
    \Sigma X = X \cup \{\{0\}\} \cup \{\sigma \cup \{0\} : \sigma \in X\}
    \cup \{\{1\}\} \cup \{\sigma \cup \{1\} : \sigma \in X\}.
\end{equation}

Our counterexample uses the following fact: there exists a simplicial complex $Y$ such that the suspension $\Sigma Y$ is homeomorphic to a sphere, but $Y$ is not homeomorphic to $S^k$ for any $k$.
For this, we will consider the Poincaré sphere, which is a simplicial complex and a homology 3--sphere, but not homeomorphic to a sphere. Its suspension is not homeomorphic to a sphere, but its double suspension is (for more details, see \cite[Example 3.2.11]{Thurston}). Hence, letting $Y$ be the suspension of the Poincaré sphere, we get our desired properties.

Now, let $Y$ be the suspension of the Poincaré sphere. As $\Sigma Y$ is a simplicial complex and homeomorphic to $S^5$, it is also a regular CW complex of dimension 4. Hence we can construct a new regular CW complex $Z$ by attaching a 5--cell $e$ along $\Sigma Y$. The CW complex $Z$ is illustrated in \autoref{fig:poincare_counterexample}.

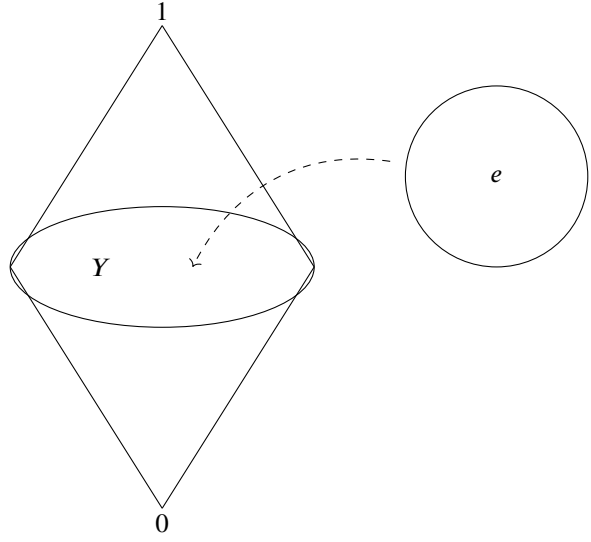
\begin{figure}[htbp]
    \centering
    \begin{tikzpicture}[scale=.4]
        \draw[] (0,0) ellipse (5 and 2);
        \draw (-5,0) -- (0,-8);
        \draw (5,0) -- (0,-8);
        \draw (-5,0) -- (0,8);
        \draw (5,0) -- (0,8);
        \draw (0, 8.5) node{$1$};
        \draw (0, -8.5) node{$0$};
        \draw (11, 3) circle (3);
        \draw (11, 3) node{$e$};
        \draw [->, dashed] (7.5, 3.5) .. controls (4, 4) and (2, 2) .. (1, 0);
        \draw (-2,0) node{$Y$};
    \end{tikzpicture}
    \caption{An illustration of the CW complex $Z$.}
    \label{fig:poincare_counterexample}
\end{figure}

Now, consider the Hom poset
\begin{equation}\label{eq:hom_counterexample}
    \Hom_{\Ent[Z]}(e, 0) \setminus \{(e > 0)\},
\end{equation}
where 0 is a 0--simplex in $\Sigma Y$ as per the notation in \eqref{eq:suspension_cplx}. This poset consists of all descending chains $(e > \tau_1 > \dots > \tau_k > 0)$, with $k \ge 1$, ordered by inclusion. The set of $\tau_i$ satisfying $e > \tau_i > 0$ is precisely $\{\{0\} \cup \sigma : \sigma \in Y\}$. Hence, the poset in \eqref{eq:hom_counterexample} is isomorphic to the barycentric subdivision of $Y$. Thus,
\begin{equation*}
    | \Hom_{\Ent[Z]}(e, 0) \setminus \{(e > 0)\} | \cong |T(Y)| \cong |Y|,
\end{equation*}
which is not homeomorphic to a sphere. Thus, the interval $(\hat 0, (e>0)) \in P_{e,0}$ (which is just the opposite of the poset in \eqref{eq:hom_counterexample}) is not a sphere, and hence $P_{e, 0}$ is not a sphere.

\end{example}

\section{Simplicial collapse on simplicial sets}

In this section we define \emph{regular} simplicial sets, and generalize the concept of \emph{simplicial collapse} to these simplicial sets. We further prove how the nerves of certain categories are regular, and apply simplicial collapse to a subclass of these categories.

\subsection{Collapses on simplicial complexes}

In \autoref{sec:simplicial_collapse}, we defined \emph{free faces} and \emph{collapses}.

We now provide a more general definition of free faces, that allows for codimension greater than 1. 

\begin{definition}\label{def:free_pair}
    Let $X$ be a simplicial complex and let $\tau$ and $\sigma$ be simplices in $X$ such that
    \begin{enumerate}
        \item $\tau \subsetneq \sigma$, and
        \item all cofaces of $\tau$ are faces of $\sigma$.
    \end{enumerate}
    Then $\tau$ is called a \emph{free face} and $\{\tau, \sigma\}$ is a \emph{free pair}.
\end{definition}

Recall that in our old definition of free pairs, removing a free pair of $X$ from a simplicial complex constitutes an elementary collapse. Recall that we write $X \searrow Y$ if $Y$ can be reached through a series of elementary collapses on $X$. The following theorem justifies our generalization of the definition of free pairs.
\footnote{ \textcolor{black}{Note that these collapses fit into the framework of \emph{generalized Morse theory} (see, e.g., \cite{Bauer_2016}).}}

\begin{theorem}{{\cite[Proposition 9.18]{OrganizedCollapse}}}\label{thm:compound_collapse}
    Let $X$ be a simplicial complex and let $\{\tau, \sigma\}$ be a free pair according to \autoref{def:free_pair}. Let $Y \setminus [\tau, \sigma]$ be the simplicial complex where all simplices $\gamma$ satisfying $\tau \subseteq \gamma \subseteq \sigma$ have been removed from $X$. Then $Y$ is a simplicial complex, and $X \searrow Y$.
\end{theorem}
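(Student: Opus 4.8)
The plan is to treat the two assertions separately: first that the deletion $Y = X \setminus [\tau,\sigma]$, where $[\tau,\sigma] = \{\gamma : \tau \subseteq \gamma \subseteq \sigma\}$, is again a simplicial complex, and then that this deletion is realized by a finite sequence of ordinary (codimension-one) elementary collapses, so that $X \searrow Y$.

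For the first assertion I would check directly that $Y$ is closed under passage to faces. Suppose $\gamma \in Y$ and $\gamma' \subseteq \gamma$; since $X$ is a simplicial complex, $\gamma' \in X$, and it suffices to rule out $\gamma' \in [\tau,\sigma]$. If $\tau \subseteq \gamma' \subseteq \sigma$, then $\tau \subseteq \gamma' \subseteq \gamma$, so $\gamma$ is a coface of $\tau$; condition (2) of \autoref{def:free_pair} then forces $\gamma \subseteq \sigma$, and together with $\tau \subseteq \gamma$ this gives $\gamma \in [\tau,\sigma]$, contradicting $\gamma \in Y$. Hence $\gamma' \in Y$, and $Y$ is a simplicial complex.

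For the collapse, write $S = \sigma \setminus \tau$ (viewing simplices as their vertex sets), so $|S| = m := \dim\sigma - \dim\tau \ge 1$ and $[\tau,\sigma]$ is the Boolean lattice $\{\tau \cup A : A \subseteq S\}$. I would fix a vertex $v \in S$ and match each $\tau \cup A$ with $v \notin A$ to $\tau \cup A \cup \{v\}$; this partitions $[\tau,\sigma]$ into codimension-one pairs. I then delete these pairs in order of decreasing $|A|$, so that $\sigma$ together with its facet $\sigma \setminus \{v\}$ is removed first and $\tau$ together with $\tau \cup \{v\}$ last. The crucial point is that when the pair $\{\tau \cup A,\ \tau \cup A \cup \{v\}\}$ is reached, the lower simplex $\tau \cup A$ is a free face of the current complex whose unique remaining coface is $\tau \cup A \cup \{v\}$, so that each deletion is a genuine elementary collapse.

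The main obstacle is exactly this freeness verification, which is a bookkeeping argument on the chosen order. Condition (2) of \autoref{def:free_pair} guarantees that every proper coface of $\tau \cup A$ already lies in $[\tau,\sigma]$, hence has the form $\tau \cup A \cup B$ with $\emptyset \ne B \subseteq S \setminus A$. I would then show that every such coface other than the partner (that is, every $B \ne \{v\}$) has been deleted at a strictly earlier step: if $v \notin B$ it is removed as a lower element at step $|A| + |B| > |A|$, while if $v \in B$ then $|B| \ge 2$ and its partner places it at step $|A| + |B| - 1 > |A|$. Finally, pairs sharing a common value of $|A|$ involve pairwise distinct simplices and none is a coface of another, so the order within a single level is irrelevant. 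This shows each step is an elementary collapse and therefore $X \searrow Y$.
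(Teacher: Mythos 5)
The paper does not actually prove this statement: it is imported with a citation to Kozlov (Proposition 9.18 of the reference \emph{OrganizedCollapse}), so there is no internal proof to compare against. Your proposal supplies a correct, self-contained proof, and it is in substance the standard argument for this fact. The face-closure check for $Y$ is exactly right: if a face $\gamma'$ of some $\gamma \in Y$ satisfied $\tau \subseteq \gamma' \subseteq \sigma$, then $\gamma$ would be a coface of $\tau$, hence a face of $\sigma$ by condition (2) of \autoref{def:free_pair}, forcing $\gamma \in [\tau,\sigma]$, a contradiction. For the collapse, your perfect matching on the Boolean interval (pairing $\tau \cup A$ with $\tau \cup A \cup \{v\}$ for a fixed $v \in \sigma \setminus \tau$) together with the processing order of decreasing $|A|$ does make every step an elementary collapse: condition (2) confines all proper cofaces of $\tau \cup A$ to the interval, your two cases ($v \notin B$ versus $v \in B$, $B \neq \{v\}$) show that every coface other than the partner lies at a strictly higher level and has therefore already been deleted, and your observation that same-level pairs are pairwise incomparable settles the ordering within a level. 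Two points are implicit but worth stating if you write this up: first, since the pairs partition $[\tau,\sigma]$ and only higher levels have been processed, both members of the current pair are indeed still present when their turn comes; second, once $\tau \cup A$ has the single remaining proper coface $\tau \cup A \cup \{v\}$, that coface is automatically maximal in the current complex, so deleting the pair again yields a simplicial complex, which is what makes the step an elementary collapse in the sense of \autoref{sec:simplicial_collapse}.
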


Note that in the previous theorem, we might as well have written $\tau \subseteq \gamma$ instead of $\tau \subseteq \gamma \subseteq \sigma$, as all cofaces of $\tau$ are faces of $\sigma$.

An example of a simplicial collapse of the kind described in \autoref{thm:compound_collapse} is given in \autoref{fig:compound_collapse}.
\begin{figure}[htbp]
    \centering
    \begin{tikzpicture}[scale=.7]
        \filldraw[lightgray] (0,0) -- (4,0) -- (2, 3.4) -- cycle;
        
        \draw (0, 0) -- (4, 0);
        \draw (0, 0) -- (2, 3.4);
        \draw (4, 0) -- (2, 3.4);
        
        \filldraw (0, 0) circle (2 pt) node[anchor=north]{$a$};
        \filldraw (4, 0) circle (2 pt) node[anchor=north]{$b$};
        \filldraw (2, 3.4) circle (2 pt) node[anchor=east]{$c$};
        
        \draw[->, very thick] (0.75, 1.3) -- (1.75, 1.9);
        \draw[->, very thick] (0, 0) -- (2, 1.2);
        \draw[->, very thick] (1.5, 0) -- (2.5, 0.6);
        
        \draw[very thick, ->, double] (5.5, 1.7) -- (6.5, 1.7);
        
        \draw (11, 0) -- (9, 3.4);
        
        \filldraw (11, 0) circle (2 pt) node[anchor=east]{$b$};
        \filldraw (9, 3.4) circle (2 pt) node[anchor=east]{$c$};
    \end{tikzpicture}
    \caption{A simplicial collapse given by the free pair $\{\{a\}, \{a,b,c\}\}$.}
    \label{fig:compound_collapse}
\end{figure}
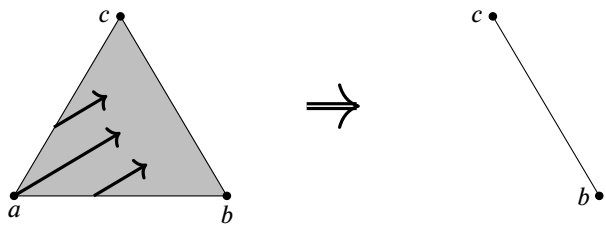

\subsection{Collapses on simplicial sets}

We generalize free pairs and collapses to simplicial sets. We will restrict ourselves to a certain class of simplicial sets, which we will name \emph{regular} simplicial sets.

\begin{definition}
    Let $X$ be a simplicial set. Then $X$ is called \emph{regular} if for all $n \in \mathbb{N}$ (including 0) and all nondegenerate $x \in X_n$, the map
    \begin{align*}
        \Hom_{\Delta}([0], [n]) &\to X_0 \\
        \theta &\mapsto X(\theta)(x)
    \end{align*}
    is injective.
\end{definition}

This definition says that for a nondegenerate $n$--simplex, all of the $(n+1)$ 0--dimensional faces are different. This also implies that for each nondegenerate $n$--simplex $x$ and all $m < n$, all of the $m$--dimensional faces of $x$ are different\footnote{More precisely, the map $\Hom_{\Delta}([m], [n]) \to X_m$ that maps $\theta$ to $X(\theta)(x)$ is injective.}, because no two $m$--dimensional faces have the same 0--dimensional faces. In other words, for $x \in X_n$ nondegenerate, the simplicial set consisting of $x$, all the faces of $x$, and all their degeneracies, is isomorphic to $\Delta^n$. This last part is the motivation behind the definition. Just as for simplicial complexes, the sub-simplicial set generated by a nondegenerate $n$--simplex $x$ looks like $\Delta^n$, which will allow us to define free pairs and collapses, just as for simplicial complexes.

\begin{example}
An example of a regular simplicial set is given in \autoref{fig:regular_sset}. Observe how there are two 1--simplices with $b$ and $c$ as faces, something that is not possible for simplicial complexes. However, just as for simplicial complexes, all 1--simplices lie between two different 0--simplices, and the 2--simplex $A$ looks like a 2--simplex in a simplicial complex. To make the last point more precise, the simplicial set consisting of $A$ and all its faces and degeneracies, is isomorphic to $\Delta^2$.

\begin{figure}[htbp]
    \centering
    \begin{tikzpicture}[scale=.7]
        \filldraw[lightgray] (0,0) -- (4, 0) .. controls (3.3, 0.4) and (2, 2.4) .. (2, 3.4) -- cycle;
        \draw (1.5, 1) node {$A$};
        
        \draw (0, 0) -- (4, 0) node[midway, anchor=north]{$\alpha$};
        \draw (0, 0) -- (2, 3.4) node[midway, anchor=east]{$\delta$};
        \draw (4, 0) .. controls (3.3, 0.4) and (2, 2.4) .. (2, 3.4) node[midway, anchor=east]{$\beta$};
        \draw (4, 0) .. controls (4, 1) and (2.7, 2.8) .. (2, 3.4) node[midway, anchor=west]{$\gamma$};
        
        \filldraw (0, 0) circle (2 pt) node[anchor=north]{$a$};
        \filldraw (4, 0) circle (2 pt) node[anchor=north]{$b$};
        \filldraw (2, 3.4) circle (2 pt) node[anchor=east]{$c$};
    \end{tikzpicture}
    \caption{A regular simplicial set (degeneracies omitted).}
    \label{fig:regular_sset}
\end{figure}
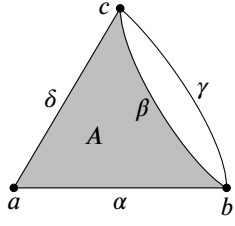
\end{example}

\begin{lemma}\label{lemma:regular_sset_realization}
    The realization of a regular simplicial set is a regular CW complex.
\end{lemma}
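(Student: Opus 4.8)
The plan is to use the standard CW structure on the geometric realization $|X|$ of a simplicial set, in which the $n$--cells are indexed by the nondegenerate $n$--simplices of $X$ (see \cite[Chapter I]{GoerssJardine}). Concretely, for each nondegenerate $x \in X_n$, the Yoneda lemma gives a classifying map $\bar x \colon \Delta^n \to X$, and the characteristic map of the corresponding cell is its realization $|\bar x| \colon |\Delta^n| \to |X|$, where $|\Delta^n| \cong D^n$. Recall that $|X|$ being a \emph{regular} CW complex is equivalent to requiring that every attaching map be a homeomorphism onto its image; since the characteristic map always restricts to a homeomorphism from the open disk onto its open cell, it suffices to show that each $|\bar x|$ is injective on all of $|\Delta^n|$, the boundary included. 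Indeed, once $|\bar x|$ is injective, compactness of $|\Delta^n|$ together with the Hausdorffness of $|X|$ forces $|\bar x|$ to be a homeomorphism onto its image, and restricting to $S^{n-1} = \partial D^n$ then exhibits the attaching map as a homeomorphism onto its image.

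First I would invoke the observation recorded just before the lemma statement: since $X$ is regular, the sub-simplicial set $\langle x\rangle$ generated by a nondegenerate $x \in X_n$ — that is, $x$ together with all its faces and their degeneracies — is isomorphic to $\Delta^n$, with the isomorphism induced by $\bar x$. Hence the classifying map factors as
\[
\Delta^n \xrightarrow{\ \cong\ } \langle x\rangle \hookrightarrow X,
\]
where the second arrow is the inclusion of a sub-simplicial set. Applying geometric realization, $|\bar x|$ factors as a homeomorphism $|\Delta^n| \xrightarrow{\cong} |\langle x\rangle|$ followed by the realization $|\langle x\rangle| \hookrightarrow |X|$ of this inclusion. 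Thus the injectivity of $|\bar x|$ reduces to the injectivity of the realization of the monomorphism $\langle x\rangle \hookrightarrow X$.

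The step requiring the most care — and the main obstacle — is precisely the fact that geometric realization sends monomorphisms of simplicial sets to injective maps. I would establish this using the normal form for points of a realization: every point of $|Y|$ has a unique representative of the form $(y,u)$ with $y$ a nondegenerate simplex and $u$ an interior point of the associated geometric simplex (\cite[Chapter I]{GoerssJardine}). A monomorphism $i \colon A \to B$ is injective on simplices and carries nondegenerate simplices to nondegenerate simplices (if $i(a) = s_j c$ then $a = s_j d_j a$, contradicting nondegeneracy), so it respects normal forms and is therefore injective on realizations. Combining this with the factorization above shows each $|\bar x|$ is injective, and the compactness–Hausdorff argument then upgrades each characteristic map to a homeomorphism onto its closed cell with attaching map a homeomorphism onto its image. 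Hence $|X|$ is a regular CW complex.
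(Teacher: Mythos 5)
Your proof is correct and follows essentially the same route as the paper: both rely on the standard CW structure on $|X|$ with one $n$--cell per nondegenerate simplex (the paper cites May's Theorem 14.1, you cite Goerss--Jardine) together with the observation, recorded just before the lemma, that the sub-simplicial set generated by a nondegenerate simplex of a regular simplicial set is isomorphic to $\Delta^n$. The only difference is that you make explicit the step the paper leaves implicit --- that realization carries the monomorphism $\langle x\rangle \hookrightarrow X$ to an injective map, via the normal-form argument --- which makes your write-up more self-contained but does not change the approach.
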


\begin{proof}
    Let $X$ be a regular simplicial set. Recall that a CW complex is regular if all attaching maps are homeomorphisms onto their image. From \cite[Theorem 14.1]{May}, we know that $|X|$ is a CW complex with one $n$--cell for each nondegenerate simplex. Furthermore, for an $n$--cell corresponding to a nondegenerate $n$--simplex $x$, the attaching map is given by sending $|\partial \Delta^n|$ to $|\partial x|$ (in the canonical way). Now, from the observations in the above paragraph, we know that this map is a homeomorphism, so we are finished.
\end{proof}

We introduce some new notation. We say that $\tau \in X_m$ is a \emph{face} of $\sigma \in X_n$ if $\tau = X(\theta)(\sigma)$ for some $\theta \in \Hom_{\Delta}([m], [n])$. Note that in this definition, degeneracies are also considered faces, but the nondegenerate faces are just as for simplicial complexes. We say that $\sigma$ is a coface of $\tau$ when $\tau$ is a face of $\sigma$. We write $\tau \subseteq \sigma$ when $\tau$ is a face of $\sigma$, and we write $\tau \subsetneq \sigma$ when $\tau$ is a \emph{proper} face of $\sigma$, i.e., when $\tau \subseteq \sigma$ and $\tau \neq \sigma$. It's clear that this relation is transitive, as if $\tau \subseteq \tau'$ and $\tau' \subseteq \tau''$, then $\tau = X(\theta)(\tau')$ and $\tau' = X(\theta')(\tau'')$ for some $\theta, \theta'$, so $\tau = X(\theta \circ \theta')(\tau'')$. It's not, however, antisymmetric, as all simplices are faces of all their degeneracies.

\begin{example}
In the simplicial set in \autoref{fig:regular_sset}, the faces of $A$ are
\begin{itemize}
    \item $A$ itself,
    \item the degeneracies of $A$,
    \item the nondegenerate simplices $a$, $b$, $c$, $\alpha$, $\beta$ and $\delta$,
    \item the degeneracies of the above mentioned simplices.
\end{itemize}
The simplex $\gamma$, on the other hand, is \emph{not} a face of $A$.
\end{example}

\begin{definition}\label{def:free_pair_sset}
    Let $X$ be a regular simplicial set, and let $\tau$ and $\sigma$ be nondegenerate simplices such that
    \begin{enumerate}
        \item $\tau \subsetneq \sigma$, and
        \item all cofaces of $\tau$ are faces of $\sigma$.
    \end{enumerate}
    Then $\tau$ is called a \emph{free face} and $\{\tau, \sigma\}$ is a \emph{free pair}.
\end{definition}

Note that the fact that $\tau$ and $\sigma$ are nondegenerate and that $\tau \subsetneq \sigma$ implies that $\dim \tau < \dim \sigma$.

\begin{theorem}
    Let $X$ be a simplicial set and let $\{\tau, \sigma\}$ be a free pair according to \autoref{def:free_pair}. Let $Y \setminus [\tau, \sigma]$ be the simplicial set where all simplices $\gamma$ satisfying $\tau \subseteq \gamma \subseteq \sigma$ have been removed from $X$. Then $Y$ is a simplicial set, and the realization $|Y|$ is a deformation retract of $|X|$.
\end{theorem}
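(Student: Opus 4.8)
The plan is to prove the two assertions separately: first that $Y$ is a sub-simplicial set of $X$, and then that $|Y|$ is a (strong) deformation retract of $|X|$.

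For the first part, I would begin by rewriting the set of removed simplices. Since $\{\tau, \sigma\}$ is a free pair, condition (2) of \autoref{def:free_pair_sset} says that every coface of $\tau$ is a face of $\sigma$; hence the set $\{\gamma : \tau \subseteq \gamma \subseteq \sigma\}$ of removed simplices coincides with the set of all cofaces of $\tau$, so that
\begin{equation*}
    Y = \{\gamma \in X : \tau \not\subseteq \gamma\}.
\end{equation*}
To see that $Y$ is a sub-simplicial set it suffices to check that it is closed under every structure map $X(\theta)$, i.e., that every face of an element of $Y$ (in the general sense, including degeneracies) again lies in $Y$. This is immediate from transitivity of the relation $\subseteq$: if $\delta \subseteq \gamma$ with $\gamma \in Y$, then $\tau \subseteq \delta$ would force $\tau \subseteq \gamma$, contradicting $\gamma \in Y$. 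Thus $Y$ is closed under all face and degeneracy maps, and is therefore a simplicial set.

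For the second part, the key point is that all of the removed cells are confined to a single simplex. By regularity (as recorded in the paragraph following the definition of regular simplicial sets), the sub-simplicial set $\langle \sigma \rangle$ generated by $\sigma$ is isomorphic to $\Delta^n$, where $n = \dim \sigma$, so by \autoref{lemma:regular_sset_realization} its realization is a closed regular CW subcomplex $|\langle \sigma \rangle| \cong |\Delta^n|$ of $|X|$. Every removed nondegenerate simplex is a coface of $\tau$, hence a face of $\sigma$, and so lies in $\langle \sigma \rangle$; consequently $|X| = |\langle \sigma \rangle| \cup |Y|$, and the two closed subcomplexes meet exactly in $|B|$, where $B$ is the subcomplex of faces of $\sigma$ that do not contain $\tau$ (these are precisely the faces of $\sigma$ surviving in $Y$). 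I would then produce the deformation retraction locally and glue. Viewing $\Delta^n$ as a simplicial complex, $\{\tau, \sigma\}$ is a free pair in the sense of \autoref{def:free_pair} (every coface of $\tau$ in $\Delta^n$ is trivially a face of the top simplex $\sigma$), so \autoref{thm:compound_collapse} gives $\Delta^n \searrow B$. As noted in \autoref{sec:simplicial_collapse}, such a collapse yields a deformation retraction, and it may be taken to be \emph{strong}, i.e., to fix $|B|$ pointwise throughout; call it $H \colon |\Delta^n| \times [0,1] \to |\Delta^n|$. I would extend $H$ to $|X| \times [0,1]$ by declaring it constant on $|Y|$. Because $H$ fixes $|B| = |\langle \sigma \rangle| \cap |Y|$ pointwise, the two definitions agree on the overlap $|B| \times [0,1]$, and since $|\langle \sigma \rangle|$ and $|Y|$ are closed subcomplexes covering $|X|$, the pasting lemma shows the extension is continuous. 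This extended homotopy is the identity at time $0$, retracts $|X|$ onto $|B| \cup |Y| = |Y|$ at time $1$, and fixes $|Y|$ throughout, exhibiting $|Y|$ as a deformation retract of $|X|$.

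The main obstacle I anticipate lies in this last step rather than in the essentially formal first two: one must be sure that the collapse of $\Delta^n$ can be chosen as a \emph{strong} deformation retraction fixing the boundary piece $|B|$, since this is exactly what makes the extension by the constant homotopy continuous across the overlap. Establishing that the removed cells really are localized inside $|\langle \sigma \rangle|$ — which is where the free-pair hypothesis does its work — is the conceptual crux that reduces the simplicial-set statement to the already-established simplicial-complex collapse.
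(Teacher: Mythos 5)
Your proposal is correct and follows essentially the same route as the paper: first showing $Y$ is closed under faces via transitivity of the face relation, then localizing the collapse to the closure of the cell of $\sigma$ (your $|\langle \sigma \rangle|$, the paper's $\overline{e_{\sigma}} \cong |\Delta^n|$), performing the simplicial-complex deformation retraction there, and extending by the identity on $|Y|$. Your treatment is in fact slightly more careful than the paper's, which glosses over the strongness of the retraction on $|B|$ and the pasting-lemma argument that you spell out.
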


\begin{proof}
    We first prove that $Y$ is a simplicial set. We must show that for all $x \in Y$, all faces (including degeneracies) of $x$ are contained in $Y$. Let $x$ be an $n$--simplex in $Y$, and suppose $y$ is a face of $x$ not in $Y$. Then $y$ is a coface of $\tau$. But then $x$ is also a coface of $\tau$ (and hence also a face of $\sigma)$, so $x \notin Y$, which is a contradiction. This proves that $Y$ is a well-defined simplicial set.

    We now show that $|Y|$ is a deformation retract of $|X|$. From \autoref{lemma:regular_sset_realization}, we know that $|X|$ is a regular CW complex, and so is $|Y|$. Let $e_{\tau}$ and $e_{\sigma}$ denote the cells in $|X|$ corresponding to $\tau$ and $\sigma$, respectively. Then $\overline{e_{\sigma}}$ (i.e., the closure of $e_{\sigma}$) is homeomorphic to $|\Delta^n|$ (for some $n$), and under this homeomorphism, $e_{\tau}$ maps to some face of $|\Delta^n|$. Thus, $\overline{e_{\sigma}}$ deformation retracts to $\left( \overline{e_{\sigma}} \setminus \{e' : e_{\tau} \subseteq e'\} \right)$ (i.e., $\overline{e_{\sigma}}$ with all the cofaces of $e_{\tau}$ removed), just as for simplicial complexes. This deformation retract then extends to a deformation retract from $|X|$ to $|Y|$ by setting it to be identity everywhere else.
\end{proof}

When $X$ deformation retracts to $Y$ in this way, we call it a \emph{collapse}, and say that $X$ \emph{collapses} to $Y$.

\begin{example}
In \autoref{fig:sset_collapse}, $\{a, A\}$ is a free pair in the regular simplicial set on the left hand side. The right hand side shows the simplicial set after the corresponding collapse. Observe how the collapse gives a deformation retract on the closure of the 2--cell corresponding to $A$, which extends to a deformation retract on the entire realization by defining it to be identity everywhere else.
    
\begin{figure}[htbp]
    \centering
    \begin{tikzpicture}[scale=.7]
        \filldraw[lightgray] (0,0) -- (4, 0) .. controls (3.3, 0.4) and (2, 2.4) .. (2, 3.4) -- cycle;
        \draw (1.5, .7) node {$A$};
        
        \draw (0, 0) -- (4, 0) node[midway, anchor=north]{$\alpha$};
        \draw (0, 0) -- (2, 3.4) node[midway, anchor=east]{$\delta$};
        \draw (4, 0) .. controls (3.3, 0.4) and (2, 2.4) .. (2, 3.4) node[midway, anchor=east]{$\beta$};
        \draw (4, 0) .. controls (4, 1) and (2.7, 2.8) .. (2, 3.4) node[midway, anchor=west]{$\gamma$};
        
        \filldraw (0, 0) circle (2 pt) node[anchor=north]{$a$};
        \filldraw (4, 0) circle (2 pt) node[anchor=north]{$b$};
        \filldraw (2, 3.4) circle (2 pt) node[anchor=east]{$c$};
        
        \draw[->, very thick] (0.75, 1.3) -- (1.75, 1.9);
        \draw[->, very thick] (0, 0) -- (2, 1.2);
        \draw[->, very thick] (1.5, 0) -- (2.5, 0.6);
        
        \draw[very thick, ->, double] (5.5, 1.7) -- (6.5, 1.7);
        
        \draw (11, 0) .. controls (10.3, 0.4) and (9, 2.4) .. (9, 3.4) node[midway, anchor=east]{$\beta$};
        \draw (11, 0) .. controls (11, 1) and (9.7, 2.8) .. (9, 3.4) node[midway, anchor=west]{$\gamma$};
        
        \filldraw (11, 0) circle (2 pt) node[anchor=east]{$b$};
        \filldraw (9, 3.4) circle (2 pt) node[anchor=east]{$c$};
    \end{tikzpicture}
    \caption{A collapse of a regular simplicial set.}
    \label{fig:sset_collapse}
\end{figure}
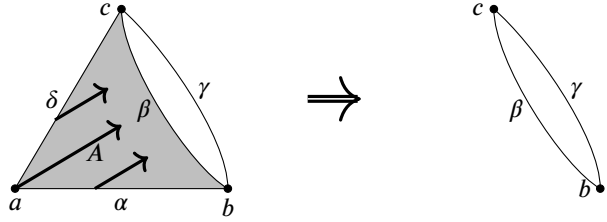
\end{example}

\subsection{Unique factorization categories}

We now define a certain class of categories, which we will call \emph{unique factorization categories}. In these categories, all morphism can be written as a composition of one or more \emph{indecomposable} morphisms in a unique way. We shall see that these categories have nerves that are regular simplicial sets, and with many free faces.
\footnote{ \textcolor{black}{Note that for the categories we consider in this section, we could have chosen to work with a different notion of nerve, that gives a \emph{semi-simplicial set} (see, e.g., \cite[Chapter 10.2]{KozlovCombinatorial}). However, to not change paradigms within the article, we choose to stick to simplicial sets.}}

\begin{definition}
    Let $C$ be a category and $f \colon A \to B$ a morphism in $C$. Then $f$ is called \emph{indecomposable} if it cannot be written as a composite of non-identity morphisms, i.e., if there exist no morphisms $h \neq \id_B$ and $g \ne \id_A$ such that $f = h \circ g$.
\end{definition}

We say that a category is \emph{finite} if both its set of objects and set of morphisms is finite.

\begin{definition}\label{def:finite_directed_cat}
    A \emph{finite directed category} is a finite category such that:
    \begin{itemize}
        \item For all objects $a$, $\Hom(a,a)$ contains only the identity $\id_a$.
        \item For all objects $a \neq b$, if $\Hom(a, b)$ is nonempty, then $\Hom(b,a) = \emptyset$.
    \end{itemize}
\end{definition}

Any finite poset (viewed as a category) is an example of a finite directed category.

It's clear that for a finite directed category, each morphism $f$ can be written as a composition $f = f_k \circ \cdots \circ f_1$ of indecomposable morphisms. However, its possible that there are several different ways to do this. For example, consider the face poset of the simplicial complex $\Delta^2$. Here, the morphism $(\{0\} \le \{0,1,2\})$ has two decompositions: $(\{0,1\} \le \{0,1,2\}) \circ (\{0\} \le \{0,1\})$ and $(\{0,2\} \le \{0,1,2\}) \circ (\{0\} \le \{0,2\})$

\begin{proposition}
    Let $C$ be a finite directed category. Then the nerve of $C$ is a regular simplicial set.
\end{proposition}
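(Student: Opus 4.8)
The plan is to unwind both definitions and reduce the regularity condition to a statement forbidding nontrivial loops in $C$. Recall that an $n$--simplex of $\N(C)$ is a composable chain of morphisms
\[
\sigma = \left(a_0 \xrightarrow{f_1} a_1 \xrightarrow{f_2} \cdots \xrightarrow{f_n} a_n\right),
\]
and that such a simplex is nondegenerate precisely when none of the $f_i$ is an identity. Under the identification of $\Hom_{\Delta}([0],[n])$ with the set $\{0,1,\dots,n\}$ of vertices, the map $\theta \mapsto \N(C)(\theta)(\sigma)$ appearing in the definition of a regular simplicial set sends $i$ to the object $a_i$. Hence proving that $\N(C)$ is regular is equivalent to showing that every nondegenerate simplex has pairwise distinct vertices: if all the $f_i$ are non-identities, then $a_0, a_1, \dots, a_n$ are all distinct.

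I would prove this in its contrapositive form. Suppose $a_i = a_j$ for some $0 \le i < j \le n$; I will exhibit an identity among the $f_k$, so that $\sigma$ is degenerate. Consider the first arrow $f_{i+1}\colon a_i \to a_{i+1}$ of the segment joining $a_i$ to $a_j$. If $a_{i+1} = a_i$, then $f_{i+1} \in \Hom(a_i, a_i) = \{\id_{a_i}\}$ by the first axiom of a finite directed category, so $f_{i+1}$ is an identity and we are done. Otherwise $a_{i+1} \neq a_i$; then $\Hom(a_i, a_{i+1})$ is nonempty, so the second axiom forces $\Hom(a_{i+1}, a_i) = \emptyset$. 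But since $a_{i+1} \neq a_i = a_j$ we must have $j \ge i+2$, and the composite $f_j \circ \cdots \circ f_{i+2}$ is then a morphism $a_{i+1} \to a_j = a_i$, contradicting the emptiness of $\Hom(a_{i+1}, a_i)$. Thus this case cannot occur, and some $f_k$ is necessarily an identity, completing the contrapositive.

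The argument is essentially routine, and I do not expect a serious obstacle: the only real content is the observation that the two directedness axioms together rule out nontrivial loops. The two points that require care are the bookkeeping that identifies the $0$--faces of $\sigma$ with the objects $a_i$, and the case split on whether the first arrow of a repeated segment is itself an identity (the case $j = i+1$ being absorbed into the first subcase). Finiteness of $C$ is not needed for this step beyond what is already encoded in the directed structure.
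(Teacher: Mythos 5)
Your proposal is correct and takes essentially the same approach as the paper: reduce regularity to the claim that a nondegenerate simplex (a chain of composable non-identity morphisms) has pairwise distinct vertices, and rule out a repeated vertex $a_i = a_j$ by using the first axiom when the repetition forces a non-identity endomorphism and the second axiom when a composite of intermediate arrows would make both $\Hom(a_i,a_{i+1})$ and $\Hom(a_{i+1},a_i)$ nonempty. Your case split on whether $a_{i+1} = a_i$ (rather than the paper's split on $j = i+1$ versus $j > i+1$) is a minor variation that, if anything, handles the corner case of an intermediate loop slightly more explicitly.
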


\begin{proof}
    A nondegenerate $n$--simplex in $\N(C)$ is a set of composable non-identity morphisms $\{f_i \colon x_i \to x_{i+1} : 0 \le i < n\}$. Suppose $x_i = x_j$ for some $i < j$. If $j = i+1$, then $f_i$ is a non-identity morphism in $\Hom(x_i, x_i)$, which contradicts \autoref{def:finite_directed_cat}. If $j > i+1$, then both $\Hom(x_i, x_{i+1})$ and $\Hom(x_{i+1}, x_i)$ are nonempty, which also contradicts \autoref{def:finite_directed_cat}.

    Hence, any nondegenerate $n$--simplex in $\N(C)$ has $n+1$ different 0--dimensional faces, and so $\N(C)$ is regular.
\end{proof}

\begin{definition}
    A \emph{unique factorization category} is a finite directed category $C$ such that all non-identity morphisms $f$ in $C$ can be written as a composition of non-identity indecomposable morphisms in a unique way.
\end{definition}

\begin{example}\label{ex:unique_factorization_cats}
Let $C$ be the entrance path category of a finite CW complex $X$ considered as an ordinary category, i.e., with the poset structure of the Hom sets removed. Then $C$ is a unique factorization category. To see this, observe that each non-identity morphism goes to a cell of strictly lower dimension, so $C$ is finite directed. Furthermore, an entrance path $(x_0 > \dots > x_k)$ factors uniquely to $(x_k > x_{k-1}) \circ \dots \circ (x_0 > x_1)$.

Similarly, we can show that a discrete flow category of a finite CW complex, with the poset structure of the Hom sets removed, is a unique factorization category. The factorization of a gradient path is given by subsequences between critical cells. As each non-identity morphism goes from a critical cell to a critical cell of lower dimension, this gives a well-defined factorization into indecomposable morphisms. Now, to see that this factorization is unique, suppose we have $f, f' \colon a \to b$ and $g, g' \colon b \to c$ with $g \circ f = g' \circ f'$. Then the sequence representations of $g \circ f$ and $g' \circ f'$ are equal, and as $b$ is critical, both sequence representations contain $b$. Now, the subsequences starting at $a$ and ending at $b$ have to be equal, and these are precisely the sequence representations of $f$ and $f'$, so $f = f'$. Similarly, $g = g'$. This shows that factorizations are unique.

To give an example, in the discrete flow category in \autoref{ex:D3_flo}, the morphism $[f > t > z < b > x]$ has the unique factorization $[t > z < b > x] \circ [f > t]$.
\end{example}

We now prove \autoref{theoremC}.
\begin{theorem}[Theorem C]
    Let $C$ be a unique factorization category. Then the nerve of $C$ deformation retracts to a simplicial set whose $n$--simplices are all degenerate for $n > 1$.
\end{theorem}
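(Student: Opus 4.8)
The plan is to build an explicit acyclic matching on the nondegenerate simplices of $\N(C)$ whose unmatched (critical) simplices are exactly the objects and the indecomposable morphisms, and then to realize this matching as a sequence of simplicial collapses using the machinery for regular simplicial sets developed above. Since a unique factorization category is finite directed, the proposition above shows $\N(C)$ is a regular simplicial set, and by \autoref{lemma:regular_sset_realization} its realization is a regular CW complex; this is what makes the collapse theory applicable. For a non-identity morphism $f$ write $\ell(f)$ for the number of factors in its unique factorization into indecomposables, and set $\ell(\id) = 0$. A nondegenerate $n$--simplex is a chain $\sigma = (f_1, \ldots, f_n)$ of composable non-identity morphisms, and I record the invariant $L(\sigma) = \sum_{i=1}^{n} \ell(f_i)$, the length of the fully refined chain obtained by replacing each $f_i$ by its factorization.

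I would define the matching by acting on the last morphism. If $f_n$ is decomposable, write $f_n = a \circ g$ where $a$ is the last-applied indecomposable factor and $g \neq \id$, and send $\sigma$ to the $(n+1)$--simplex $(f_1, \ldots, f_{n-1}, g, a)$; if $f_n$ is indecomposable and $n \geq 2$, send $\sigma$ to the $(n-1)$--simplex $(f_1, \ldots, f_{n-2}, f_n \circ f_{n-1})$ obtained by composing the last two morphisms. These two operations are mutually inverse, namely splitting off the last-applied indecomposable and merging it back, so this is a genuine matching; it pairs each simplex with $L(\sigma) \geq 2$ with a partner of adjacent dimension, while leaving unmatched exactly the simplices with $L(\sigma) \leq 1$, i.e.\ the objects (dimension $0$) and the indecomposable morphisms (dimension $1$). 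The key bookkeeping observation is that the matched partner always shares the same refined chain, so the matching preserves $L$.

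The central point is acyclicity, which I would prove by showing that $L$ strictly decreases along every gradient path (an alternating sequence of non-matched codimension--one faces and matched cofaces). Starting from an upper simplex $\sigma = (f_1, \ldots, f_n)$, so $f_n$ is indecomposable and $n \geq 2$, one checks that the only codimension--one face which is itself a lower partner and is distinct from the matched face $(f_1, \ldots, f_{n-2}, f_n \circ f_{n-1})$ is the outer face $d_n \sigma = (f_1, \ldots, f_{n-1})$: every inner face and also $d_0 \sigma$ still ends in the indecomposable $f_n$ and is therefore again an upper simplex, so it cannot begin a downward step. Since $d_n$ drops $f_n$ with $\ell(f_n) = 1$, each full step of a gradient path lowers $L$ by exactly one while the following matching step preserves $L$. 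Hence no gradient path can close up, and the matching is acyclic.

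Finally I would convert the acyclic matching into the desired deformation retract. Ordering the matched pairs by decreasing $L$ and, within a fixed refined chain, by the standard collapsing order of the Boolean lattice of cut positions that the matching induces, each matched pair becomes a free pair at the stage it is collapsed, so the collapse theorem for regular simplicial sets applies repeatedly and yields a deformation retraction of $|\N(C)|$ onto the subcomplex spanned by the critical simplices. As these live only in dimensions $0$ and $1$, every nondegenerate simplex of dimension at least $2$ is removed, so the resulting simplicial set has all $n$--simplices degenerate for $n > 1$. I expect the main obstacle to be exactly this last conversion: verifying that the acyclic matching can be executed as an ordered sequence of free-pair collapses in the simplicial-set setting, that is, that all competing cofaces of each lower partner have already been removed by the time its pair is collapsed, rather than the combinatorics of the matching itself.
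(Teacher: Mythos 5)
Your proof is correct, but it follows a genuinely different route from the paper's. The paper works directly with the generalized (codimension greater than one) free pairs of \autoref{def:free_pair_sset}: processing decomposable morphisms $f$ in decreasing order of the number $k$ of indecomposable factors, it observes that maximality of $k$ together with uniqueness of factorization forces every coface of the $1$--simplex $f$ to be a face of the $k$--simplex $(f_1, \dots, f_k)$ given by the full factorization of $f$, so $\{f, (f_1, \dots, f_k)\}$ is a free pair and a single collapse removes the entire interval between them at once. You instead build a codimension-one acyclic matching in the style of discrete Morse theory (split off, or merge back, the last indecomposable factor of the last entry), identify the critical simplices as the objects and the indecomposable morphisms, prove acyclicity via the invariant $L$, and then execute the matching as an ordered sequence of elementary collapses. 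The two arguments are closely related: restricted to the simplices sharing a fixed refined chain, your matching is exactly the standard matching on the Boolean lattice of cut positions, so your final step amounts to re-proving by hand, inside the simplicial-set framework, the interval-collapse statement (the analogue of \autoref{thm:compound_collapse}) that the paper's proof invokes in one stroke; what you gain is an explicit Morse-theoretic structure using only elementary codimension-one collapses, while the paper gains brevity. Finally, the obstacle you flag does resolve with your ordering: every coface of a lower partner $\sigma$ either has the same refined chain as $\sigma$, in which case it is a refinement of $\sigma$ and is handled by the reverse-inclusion order on cut sets, or it has strictly larger $L$, in which case it lies in a pair processed earlier; hence, by induction along your ordering, each pair satisfies the free-pair condition of \autoref{def:free_pair_sset} at the moment it is collapsed. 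Alternatively, you could replace this bookkeeping by citing the standard fact that an acyclic matching on a finite regular CW complex whose critical cells form a subcomplex induces a collapse onto that subcomplex \cite{OrganizedCollapse}.
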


\begin{proof}
    The main idea of the proof is to collapse $\N(C)$ by removing 1--simplices represented by higher compositions.

    We collapse $\N(C)$ inductively. Let $k$ be the largest integer such that there is a 1--simplex in $\N(C)$ represented by a morphism $f$ that decomposes into $k$ indecomposable morphisms, i.e., $f = f_k \circ \dots \circ f_1$. We claim that $f$ is a free face, and $f$ is in a free pair with the $k$--simplex $\{f_1, \dots, f_k\}$. To see why this is true, let $\{g_1, \dots, g_m\}$ be a coface of $f$. Then $g = g_m \circ \dots \circ g_1$ is a 1--simplex that factors through $f$, and thus $g = f$, as otherwise, $g$ would factor into more than $k$ indecomposable morphisms. It follows that each $g_i$ is a composite $g_i = f_{j_{i+1}-1} \circ \dots \circ f_{j_i}$, and hence, $\{g_1, \dots, g_m\}$ is a face of $\{f_1, \dots, f_k\}$.

    Now, we can remove any $k$--composable 1--simplex $f$ from the simplicial set through a series of collapses. Now, all remaining 1--simplices decomposes into $k-1$ or fewer indecomposable factors. We can repeat the process with $k-1$, and so on, until all remaining 1--simplices are indecomposable morphisms. Now, all remaining nondegenerate simplices are of dimension 0 or 1, as any higher simplex $\{f_1, \dots, f_k\}$ has the decomposable morphism $f_k \circ \dots \circ f_1$ as a face.
\end{proof}

By combining this theorem with \autoref{thm:simpl_homology_degen}, we get the following corollary.

\begin{corollary}\label{cor:ufc_homology}
    Let $C$ be a unique factorization category. Then
    \begin{equation*}
        H_n(\N(C)) \cong 0
    \end{equation*}
    for $n \ge 2$.
\end{corollary}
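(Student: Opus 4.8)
The plan is to chain together the two cited results. First I would invoke Theorem C to produce a simplicial set $Y$ to which $\N(C)$ collapses and whose nondegenerate simplices all lie in dimension $0$ or $1$; equivalently, every $n$--simplex of $Y$ is degenerate for $n > 1$. Since a collapse of simplicial sets yields a deformation retract $|\N(C)| \to |Y|$ of the realizations, and the (unnormalized) simplicial homology of a simplicial set agrees with the singular homology of its realization, it follows that $H_n(\N(C)) \cong H_n(Y)$ for all $n$.

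Next I would analyze $H_n(Y)$ by means of \autoref{thm:simpl_homology_degen}. Write $C_Y$ for the unnormalized simplicial chain complex of $Y$ and $D_Y$ for its subcomplex of degeneracies, so that by definition $H_n(Y) = H_n(C_Y)$. Then \autoref{thm:simpl_homology_degen} gives an isomorphism $H_n(C_Y) \cong H_n(C_Y / D_Y)$. But for $n > 1$ every $n$--simplex of $Y$ is degenerate, so $(C_Y)_n = (D_Y)_n$ and hence the chain group $(C_Y / D_Y)_n$ is zero. In particular, for each $n \ge 2$ we have $(C_Y/D_Y)_n = 0$, and since $H_n$ is a subquotient of the $n$th chain group, this forces $H_n(C_Y/D_Y) = 0$.

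Assembling these steps gives $H_n(\N(C)) \cong H_n(Y) = H_n(C_Y) \cong H_n(C_Y/D_Y) = 0$ for all $n \ge 2$, as claimed. The only point requiring genuine care is the first step, namely verifying that the collapse furnished by Theorem C induces an isomorphism on the homology groups in question; this is exactly where one uses that homology is computed from the unnormalized chain complex together with the homotopy invariance of the singular homology of the realizations. Everything after that is routine bookkeeping at the level of chain groups, since the degeneracy of all higher simplices makes the normalized complex of $Y$ vanish above dimension $1$.
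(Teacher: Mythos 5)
Your argument is correct and is exactly the route the paper intends: it combines Theorem C (to collapse $\N(C)$ onto a simplicial set $Y$ whose simplices above dimension $1$ are all degenerate, using homotopy invariance of realizations) with \autoref{thm:simpl_homology_degen} (to conclude that the homology of $Y$ vanishes in degrees $n \ge 2$, since the quotient chain complex $C_Y/D_Y$ is zero there). The paper states the corollary with precisely this one-line justification, so your write-up simply makes the same bookkeeping explicit.
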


\section{Spectral sequences}

We define a spectral sequence as a collection of bigraded differential abelian groups $\{E^r_{*,*}, \partial^r : r = 0,1,\dots\}$, where the bidegree of $\partial^r$ is $(-r, r-1)$. For a double complex $C$ we denote its horizontal differentials with $\partial_h \colon C_{*,*} \to C_{*-1,*}$ and its vertical differentials with $\partial_v \colon C_{*,*} \to C_{*,*-1}$. Furthermore, we let $\Tot C$ denote the total complex of $C$, with the convention that its differentials are given by $\partial^{\Tot}_n = \partial_v + (-1)^{\text{vertical degree}} \partial_h$.

We will use the two spectral sequences in the following theorem, which is the homological version of Theorem 2.15 in \cite{McCleary} (see also \cite[pp.\ 141--143]{Weibel} for the homological formulation).

\begin{theorem}\label{thm:dbl_cplx_spec_seq}
    Let $C$ be a double complex such that $C_{p,q} = 0$ whenever $p < 0$ or $q < 0$. Then there are two spectral sequences $_I E$ and $_{II} E$ with
    \begin{equation*}
        _I E^2_{p,q} \cong H_I H_{II} (C)_{p,q}, \quad \text{and} \quad _{II} E^2_{p,q} \cong H_{II} H_{I} (C)_{p,q},
    \end{equation*}
    that both converge to $H_*(\Tot C)$.
\end{theorem}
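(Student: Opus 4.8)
The plan is to reduce the statement to the standard machinery of the spectral sequence associated to a bounded filtration of a chain complex. Recall that any filtration $F_\bullet$ of a chain complex $(K_\bullet, \partial)$ produces a spectral sequence with $E^0_{p,q} \cong F_p K_{p+q} / F_{p-1} K_{p+q}$, and that whenever the filtration is \emph{bounded} in each total degree, this spectral sequence converges to $H_*(K)$. I would apply this general result, which is exactly the content of the theorem cited from \cite{McCleary}, to $K = \Tot C$ equipped with each of its two canonical filtrations, and then read off the early pages.

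For $_I E$ I would filter $\Tot C$ by columns, setting $F_p(\Tot C)_n = \bigoplus_{i \le p} C_{i,\,n-i}$. Because $C$ is concentrated in the first quadrant, in each total degree $n$ one has $F_{-1}(\Tot C)_n = 0$ and $F_n(\Tot C)_n = (\Tot C)_n$, so the filtration is bounded and the associated spectral sequence converges to $H_*(\Tot C)$. The associated graded in bidegree $(p,q)$ is $C_{p,q}$, and the only summand of $\partial^{\Tot} = \partial_v + (-1)^{\text{vertical degree}}\partial_h$ that preserves the column index $p$ is $\partial_v$ (with coefficient $+1$); hence $d^0 = \partial_v$ and $_I E^1_{p,q} \cong H_q^{\partial_v}(C_{p,\ast}) = H_{II}(C)_{p,q}$. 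The induced $d^1$ is then the map coming from the remaining summand $\partial_h$, so passing to homology once more gives $_I E^2_{p,q} \cong H_p^{\partial_h}\!\big(H_{II}(C)\big)_{p,q} = H_I H_{II}(C)_{p,q}$, which is the claimed identification; tracking column and total degree shows $d^r$ has bidegree $(-r, r-1)$ as required. The second spectral sequence $_{II} E$ is obtained symmetrically by instead filtering by rows, $F_q(\Tot C)_n = \bigoplus_{j \le q} C_{\,n-j,\,j}$, which is the same argument run on the transposed double complex; it yields $_{II} E^2_{p,q} \cong H_{II} H_I(C)_{p,q}$ and convergence to $H_*(\Tot C)$ by the same boundedness argument.

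Since the construction and convergence of the spectral sequence of a bounded filtration are entirely standard, the only genuine content is bookkeeping. I expect the one place requiring care to be the clean separation of $\partial^{\Tot}$ into its column-preserving and column-lowering parts: one must check that the sign convention $(-1)^{\text{vertical degree}}$ places all of $d^0$ in $\partial_v$ and all of $d^1$ in the induced $\partial_h$ (the sign is irrelevant once one passes to homology), and dually for the row filtration. Everything else, in particular convergence, follows immediately from the first-quadrant hypothesis, which guarantees boundedness of both filtrations in each total degree.
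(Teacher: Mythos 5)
The paper offers no proof of this theorem at all: it is quoted as the homological version of Theorem 2.15 in McCleary (with Weibel pp.\ 141--143 as an alternative reference), and your filtration-by-columns/filtration-by-rows argument is precisely the standard proof given in those cited sources. Your bookkeeping is correct --- for the column filtration the first-quadrant hypothesis gives boundedness and hence convergence, $d^0 = \partial_v$ yields $E^1 = H_{II}(C)$, and the induced $d^1$ yields $E^2 = H_I H_{II}(C)$, with the row filtration handled by transposition --- so the proposal is sound and matches the argument the paper implicitly relies on.
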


\subsection{The spectral sequence of a bisimplicial set}

To a bisimplicial set $X$, we can associate a double complex $FX$ with $(FX)_{p,q} = \Z X_{p,q}$. The horizontal and vertical differentials are induced by the horizontal and vertical face maps as for the chain complex of a simplicial set, i.e.,
\begin{align*}
    \partial_h &= \sum_{i} (-1)^{i} (d_i, \id), \\
    \partial_v &= \sum_{i} (-1)^{i} (\id, d_i).
\end{align*}
A simple calculation shows that $\partial_h^2 = 0$, $\partial_v^2 = 0$ and $\partial_h \partial_v = \partial_v \partial_h$.

The following theorem of Dold and Puppe tells us that the homology of $\Tot (FX)$ equals the homology of $\diag X$ \cite[Theorem 2.9]{DoldPuppe} (see also \cite[Theorem 2.5]{GoerssJardine}).
Hence, we can compute the homology of $\diag X$ with either of the spectral sequences in \autoref{thm:dbl_cplx_spec_seq}.

\begin{theorem}[Dold--Puppe]\label{thm:dold_puppe}
    Let $X$ be a bisimplicial set and $FX$ its associated double complex. Then
    \begin{equation*}
        H_{*} \Tot (FX) \cong H_{*} (\diag X)
    \end{equation*}
\end{theorem}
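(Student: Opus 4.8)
The plan is to prove this via the method of acyclic models, by exhibiting a natural chain homotopy equivalence between the two functors $G_1 \colon X \mapsto C(\diag X)$ and $G_2 \colon X \mapsto \Tot(FX)$ from bisimplicial sets to non-negatively graded chain complexes, where $C(\diag X)$ denotes the unnormalized chain complex of the diagonal. In degree $n$ we have $G_1(X)_n = \Z X_{n,n}$ and $G_2(X)_n = \bigoplus_{p+q=n}\Z X_{p,q}$, and in degree $0$ both equal $\Z X_{0,0}$, so the two functors agree on $H_0$. The strategy is to produce natural chain maps in both directions lifting this degree-$0$ identification, and then to conclude, by the uniqueness clause of acyclic models, that they are mutually inverse up to natural homotopy.

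As models I would take the representable bisimplicial sets $\Delta^{p,q}$ (represented by the object $([p],[q])$). By the Yoneda lemma $X_{p,q} \cong \Hom(\Delta^{p,q}, X)$, so $G_2(X)_n = \bigoplus_{p+q=n}\Z\Hom(\Delta^{p,q}, X)$ is free on the models $\{\Delta^{p,q}\}$ with basis the images of the universal bisimplices; likewise $G_1(X)_n = \Z\Hom(\Delta^{n,n}, X)$ is free on the models $\{\Delta^{n,n}\}$. Hence both functors are free with respect to the single collection of all representables. It remains to check acyclicity on these models. For $G_1$ we have $G_1(\Delta^{p,q}) = C(\diag \Delta^{p,q}) = C(\Delta^p \times \Delta^q)$, whose homology is that of the contractible space $|\Delta^p| \times |\Delta^q|$ and so is concentrated in degree $0$. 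For $G_2$ the double complex $F\Delta^{p,q}$ is exactly the tensor product $C(\Delta^p) \otimes C(\Delta^q)$, and since each factor is a complex of free abelian groups acyclic in positive degrees with $H_0 \cong \Z$, the Künneth theorem shows $\Tot(F\Delta^{p,q})$ is acyclic in positive degrees with $H_0 \cong \Z$. The use of unnormalized complexes causes no difficulty here, since by \autoref{thm:simpl_homology_degen} they compute the same homology as the normalized ones.

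With freeness and acyclicity established, the acyclic models theorem applies in both directions. Because $G_1$ is free and $G_2$ is acyclic on the models, the identity natural transformation on $H_0$ lifts to a natural chain map $\phi\colon G_1 \to G_2$, unique up to natural chain homotopy; symmetrically one obtains $\psi\colon G_2 \to G_1$. The composites $\psi\circ\phi$ and $\id_{G_1}$ both lift the identity on $H_0$, so uniqueness forces $\psi\circ\phi \simeq \id_{G_1}$, and likewise $\phi\circ\psi \simeq \id_{G_2}$. Thus $\phi$ is a natural chain homotopy equivalence, and applying $H_*$ yields the isomorphism $H_*\Tot(FX) \cong H_*(\diag X)$.

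The routine bookkeeping, namely the Yoneda identifications and the Künneth computation, is easy; I expect the main point requiring care to be the clean statement and bidirectional application of the acyclic models machinery, in particular arranging that the degree-$0$ normalization is respected on both sides. An alternative, more hands-on route would avoid acyclic models entirely by writing down the Alexander--Whitney map $C(\diag X) \to \Tot(FX)$ together with the Eilenberg--Zilber (shuffle) map in the other direction, and verifying directly that one composite is the identity while the other is chain homotopic to the identity via an explicit homotopy. This is essentially the original Eilenberg--Zilber argument, but it trades conceptual clarity for a substantially longer explicit calculation, so I would reach for acyclic models first.
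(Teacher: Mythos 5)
The paper offers no proof of this statement at all: it is imported directly from the literature, with citations to Dold--Puppe and to Goerss--Jardine, so there is no internal argument to compare yours against. Your acyclic-models argument is the standard self-contained proof of this generalized Eilenberg--Zilber theorem, and it is correct in its essentials: the Yoneda identifications make $G_1$ free on the models $\Delta^{n,n}$ and $G_2$ free on the models $\Delta^{p,q}$ with $p+q=n$; $\diag\Delta^{p,q}=\Delta^p\times\Delta^q$ has the homology of a point; and $F\Delta^{p,q}\cong C(\Delta^p)\otimes C(\Delta^q)$, which meshes with the paper's convention that the double complex has commuting differentials and the sign is inserted only when forming $\Tot$, so the K\"unneth theorem gives acyclicity in positive degrees with $H_0\cong\Z$. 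The bidirectional application of acyclic models (each functor is both free and acyclic on the same model set, so lifts and round-trip homotopies exist in all four required combinations) then finishes the argument. What you buy over the paper's approach is self-containedness; what the citation buys the paper is brevity, since this theorem is genuinely classical.

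One step deserves to be made explicit rather than asserted: ``the two functors agree on $H_0$'' does not follow merely from $G_1(X)_0=G_2(X)_0=\Z X_{0,0}$; you must check that the images of the two degree-one differentials inside $\Z X_{0,0}$ coincide, so that the identity map descends to a natural isomorphism $H_0G_1\cong H_0G_2$. This is a short computation: for $x\in X_{1,1}$ one has
\begin{equation*}
    (d_0,d_0)x-(d_1,d_1)x=\bigl[(d_0,\id)-(d_1,\id)\bigr](\id,d_0)x+\bigl[(\id,d_0)-(\id,d_1)\bigr](d_1,\id)x,
\end{equation*}
so the diagonal boundary image lies in the total-complex boundary image; conversely, applying the diagonal relation to the degenerate bisimplex $(\id,s_0)y$ for $y\in X_{1,0}$ recovers $(d_0,\id)y-(d_1,\id)y$ (and symmetrically in the vertical direction), giving the reverse inclusion. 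With that line added, your proof is complete.
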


As a bisimplicial set has an infinite amount of degenerate simplices, computing the spectral sequence associated to $\Tot (FX)$ can be impractical. However, we will show that, just as for the chain complex of a simplicial set, we can in fact ignore the degenerate simplices (both the vertical and horizontal ones). For the proof, we will need the following lemma.

\begin{lemma}\label{lemma:subcplx_union}
    Let $C_{*}$ be a chain complex, and let $D_{*}$ and $D'_{*}$ be subcomplexes of $C_{*}$. Suppose 
    \begin{align*}
        H_n(D) &\cong 0,\\
        H_n(D') &\cong 0, \text{ and }\\
        H_n(D \cap D') &\cong 0
    \end{align*}
    for all $n \in \Z$. Then,
    \begin{equation*}
        H_n(D + D') \cong 0
    \end{equation*}
    for all $n \in \Z$.
\end{lemma}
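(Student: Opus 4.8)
The plan is to reduce the statement to the long exact sequence in homology coming from a Mayer--Vietoris-type short exact sequence of chain complexes. Since $D$, $D'$, $D \cap D'$, and $D + D'$ are all subcomplexes of $C_*$ (the sum and intersection of subcomplexes are again subcomplexes, as both are closed under the restricted differential), everything in sight is a chain complex and all the maps I will write down automatically commute with the differentials.

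Concretely, I would first establish the short exact sequence of chain complexes
\begin{equation*}
    0 \to D \cap D' \xrightarrow{\iota} D \oplus D' \xrightarrow{\pi} D + D' \to 0,
\end{equation*}
where $\iota(x) = (x, -x)$ and $\pi(a, b) = a + b$. The verification is routine: $\iota$ is clearly injective; $\pi$ is surjective onto $D + D'$ by definition of the sum; and a pair $(a,b)$ lies in $\ker \pi$ exactly when $b = -a$ with $a \in D$ and $-a \in D'$, i.e.\ when $a \in D \cap D'$, so $\ker \pi = \operatorname{im} \iota$. Both maps are degreewise group homomorphisms commuting with $\partial$, so this is indeed a short exact sequence of chain complexes.

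I would then feed this into the associated long exact sequence in homology, which reads
\begin{equation*}
    \cdots \to H_n(D \cap D') \to H_n(D) \oplus H_n(D') \to H_n(D + D') \to H_{n-1}(D \cap D') \to \cdots.
\end{equation*}
By hypothesis $H_n(D) \oplus H_n(D') \cong 0$ and $H_{n-1}(D \cap D') \cong 0$ for every $n$, so $H_n(D + D')$ sits between two zero groups and is therefore itself zero. This gives $H_n(D + D') \cong 0$ for all $n$, as claimed.

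There is no serious obstacle here; the only step requiring care is checking the exactness of the short exact sequence (in particular the identification $\ker \pi = \operatorname{im} \iota$, which uses that $D'$ is a subgroup so that $a \in D' \iff -a \in D'$). Once that Mayer--Vietoris sequence is in place, the conclusion is immediate from the long exact sequence and the vanishing hypotheses.
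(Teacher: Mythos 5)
Your proof is correct and follows exactly the same route as the paper: the Mayer--Vietoris short exact sequence $0 \to D \cap D' \to D \oplus D' \to D + D' \to 0$ with maps $x \mapsto (x,-x)$ and $(x,y) \mapsto x+y$, followed by the long exact sequence in homology and the vanishing hypotheses. Your write-up is in fact more detailed than the paper's, which leaves the exactness check and the final deduction implicit.
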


Here, $D_* \cap D'_*$ and $D_* + D'_*$ are the subcomplexes of $C_*$ defined by
\footnote{Note the difference between $D_* + D'_*$ and the direct sum $D_* \oplus D'_*$.}
\begin{align*}
    (D \cap D')_n &= D_n \cap D'_n, \text{ and} \\
    (D + D')_n &= D_n + D'_n = \{x + y : x \in D_n, y \in D'_n\}.
\end{align*}

\begin{proof}
    There is a short exact sequence of chain complexes
    \begin{equation*}
        0 \rightarrow D_{*} \cap D'_{*} \rightarrow D_{*} \oplus D'_{*} \rightarrow D_{*} + D'_{*} \rightarrow 0,
    \end{equation*}
    where the first map is $x \mapsto (x, -x)$, and the second map is $(x, y) \mapsto x + y$. The induced long exact sequence in homology then proves the lemma.
\end{proof}

Now, let $D^h_{p,q}$ be the set of horizontal degeneracies in $X_{p,q}$, i.e., the simplices on the form $(s_i, \id) x$ for some $x \in X_{p-1, q}$ and some $i$. Likewise, let $D^v_{p,q}$ be the set of vertical degeneracies in $X_{p,q}$. Then $\Z D^h_{p,q}$ and $\Z D^v_{p,q}$ are subgroups of $(FX)_{p,q}$. Now, as $D^h_{*,q}$ are just the degeneracies in the simplicial set $X_{*,q}$, we have that $\partial_h (D^h_{p,q}) \subseteq D^h_{p-1,q}$ (as stated in \autoref{sec:simplicial_homology}). Furthermore, for $(s_i, \id) x \in D^h_{p,q}$,
\begin{equation}\label{eq:hor_ver_commute}
\begin{aligned}
    \partial_v (s_i, \id) x &= \left(\sum_{i} (-1)^{i} (\id, d_i)\right) (s_i, \id) x \\
    & = \sum_{i} (-1)^{i} (s_i, d_i) x = (s_i, \id) \left(\sum_{i} (-1)^{i} (\id, d_i)\right) x,
\end{aligned}
\end{equation}
so $\partial_v (D^h_{p,q}) \subseteq D^h_{p,q-1}$. Hence, $F D^h$ given by $(F D^h)_{p,q} = \Z D^h_{p,q}$ is a well defined sub-double complex of $FX$, and we can form the quotient double complex $FX / F D^h$. Likewise, we have well-defined double complexes $F D^v$ and $FX/F D^v$. We then also have a sub-double complex $F(D^h \cup D^v)$, again given by $F(D^h \cup D^v)_{p,q} = \Z (D^h_{p,q} \cup D^v_{p,q})$, and we can form the quotient $FX / F(D^h \cup D^v)$.

\begin{proposition}\label{thm:ignore_degeneracies}
    Let $X$ be a bisimplicial set. Then
    \begin{equation*}
        H_{*} \Tot (FX) \cong H_{*} \Tot \left(FX / F(D^h \cup D^v)\right).
    \end{equation*}
\end{proposition}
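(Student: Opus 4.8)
The plan is to reduce the statement to the acyclicity of the total complex of the degeneracies and then conclude by a long exact sequence. Since all nonzero groups sit in the range $p,q \ge 0$, the total complex in each degree $n$ is the finite direct sum $\bigoplus_{p+q=n}(\,\cdot\,)_{p,q}$, so $\Tot$ is exact, and the inclusion $F(D^h \cup D^v) \hookrightarrow FX$ of sub-double-complexes induces a short exact sequence of chain complexes
\begin{equation*}
0 \to \Tot\bigl(F(D^h \cup D^v)\bigr) \to \Tot(FX) \to \Tot\bigl(FX / F(D^h \cup D^v)\bigr) \to 0.
\end{equation*}
The associated long exact sequence in homology shows that it suffices to prove that $H_*\bigl(\Tot(F(D^h\cup D^v))\bigr)\cong 0$.

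To obtain this, I would apply \autoref{lemma:subcplx_union} inside the chain complex $C_* = \Tot(FX)$, taking $D_* = \Tot(FD^h)$ and $D'_* = \Tot(FD^v)$. Because $D^h_{p,q}$ and $D^v_{p,q}$ are subsets of the basis $X_{p,q}$ of the free abelian group $(FX)_{p,q}$, we have $\Z D^h_{p,q} + \Z D^v_{p,q} = \Z(D^h_{p,q}\cup D^v_{p,q})$ and $\Z D^h_{p,q}\cap \Z D^v_{p,q} = \Z(D^h_{p,q}\cap D^v_{p,q})$; applying the exact functor $\Tot$ gives $D_* + D'_* = \Tot(F(D^h\cup D^v))$ and $D_*\cap D'_* = \Tot(F(D^h\cap D^v))$. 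Thus the conclusion of \autoref{lemma:subcplx_union} is precisely what we want, provided its three hypotheses hold, namely that $\Tot(FD^h)$, $\Tot(FD^v)$ and $\Tot(F(D^h\cap D^v))$ all have vanishing homology.

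Each of these is a first-quadrant double complex, so both spectral sequences of \autoref{thm:dbl_cplx_spec_seq} converge to its total homology, and it is enough to exhibit a vanishing $E^2$-page. For $FD^h$ I would take horizontal homology first (the sequence ${}_{II}E$): for fixed $q$ the row $((FD^h)_{*,q},\partial_h)$ is exactly the subcomplex of degeneracies of the simplicial chain complex of the simplicial set $X_{*,q}$, whose homology vanishes by \autoref{cor:simpl_homology_degen}; hence $H_I(FD^h)=0$ and $\Tot(FD^h)$ is acyclic. Symmetrically, taking vertical homology first (the sequence ${}_{I}E$) and applying \autoref{cor:simpl_homology_degen} to the columns $X_{p,*}$ shows that $\Tot(FD^v)$ is acyclic.

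The main obstacle is the intersection term $F(D^h\cap D^v)$, whose rows are not literally the degeneracies of $X_{*,q}$. The key observation I would establish is that, for fixed $q$, the vertically degenerate simplices $W_p := D^v_{p,q}$ form a simplicial subset $W$ of the horizontal simplicial set $X_{*,q}$: the horizontal operators $(d_i,\id)$ and $(s_i,\id)$ commute with the vertical degeneracy $(\id,s_j)$, hence preserve vertical degeneracy. I then claim $D^h_{p,q}\cap D^v_{p,q}$ is exactly the set of degenerate simplices of $W$. Indeed, if $x\in W_p$ is degenerate in $X_{*,q}$, write $x=(s_i,\id)x'$; then $x'=(d_i,\id)x$ lies in $W$ since $W$ is closed under horizontal faces, so $x$ is already a degeneracy inside $W$, and the reverse inclusion is immediate. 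Consequently $((F(D^h\cap D^v))_{*,q},\partial_h)$ is the subcomplex of degeneracies of $W$, so \autoref{cor:simpl_homology_degen} again gives vanishing horizontal homology and $\Tot(F(D^h\cap D^v))$ is acyclic. With all three hypotheses verified, \autoref{lemma:subcplx_union} yields $H_*\bigl(\Tot(F(D^h\cup D^v))\bigr)\cong 0$, and the long exact sequence above completes the proof.
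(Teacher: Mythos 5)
Your proof is correct, and its overall skeleton coincides with the paper's: both arguments reduce the statement, via exactness of $\Tot$ and the long exact sequence, to showing $H_*\bigl(\Tot F(D^h\cup D^v)\bigr)\cong 0$, and both obtain this from \autoref{lemma:subcplx_union} applied to $\Tot FD^h$ and $\Tot FD^v$, whose acyclicity is established the same way in both texts (a row- or column-wise application of \autoref{cor:simpl_homology_degen} followed by \autoref{thm:dbl_cplx_spec_seq}). The genuine difference is the intersection term $F(D^h\cap D^v)$. The paper handles it by asserting that the degenerate simplices of $\diag X$ are precisely the bidegenerate ones, so that $\diag(D^h\cap D^v)$ is the degeneracy subcomplex of $\diag X$, and then invoking Dold--Puppe (\autoref{thm:dold_puppe}) to transfer acyclicity from the diagonal to the total complex. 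You instead argue row-wise: for fixed $q$ the vertically degenerate simplices form a simplicial subset $W = D^v_{*,q}$ of $X_{*,q}$, and $D^h_{*,q}\cap D^v_{*,q}$ is exactly the set of degeneracies of $W$ (your verification, writing $x=(s_i,\id)x'$ and noting $x'=(d_i,\id)x\in W$ by closure under horizontal faces, is sound), so \autoref{cor:simpl_homology_degen} applies once more and the spectral sequence gives acyclicity. Your route is more elementary and self-contained: it avoids Dold--Puppe at this step and avoids the nontrivial Eilenberg--Zilber-type fact about diagonal degeneracies, which the paper uses without proof; it is in fact the same device the paper itself deploys later in the proof of \autoref{thm:reduced_spec_seq_collapse}. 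What the paper's route buys is brevity, since \autoref{thm:dold_puppe} is already on hand; what yours buys is uniformity (all three acyclicity claims follow from one row-wise argument) together with explicit justification of the identifications $\Tot F(D^h\cup D^v) = \Tot FD^h + \Tot FD^v$ and $\Tot F(D^h\cap D^v) = \Tot FD^h \cap \Tot FD^v$ and of the short exact sequence of total complexes, which the paper states without comment.
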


\begin{proof}
    From \autoref{cor:simpl_homology_degen}, we get that $H_I (FD^h) \cong 0$ everywhere. \autoref{thm:dbl_cplx_spec_seq} then gives us that $H_{*}(\Tot FD^h) \cong 0$. 
    By the same argument for $D^v$, we get that $H_{*} (\Tot FD^v) \cong 0$.

    Now, the degenerate simplices in the diagonal $\diag X$ are precisely the elements of $D^h \cap D^v$. Applying \autoref{cor:simpl_homology_degen} again, we get that $H_{*}(\diag(D^h \cap D^v)) = 0$, and applying \autoref{thm:dold_puppe}, we get that $H_{*}(\Tot F(D^h \cap D^v)) \cong 0$.

    Note now that $\Tot F(D^h \cup D^v) = \Tot (F D^h + F D^v) = (\Tot F D^h) + (\Tot F D^v)$ and that $\Tot F(D^h \cap D^v) = (\Tot F D^h) \cap (\Tot F D^v)$.
    \sloppy It now follows from \autoref{lemma:subcplx_union} that $H_{*} (\Tot F(D^h \cup D^v)) \cong 0$. Hence, $H_{*} (\Tot FX / F(D^h \cup D^v)) \cong H_{*} \left((\Tot FX) / (\Tot F(D^h \cup D^v))\right) \cong H_{*} (\Tot FX)$.
\end{proof}

\subsection{The spectral sequence of the discrete flow category}

From the discrete flow category $\Flo_{\Sigma}[X]$, we get a bisimplicial set $S := \DN \Flo_{\Sigma}[X]$, as described in \autoref{sec:double_nerve}.
As the realization of the diagonal, $|\diag S|$, is (homotopy equivalent to) the classifying space of $\Flo_{\Sigma}[X]$, we can compute the homology of this classifying space through simplicial homology on $\diag S$. This is again equal to the homology of the total complex, $\Tot FS$, by \autoref{thm:dold_puppe}.
Finally, we can compute the homology of $\Tot FS$ with one of two the spectral sequences in \autoref{thm:dbl_cplx_spec_seq}. We will use the spectral sequence $_{I}E$, i.e., we will take vertical homology first. As we shall show, this spectral sequence collapses on page 2 (\autoref{theoremB} in the introduction).

Recall that an element of $S_{p,q}$ consists of $q$ horizontally composable sets of $p$ vertically composable 2--morphisms. For p-categories, this is a set $\{f_{0,j} \Rightarrow \dots \Rightarrow f_{p,j} : f_{i,j} \colon x_{j} \to x_{j+1}, 0 \le j < q \}$. The face and degeneracy maps, both in the vertical and in the horizontal direction, are as for the ordinary nerve.

First, we compute some examples.

\begin{example}

We consider the discrete flow category of a discrete Morse function on $S^2$, as described in \cite[Section 6.1]{Nanda}. Let $S$ be the double nerve of the discrete flow category. By \autoref{thm:ignore_degeneracies}, we need only consider the elements of $S$ that are not in the image of any degeneracy map (either vertical or horizontal) when constructing the spectral sequence.

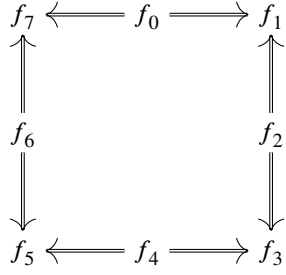
\begin{figure}[htbp]
    \centering
    \begin{tikzpicture}
        \diagram{d}{3em}{3em}{
            f_7 & f_0 & f_1 \\
            f_6 & \ & f_2 \\
            f_5 & f_4 & f_3 \\
        };
        
        \draw[->, double] (d-1-2) -- (d-1-3);
        \draw[->, double] (d-1-2) -- (d-1-1);
        \draw[->, double] (d-2-3) -- (d-1-3);
        \draw[->, double] (d-2-3) -- (d-3-3);
        \draw[->, double] (d-2-1) -- (d-1-1);
        \draw[->, double] (d-2-1) -- (d-3-1);
        \draw[->, double] (d-3-2) -- (d-3-3);
        \draw[->, double] (d-3-2) -- (d-3-1);
    \end{tikzpicture}
    \caption{The Hom poset $\Hom(t, w)$.}
    \label{fig:S2_hom_t_w}
\end{figure}

As the discrete flow category has only two objects, $t$ and $w$, there are no nondegenerate horizontal compositions, and thus $E^0_{0, q} = 0$ for $q \ge 2$. In the bottom row, we have $E^0_{0,0} = \Z^2$, corresponding to the two objects, and $E^0_{p, 0} = 0$ for $p \ge 1$. Furthermore, $\Hom(t, w)$ (illustrated in \autoref{fig:S2_hom_t_w}) has 8 nondegenerate 1-morphisms, 8 nondegenerate 2-morphisms (i.e., partial orders) and no nondegenerate compositions of morphisms, so $E^0_{0, 1} = E^0_{1, 1} = \Z^8$ and $E^0_{p, 1} = 0$ for $p \ge 2$.

\begin{figure}[htbp]
    \centering
    \begin{sseqpage}[title = Page 0]
        \class["\Z^2"](0,0)
        \class["\Z^8"](0,1)
        \class["\Z^8"](1,1)
        \d0 (0,1)
    \end{sseqpage}
    \quad
    \begin{sseqpage}[title = Page 1]
        \class["\Z"](0,0)
        \class["\Z^7"](0,1)
        \class["\Z^8"](1,1)
        \d1 (1,1)
    \end{sseqpage}
    \quad
    \begin{sseqpage}[title = Page 2]
        \class["\Z"](0,0)
        \class["\Z"](1,1)
    \end{sseqpage}
    \caption{The first three pages of the spectral sequence for the discrete flow category of a discrete Morse function on $S^2$.}
    \label{fig:S2_spectral_sequence}
\end{figure}

The 0th page, $E^0$, is illustrated in \autoref{fig:S2_spectral_sequence}, together with the two succeeding pages. To compute $E^1$, we need only find the differential $\partial^0_{0,1} \colon E^0_{0,1} \to E^0_{0,0}$. This image takes $f_i \colon t \to w$ to $(w - t)$, and hence its image is $\Z (1, -1) \cong \Z$. This gives us $E^1$, as illustrated.

To compute $E^2$, we need the differential $\partial^1_{1,1} \colon E^1_{1,1} \to E^1_{0,1}$. Observe that that $E^1_{0,1}$ is generated by $[f_i - f_0], i \in {1, \dots, 7}$. From this, it's clear that $\partial^1_{1,1}$ is surjective (for example, $[f_3 - f_0]$ is $\partial^1 \left[ (f_2 \Rightarrow f_3) - (f_2 \Rightarrow f_1) + (f_0 \Rightarrow f_1) \right] )$. Hence, we get $E^2$ as illustrated. The resulting homology of the total complex, and hence $S^2$, is then as follows.

\begin{equation*}
    H_n(S^2) =
    \begin{cases}
        \Z, &\quad n = 0,2, \\ 
        0, &\quad \text{ otherwise.} \\
    \end{cases}
\end{equation*}

\end{example}

\begin{example}

We compute a slightly more complicated example, the discrete flow category of the discrete Morse function on $D^3$ from \autoref{ex:D3_flo}. There are three critical cells: $f$, $t$ and $x$. We computed the Hom poset $\Hom(f, x)$ in \autoref{ex:D3_flo}. The Hom poset $\Hom(t, x)$ is as for the $S^2$ example, i.e., the one illustrated in \autoref{fig:S2_hom_t_w}. Finally, $\Hom(f, t)$ consists of just one morphism, $(f > t)$.

\begin{figure}[htbp]
    \centering
    \begin{sseqpage}[title = Page 0]
        \class["\Z^3"](0,0)
        \class["\Z^{30}"](0,1)
        \class["\Z^8"](0,2)
        \class["\Z^{60}"](1,1)
        \class["\Z^8"](1,2)
        \class["\Z^{32}"](2,1)
        \d0 (0, 2)
        \d0 (0, 1)
        \d0 (1, 2)
    \end{sseqpage}
    \ 
    \begin{sseqpage}[title = Page 1]
        \class["\Z"](0,0)
        \class["\Z^{20}"](0,1)
        \class["\Z^{52}"](1,1)
        \class["\Z^{32}"](2,1)
        \d1 (1, 1)
        \d1 (2, 1)
        \class(2,2) 
    \end{sseqpage}
    \  
    \begin{sseqpage}[title = Page 2]
        \class["\Z"](0,0)
        \class(2,2) 
    \end{sseqpage}
    \caption{The first three pages of the spectral sequence for the discrete flow category of a discrete Morse function on $D^3$.}
    \label{fig:D3_spectral_sequence}
\end{figure}

We count the nondegenerate simplices to get $E^0$. In $E_{p, 0}$, as in the previous example, we get the $E^0_{p, 0} = 0$ for $p \ge 1$, and $E^0_{0,0} = \Z^3$, one copy of $\Z$ for each of the critical cells. There are 30 non-identity morphisms, so $E^0_{0,1} = \Z^{30}$. There are 60 non-identity partial order relations, so $E^0_{1,1} = \Z^{60}$. The only Hom poset with composable partial orders is $\Hom(f,x)$, and there are 32 possible nondegenerate pairs of composable partial orders, so $E^0_{2,1} = \Z^{32}$. Finally, there are no nondegenerate triples of composable partial orders, so $E^0_{p, 1} = 0$ for $p \ge 3$.

The only non-identity (horizontally) composable morphisms are $(f > t)$ composed with one of the eight morphisms in $\Hom(t, x)$, so $E^0_{0,2} = \Z^8$. Similarly, the only nondegenerate horizontally composable partial orders are $\left( (f > t) \Rightarrow (f > t) \right)$ composed with one of the eight non-identity partial orders in $\Hom(t, x)$\footnote{Even though the partial order $\left( (f > t) \Rightarrow (f > t) \right)$ is degenerate, its horizontal composition with a nondegenerate partial order is not.}. Thus, $E^0_{1,2} = \Z^8$. Now, all elements in $S_{2,2}$ includes either a vertical identity (on $(f > t)$) or a horizontal identity (on $f$, $t$ or $x$), and is thus degenerate, so $E^0_{2,2} = 0$, and the same holds for $E^0_{p,2}$ for $p \ge 3$.

Finally, any horizontal composition of 3 or more must include an identity (as we only have 3 critical cells), so $E^0_{p,q} = 0$ for $q \ge 3$. We can now draw the 0th page of the spectral sequence. The first three pages are shown in \autoref{fig:D3_spectral_sequence}.

To get $E^1$ from $E^0$, me must inspect the three differentials
\begin{align*}
    &\partial^0_{0,1} \colon E^0_{0,1} \to E^0_{0,0}, \\
    &\partial^0_{0,2} \colon E^0_{0,2} \to E^0_{0,1}, \text{ and }\\
    &\partial^0_{1,2} \colon E^0_{1,2} \to E^0_{1,1}.
\end{align*}
The image of $\partial^0_{0,1}$ is the span of $\{f - t, f - x, t - x\}$, so its cokernel is $\Z$. Now, $\partial^0_{0,2}$ takes a composable pair $\{f, g\}$ to $(f + g - f \circ g)$. Now, as $f \circ g$ is different for each of the composable pairs $\{f, g\}$ in $E^0_{0,2}$, the kernel of $\partial^0_{0,2}$ is 0. Furthermore, we see that computing the cokernel of $\partial^0_{0,2}$ amounts to identifying $f \circ g$ with $f + g$ for each of these 8 compositions, so the cokernel is torsion-free and $E^1_{0,1} = \Z^{30-8-2} = \Z^{20}$.
By a similar argument, the kernel of $\partial^0_{1,2}$ is 0 and its cokernel is $\Z^{52}$.

Computing $E^2$ from $E^1$ can be done through tedious computation. Working out this computation gives that $\partial^1_{1,1}$ is surjective and that $\Ima \partial^1_{2,1} = \ker \partial^1_{1,1}$, so $E^2_{0,1}, E^2_{1,1}$ and $E^2_{2,1}$. The spectral sequence now collapses, and we get that the computed homology $\Z$ in degree 0 and 0 everywhere else, as expected from $D^3$.

\end{example}

The key takeaway from this example is that all rows above $q=1$ became 0 everywhere on the $E^1$--page. As we shall now show, this always happens, which causes the spectral sequence to collapse on the second page. For the rest of the section, when we write a \emph{discrete flow category}, we shall mean a discrete flow category induced from a discrete Morse function on a regular CW complex, as defined in \autoref{sec:discrete_flow_category}.

For the following result, recall that for a p-category $C$, $\NN C$ is the simplicial category described in \autoref{sec:double_nerve}.

\begin{theorem}\label{thm:nn_dfc_is_ufc}
    Let $X$ be a regular CW complex and let $\Sigma$ be a Morse system induced from a discrete Morse function.
    Then $(\NN \Flo_{\Sigma}[X]) ([n])$ is a unique factorization category for all $n \ge 0$.
\end{theorem}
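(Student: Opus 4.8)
The plan is to verify the defining properties of a unique factorization category (\autoref{def:finite_directed_cat} together with unique factorization) for each $(\NN \Flo_{\Sigma}[X])([n])$, transporting the corresponding properties of $\Flo_{\Sigma}[X]$ established in \autoref{ex:unique_factorization_cats}. Finiteness and directedness are quick. The objects of $(\NN \Flo_{\Sigma}[X])([n])$ are the finitely many critical cells, and a morphism $a \to b$ is an $n$--chain $f_0 \Rightarrow \dots \Rightarrow f_n$ in the finite poset $\Hom(a,b)$, so every Hom-set is finite. Since $\Hom(a,a) = \{\id_a\}$, the only chain $a \to a$ is the identity $\{\id_a, \dots, \id_a\}$; and if $\Hom_{(\NN \Flo_{\Sigma}[X])([n])}(a,b) \ne \emptyset$ with $a \ne b$, then $\Hom(a,b) \ne \emptyset$, so $\dim a > \dim b$ and $\Hom(b,a) = \emptyset$, whence there is no chain $b \to a$. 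Thus $(\NN \Flo_{\Sigma}[X])([n])$ is finite directed, and in particular every morphism admits \emph{some} factorization into indecomposables.

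The heart of the argument is a description of when a chain factors, phrased through the sequence representations of \autoref{thm:computing_representation}. Recall from the discussion following \autoref{thm:flo_po_desc} that $f \Rightarrow f'$ holds precisely when the sequence representation of $f'$ is obtained from that of $f$ by inserting cells into its right-pointing (descending) runs; in particular, for a chain $F = (f_0 \Rightarrow \dots \Rightarrow f_n)$ the sequence of $f_0$ is a subsequence of that of each $f_j$. A critical cell is never part of a regular pair, so any critical cell occurring in a sequence representation lies in the interior of a descending run and is therefore left untouched by these insertions. I would first prove the key lemma: $F$ factors as $G \circ H$ through a critical cell $c \ne a, b$ if and only if $c$ occurs in the sequence of $f_0$ (equivalently, of every $f_j$). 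The forward direction is immediate from componentwise composition; for the converse, cutting each $f_j$ at the common cell $c$ produces morphisms $h_j \colon a \to c$ and $g_j \colon c \to b$, and since the insertions realizing $f_j \Rightarrow f_{j+1}$ cannot cross the fixed cell $c$, the before-$c$ and after-$c$ parts separately inherit the subsequence relations, giving $h_0 \Rightarrow \dots \Rightarrow h_n$ and $g_0 \Rightarrow \dots \Rightarrow g_n$, i.e. genuine morphisms $H, G$ with $F = G \circ H$. Consequently $F$ is indecomposable exactly when $f_0$ is indecomposable in $\Flo_{\Sigma}[X]$.

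With this lemma, uniqueness follows by projecting to the initial vertex. The assignment $\pi_0 \colon F = (f_0 \Rightarrow \dots \Rightarrow f_n) \mapsto f_0$ is a functor $(\NN \Flo_{\Sigma}[X])([n]) \to \Flo_{\Sigma}[X]$ (composition is componentwise), it sends non-identities to non-identities, and by the key lemma it sends indecomposables to indecomposables. Given a factorization $F = A_m \circ \dots \circ A_1$ into indecomposables, applying $\pi_0$ yields a factorization of $f_0$ into indecomposables in $\Flo_{\Sigma}[X]$, which is unique by \autoref{ex:unique_factorization_cats}; this fixes the number $m$ of factors and the intermediate critical cells $a = c_0, c_1, \dots, c_m = b$. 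Finally, for each $\ell$ the $\ell$-th components of the $A_i$ give a factorization of $f_\ell$ through the prescribed cells $c_1, \dots, c_{m-1}$, and factoring a single morphism of $\Flo_{\Sigma}[X]$ through a prescribed critical cell is unique (as recorded in \autoref{ex:unique_factorization_cats}); hence each $A_i$ is forced, and the factorization of $F$ is unique.

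I expect the main obstacle to be the converse direction of the key lemma: verifying that the partial-order relations descend to the two halves of a cut, i.e. that the insertions witnessing $f_j \Rightarrow f_{j+1}$ genuinely respect the split at the critical cell $c$. Making this precise rests on the observation that critical cells occupy the interior of descending runs and so have a fixed position under the subsequence description of $\Rightarrow$; once this is pinned down, the remaining bookkeeping is routine.
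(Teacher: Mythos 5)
Your proof is correct and takes essentially the same route as the paper's: both reduce everything to the unique factorization property of $\Flo_{\Sigma}[X]$ from \autoref{ex:unique_factorization_cats}, use \autoref{thm:flo_po_desc} to show that each component $f_j$ of a chain factors through exactly the critical cells occurring in $f_0$, decompose the chain componentwise, and deduce uniqueness of the factorization from that of $f_0$. If anything, your treatment of the one delicate step --- that the relations $f_j \Rightarrow f_{j+1}$ genuinely restrict to the two halves of a cut at a critical cell, because insertions cannot cross a cell that belongs to no regular pair --- is spelled out more carefully than in the paper, which asserts the componentwise decomposition without comment.
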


\begin{proof}
    As observed in \autoref{ex:unique_factorization_cats}, $(\NN \Flo_{\Sigma}[X])([0])$, which is just $\Flo_{\Sigma}[X]$ with the poset structure removed, is a unique factorization category.

    Now, let $n \ge 1$. The morphisms in $(\NN \Flo_{\Sigma}[X]) ([n])$ are represented by $f_0 \Rightarrow \dots \Rightarrow f_n$ with $f_i$ morphisms in $\Flo_{\Sigma}[X]$. It is clear that $(\NN \Flo_{\Sigma}[X]) ([n])$ is a finite directed category. Furthermore, given a representative $f_0 \Rightarrow \dots \Rightarrow f_n$, $f_0 \colon x \to y$ has a unique factorization through critical cells $x = c_0, c_1, \dots, c_k = y$. By \autoref{thm:flo_po_desc}, then all $f_i$ factors through these critical cells. Let $f_i = f_i^{1} \circ \dots \circ f_i^{k}$ be the the decomposition of $f_i$ through $c_0, \dots, c_k$. We then have a decomposition $\{f_0, \dots, f_n\} = \{f_0^1, \dots, f_n^1\} \circ \dots \circ \{f_0^k, \dots, f_n^k\}$. The sets on the right hand side are indecomposable, as $f_0^i$ is indecomposable for all $i$. Furthermore, the decomposition is unique, as $f_0^1 \circ \dots f_0^k$ is the unique decomposition of $f_0$, and all other $f_i^j$ are decided uniquely by this decomposition.
\end{proof}

\begin{corollary}
    Let $\{E^r_{*,*}\}$ be the spectral sequence associated to a discrete flow category. 
    Then $E^1_{p,q} = 0$ for $q \ge 2$ 
\end{corollary}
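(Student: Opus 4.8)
The plan is to identify the $E^1$-page column by column with the homologies of the nerves of the categories $(\NN \Flo_{\Sigma}[X])([p])$, and then quote the fact that these are unique factorization categories. Recall that the spectral sequence in question is the sequence $_IE$ of \autoref{thm:dbl_cplx_spec_seq} applied to the double complex $FS$ with $S = \DN \Flo_{\Sigma}[X]$, and that we take vertical homology first, so that $\partial^0 = \partial_v = \sum_i (-1)^i(\id, d_i)$ and $E^1_{p,q}$ is the $q$-th homology of the $p$-th column of $E^0$ with respect to $\partial_v$. First I would unwind the construction of the double nerve from \autoref{sec:double_nerve}: there we have $S_{p,q} = \N\bigl((\NN \Flo_{\Sigma}[X])([p])\bigr)_q$, so for a fixed $p$ the column $(FS)_{p,\bullet}$ equipped with $\partial_v$ is exactly the (unnormalized) simplicial chain complex of the simplicial set $\N\bigl((\NN \Flo_{\Sigma}[X])([p])\bigr)$. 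Taking vertical homology therefore yields the identification
\begin{equation*}
    E^1_{p,q} \cong H_q\Bigl(\N\bigl((\NN \Flo_{\Sigma}[X])([p])\bigr)\Bigr).
\end{equation*}

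With this identification in hand the statement is immediate. By \autoref{thm:nn_dfc_is_ufc}, the category $(\NN \Flo_{\Sigma}[X])([p])$ is a unique factorization category for every $p \ge 0$. Applying \autoref{cor:ufc_homology} to this category gives $H_q\bigl(\N((\NN \Flo_{\Sigma}[X])([p]))\bigr) \cong 0$ for all $q \ge 2$, and hence $E^1_{p,q} = 0$ for $q \ge 2$, as claimed. Thus the proof is essentially an assembly of \autoref{thm:nn_dfc_is_ufc} and \autoref{cor:ufc_homology} through the column-wise identification above.

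Since there is no genuinely hard computational step, the main thing to get right is the bookkeeping of conventions that underlies the displayed identification. In particular, the double-complex labelling of $\partial_v$ as the differential acting on the $q$-index (the composition/nerve direction) is opposite to the $2$-categorical ``vertical/horizontal'' terminology used in \autoref{sec:double_nerve}, so before proceeding I would confirm the convention against the sample computation $\partial^0_{0,1}\colon f \mapsto w - t$ in the $S^2$ example to be sure that each column of the $\partial_v$-complex really is the nerve chain complex of the corresponding $(\NN \Flo_{\Sigma}[X])([p])$. Finally, since by \autoref{thm:ignore_degeneracies} the reduced complex $FS/F(D^h \cup D^v)$ has the same total homology, one may set the argument up on $FS$, where the column identification is cleanest, and need not worry about the degenerate simplices at this stage.
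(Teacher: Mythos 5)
Your proposal is correct and follows essentially the same route as the paper: the paper's proof likewise identifies $E^1_{p,q}$ as the $q$th homology of the column $E^0_{p,*} = (\DN C)_{p,*} = \N\bigl((\NN C)([p])\bigr)$ and then combines \autoref{thm:nn_dfc_is_ufc} with \autoref{cor:ufc_homology}. Your extra care about the vertical/horizontal indexing conventions is sound but does not change the argument.
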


\begin{proof}
    Let $C$ be the discrete flow category.
    By definition, $E^1_{p,q}$ is the $q$th homology of the simplicial set $E^0_{p,*} = (\DN C)_{p, *} = \N\left((\NN C)([p])\right)$. By combining \autoref{thm:nn_dfc_is_ufc} and \autoref{cor:ufc_homology}, we get that this is 0 when $q \ge 2$.
\end{proof}

\begin{lemma}
    Let $\{E^r_{*,*}\}$ be the spectral sequence associated to a discrete flow category. 
    Then $E^2_{p,0} = 0$ for $p \ge 1$.
\end{lemma}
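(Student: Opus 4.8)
The plan is to identify the bottom row of the $E^1$--page explicitly and recognize it as the chain complex of a constant simplicial abelian group. Recall from the proof of the previous corollary that $E^1_{p,q}$ is the $q$-th homology of the simplicial set $\N\!\left((\NN C)([p])\right)$, where $C = \Flo_{\Sigma}[X]$. In particular $E^1_{p,0} \cong \Z\,\pi_0\!\left((\NN C)([p])\right)$, the free abelian group on the set of connected components of the category $(\NN C)([p])$. The horizontal differential $\partial_h$ then turns $E^1_{*,0}$ into the (unnormalized) chain complex of the simplicial abelian group $[p] \mapsto \Z\,\pi_0\!\left((\NN C)([p])\right)$, so that $E^2_{p,0} = H_p(E^1_{*,0})$ is precisely the $p$-th homology of this simplicial abelian group.

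The key step is to show that $[p] \mapsto \pi_0\!\left((\NN C)([p])\right)$ is the constant simplicial set on $\pi_0(C)$. First I would observe that all of these categories share the same object set, namely the critical cells of $f$ (for $p \ge 1$ these appear as the degeneracies $(s_0)^p a$, but the underlying cell is unchanged). Next comes the crucial observation: there is a morphism $a \to b$ in $(\NN C)([p])$ precisely when $\Hom_C(a,b) \neq \emptyset$, independently of $p$. Indeed, by the description in \autoref{sec:double_nerve} a morphism $a \to b$ in $(\NN C)([p])$ is a chain $f_0 \Rightarrow \dots \Rightarrow f_p$ in $\Hom_C(a,b)$; any single morphism $f$ yields the degenerate constant chain $f \Rightarrow \dots \Rightarrow f$, and conversely the existence of such a chain forces $\Hom_C(a,b)$ to be nonempty. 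Hence two critical cells lie in the same component of $(\NN C)([p])$ if and only if they are joined by a zigzag of morphisms of $C$, a relation that does not depend on $p$. Since every face and degeneracy map of $\NN C$ fixes the underlying critical cell of each object, the induced maps on $\pi_0$ are all the identity, and the simplicial set $[p] \mapsto \pi_0\!\left((\NN C)([p])\right)$ is therefore constant on $\pi_0(C)$.

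Finally, I would invoke the standard fact that a constant simplicial abelian group has vanishing homology in positive degrees: in its normalized chain complex, every term in degree $\ge 1$ consists entirely of degeneracies and hence is zero. Consequently $E^2_{p,0} = H_p(E^1_{*,0}) = 0$ for all $p \ge 1$, which is the claim (and $E^2_{0,0} \cong \Z\,\pi_0(C)$, recovering the $\Z$ seen in the examples for connected $X$).

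The main obstacle is the middle step, namely establishing that the connectivity relation, and thus $\pi_0$, is genuinely independent of $p$. The subtle point is precisely the use of the degenerate chains $f \Rightarrow \dots \Rightarrow f$ to witness a morphism at every level $p$, even when a Hom poset of $C$ has only a single element; this is what prevents the components from splitting apart as $p$ grows and is exactly why the bottom row collapses to a constant simplicial object. Everything else is a routine unwinding of the definition of the spectral sequence and of $\NN C$.
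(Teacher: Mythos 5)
Your proposal is correct and follows essentially the same route as the paper: identify $E^1_{p,0}$ with the free abelian group on $\pi_0\left((\NN C)([p])\right)$, observe that connectivity of critical cells is independent of $p$ (the paper asserts this; you justify it via the constant chains $f \Rightarrow \dots \Rightarrow f$), and conclude that the bottom row is the chain complex of a constant simplicial abelian group, whose higher homology vanishes. The only cosmetic difference is that the paper computes the alternating differentials $\partial_h (s_0)^p [x]$ explicitly where you invoke the normalization argument; both yield the same conclusion.
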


\begin{proof}
    Let $C$ be the discrete flow category.
    We have that $E^1_{p,0}$ is the free abelian group on the connected components of $\N\left((\NN C)([p])\right)$. Now, if $a$ and $b$ are connected in $\N\left((\NN C)([p])\right)$, then they are connected in $\N\left((\NN C)([i])\right)$ for all $i$.
    Hence, $E^1_{p,0} = \{(s_0)^p [x] : [x] \in E^1_{0,0}\}$. Now, as $\partial_h (s_0)^p [x]$ is $[x]$ when $p$ is odd and 0 when $p$ is even, taking the horizontal homology gives us 0 everywhere except in $E^1_{0,0}$.
\end{proof}

With the previous two results, we are finally ready to prove \autoref{theoremB}.

\begin{theorem}[Theorem B]
    The spectral sequence associated to a discrete flow category collapses on the second page.
\end{theorem}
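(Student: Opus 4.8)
The plan is to read off the shape of the $E^2$ page from the two preceding results and then observe that every higher differential vanishes for degree reasons. First I would pin down which groups on the $E^2$ page can be nonzero. Because $E^2_{p,q}$ is a subquotient of $E^1_{p,q}$, the corollary giving $E^1_{p,q} = 0$ for $q \ge 2$ immediately yields $E^2_{p,q} = 0$ for $q \ge 2$; and the preceding lemma gives $E^2_{p,0} = 0$ for $p \ge 1$. Hence the only entries of $E^2$ that may be nonzero are the groups $E^2_{p,1}$ in the row $q = 1$ (for $p \ge 0$) together with the single corner group $E^2_{0,0}$. Since each later page $E^r$ with $r \ge 2$ is a subquotient of $E^2$, the nonzero entries of every such $E^r$ are supported on the same row-and-corner.

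Next I would check that each differential $\partial^r$ with $r \ge 2$ is zero. Recall that $\partial^r$ has bidegree $(-r, r-1)$, so it raises the vertical degree by $r - 1 \ge 1$ and lowers the horizontal degree by $r$. If the source $E^r_{p,1}$ lies in the row $q = 1$, then its image lands in the row $q = 1 + (r-1) = r \ge 2$, which is zero. The only remaining possibly-nonzero source is the corner $E^r_{0,0}$, whose image sits in horizontal degree $0 - r = -r < 0$ and is therefore also zero. Thus no nonzero class on $E^r$ admits a nonzero outgoing differential for $r \ge 2$, and dually none admits a nonzero incoming one.

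Consequently $\partial^r = 0$ for all $r \ge 2$, so $E^2 = E^3 = \dots = E^\infty$, which is precisely the assertion that the spectral sequence collapses on page $2$. I do not anticipate a genuine obstacle: all the substance is contained in the two preceding statements, and this final step is a routine bookkeeping argument about the support of the $E^2$ page. The only point requiring a moment's care is the boundary case $r = 2$, where a differential might a priori connect the row $q = 0$ to the row $q = 1$; but the only nonzero class in row $q = 0$ sits at the corner $(0,0)$, whose outgoing $\partial^2$ already targets negative horizontal degree, so this case causes no trouble.
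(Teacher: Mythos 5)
Your proposal is correct and follows essentially the same route as the paper: it combines the corollary ($E^1_{p,q}=0$ for $q\ge 2$, hence $E^2_{p,q}=0$ there) with the lemma ($E^2_{p,0}=0$ for $p\ge 1$) to confine the nonzero entries of $E^2$ to the corner $E^2_{0,0}$ and the row $q=1$, and then kills all differentials $\partial^r$, $r\ge 2$, for bidegree reasons. The only difference is that you spell out the degree bookkeeping (including the $r=2$ boundary case) that the paper leaves as a one-line assertion.
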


\begin{proof}
    As $E^1_{p,q} = 0$ for $q \ge 2$, the same applies to $E^2$. Hence, the only potential nonzero entries on page 2 are $E^2_{0,0}$ and $E^2_{p,1}, p\ge0$. Hence, there can be no nonzero differentials from page 2 and onward.
\end{proof}

Note that the above results applies to the ordinary spectral sequence associated to a bisimplicial set, \emph{not} the one where degenerate simplices have been removed (as in \autoref{thm:ignore_degeneracies}). However, for computational purposes it is useful to remove degenerate simplices from the spectral sequence, as there are always infinitely many degenerate simplices. Therefore, we now show that the spectral sequence also collapses on page 2 when removing degeneracies.

\begin{theorem}\label{thm:reduced_spec_seq_collapse}
    Let $X$ be the double nerve of a discrete flow category. Then the spectral sequence associated to the double complex
    \begin{equation*}
        FX / F(D^h \cup D^v)
    \end{equation*}
    collapses at the second page.
\end{theorem}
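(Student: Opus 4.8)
The plan is to compare the reduced spectral sequence with the unreduced one entry by entry, rather than merely matching their abutments through \autoref{thm:ignore_degeneracies}. Throughout I work with the $_{I}E$ convention (vertical homology first), so the filtration is by the horizontal degree $p$. Write $FX$ for the unreduced double complex and $\hat F = FX/F(D^h\cup D^v)$ for the reduced one. The key structural observation is that $\hat F$ is obtained from $FX$ in two stages: first $N^vFX := FX/FD^v$ removes the vertical degeneracies, and then $\hat F$ is the further quotient of $N^vFX$ by its horizontal degeneracies, so that $\hat F = FX/F(D^h\cup D^v)$.

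First I would record the structure of the unreduced $E^1$ page. Since $[p]\mapsto(\NN \Flo_\Sigma[X])([p])$ is a simplicial category, $[p]\mapsto\N((\NN \Flo_\Sigma[X])([p]))$ is a bisimplicial set, and hence for each fixed $q$ the groups $E^1_{p,q}=H_q(\N((\NN \Flo_\Sigma[X])([p])))$ assemble into a simplicial abelian group $A^{(q)}_\bullet$ (with the horizontal simplicial structure of $\NN \Flo_\Sigma[X]$), whose associated unnormalized chain complex is the row $E^1_{\bullet,q}$ with differential $\partial^1$. Consequently $E^2_{p,q}=H_p(A^{(q)})$. By \autoref{thm:nn_dfc_is_ufc} and \autoref{cor:ufc_homology} we have $A^{(q)}=0$ for $q\ge 2$, and combined with the vanishing $E^2_{p,0}=0$ for $p\ge 1$ proved for the unreduced sequence, the $E^2$ page of $FX$ is concentrated in the positions $\{q=1\}\cup\{(0,0)\}$ (this is exactly the content used in the proof of \autoref{theoremB}).

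Next I would identify the reduced $E^1$ page as the horizontal normalization of these same simplicial abelian groups. The projection $FX\to N^vFX$ is a map of double complexes that is column-wise the normalization quasi-isomorphism, so by \autoref{thm:simpl_homology_degen} it induces an isomorphism on $E^1$, and the $E^1$ page of $N^vFX$ is again $A^{(q)}$. Now $N^vFX$ is a simplicial object, in the simplicial direction $p$, in the abelian category of (vertical) chain complexes; the Dold--Kan decomposition therefore splits it naturally and degreewise, as a decomposition of double complexes compatible with $\partial_v$, into the normalized summand $\hat F$ and a degenerate summand. Because the functor $H^v_q$ (vertical homology) is additive and carries the natural Dold--Kan idempotent on $N^vFX$ to the corresponding idempotent on $A^{(q)}$ (the idempotent being a universal polynomial in the simplicial operators, which $H^v_q$ preserves), taking $H^v_q$ commutes with this horizontal normalization. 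Hence
\[
\hat E^1_{p,q}=H^v_q(\hat F_{p,\bullet})=(N^h A^{(q)})_p,
\]
the horizontally normalized chains of $A^{(q)}$, and therefore $\hat E^2_{p,q}=H_p(N^hA^{(q)})$.

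Finally I would invoke the normalization theorem for simplicial abelian groups — the general form underlying \autoref{thm:simpl_homology_degen}, which holds for any simplicial abelian group and not only for the free one on a simplicial set — to conclude $H_p(N^hA^{(q)})\cong H_p(A^{(q)})$, that is $\hat E^2_{p,q}\cong E^2_{p,q}$ for all $p,q$. Thus the reduced $E^2$ page is also concentrated in $\{q=1\}\cup\{(0,0)\}$, and the bidegree count on $\partial^r$ (of bidegree $(-r,r-1)$ for $r\ge 2$) shows that every higher differential vanishes, exactly as in the proof of \autoref{theoremB}; the reduced sequence collapses at page~2. The main obstacle, and the one step that must be handled with care, is precisely the interaction between deleting horizontal degeneracies and taking vertical homology first: for a general quotient these operations do not commute, and what rescues the argument is that horizontal normalization is a \emph{natural direct summand} by Dold--Kan, so that the additive functor $H^v_q$ respects the splitting. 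Verifying that this splitting is compatible with $\partial_v$, so that $H^v_q$ distributes over it, is the crux of the proof.
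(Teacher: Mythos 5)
Your proof is correct, but it takes a genuinely different route from the paper's. The paper argues at chain level: it first shows $H_{II}(FX/FD^h)_{p,q} \cong 0$ for $q \ge 2$ by explicitly correcting a relative vertical cycle $x$ (replacing $x$ by $x - s^h_{p-1} d^h_p x$ and iterating until one obtains an honest vertical cycle in $FX$), then invoking the unreduced vanishing $H_{II}(FX)_{p,q} \cong 0$; it then passes to the full quotient via the short exact sequence $0 \to FD^v/(FD^h \cap FD^v) \to FX/FD^h \to FX/F(D^h \cup D^v) \to 0$, using \autoref{cor:simpl_homology_degen} to kill $H_{II}$ of both $FD^v$ and $FD^h \cap FD^v$, and it handles the row $q = 0$ by inspection ($X_{p,0}$ has no nondegenerate simplices for $p \ge 1$). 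You instead prove the stronger statement that the reduced and unreduced spectral sequences have isomorphic $E^2$ pages: the reduced $E^1$ page is the horizontal normalization $N^h A^{(q)}$ of the unreduced one, because the Dold--Kan splitting of $N^v FX$ (viewed as a simplicial object in vertical chain complexes) into normalized and degenerate summands is cut out by universal polynomial idempotents in the horizontal operators and is therefore preserved by the additive functor $H^v_q$; the normalization theorem for simplicial abelian groups then gives $\hat E^2_{p,q} \cong E^2_{p,q}$, and the collapse follows from the unreduced concentration plus bidegree reasons. Your identification of the crux is accurate: the naive claim that a vertical cycle lying in $D^h$ must be a sum of horizontal degeneracies of vertical cycles is false in general, and the natural-direct-summand form of Dold--Kan is exactly what repairs it --- it plays the same role as the paper's explicit correction procedure. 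As for what each approach buys: the paper's proof is elementary and self-contained, using only simplicial identities, long exact sequences, and results already established in the paper; yours is shorter modulo two standard external inputs (the universal-idempotent form of the Dold--Kan splitting, and the normalization theorem for arbitrary, not necessarily free, simplicial abelian groups, which strictly extends \autoref{thm:simpl_homology_degen} as stated), and it yields more: since your comparison is induced by the quotient map of double complexes $FX \to FX/F(D^h \cup D^v)$, you actually obtain an isomorphism of spectral sequences from page 2 onward, so that collapse of either spectral sequence is equivalent to collapse of the other.
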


\begin{proof}
    Let $\{E^r_{*,*}\}$ be the associated spectral sequence. Firstly, there are no nondegenerate simplices in $X_{p,0}$ for $p \ge 1$, so $E^0_{p,0} \cong 0$ for $p \ge 1$ (and the same holds for the succeeding pages).

    We now show that $E^1_{p,q} \cong 0$ for $q \ge 2$. Recall that $E^1_{p,q} = H_{II}(FX/F(D^h \cup D^v))_{p,q}$ First, we show that $H_{II}(FX/FD^h)_{p,q} \cong 0$ for $q \ge 2$. Suppose that $[x] \in (FX/FD^h)_{p,q}$, with $q \ge 2$, such that $\partial_v [x] = 0$. Then $x \in FX_{p,q}$ is such that $\partial_v x \in FD^h$, so $\partial_v x = s^h_0 y_0 + \dots + s^h_{p-1} y_{p-1}$ for some $y_0, \dots, y_{p-1} \in FX_{p-1,q}$, where $s_i^h = (s_i, \id)$ is the $i$th horizontal degeneracy map. Now, let $x' = x - s^h_{p-1} d^h_p x$. Then $[x'] = [x]$ in $(FX/FD^h)_{p,q}$, and
    \begin{align*}
        \partial_v x' &= \partial_v x - \partial_v (s^h_{p-1} d^h_p) x
        = \partial_v x - (s^h_{p-1} d^h_p) \partial_v x \\
        &= \left(s^h_0 y_0 + \dots + s^h_{p-1} y_{p-1}\right) - \left( s^h_{p-1} d^h_p s^h_0 y_0 + \dots + s^h_{p-1} d^h_p s^h_{p-1} y_{p-1} \right).
    \end{align*}
    We now utilize the fact that $d_p s_{p-1} = \id$ and that when $i < p-1$, $s_{p-1} d_p s_i = s_{p-1} s_i d_{p-1} = s_i s_{p-2} d_{p-1}$. Thus, the sum above rewrites to
    \begin{align*}
        s^h_0 & (y_0 - s^h_{p-2} d^h_{p-1} y_0) + \dots + s^h_{p-2} (y_{p-2} - s^h_{p-2} d^h_{p-1} y_{p-2}) + s^h_{p-1} (y_{p-1} - y_{p-1}) \\
        & = s^h_0 y_0' + \dots s^h_{p-2} y_{p-2}'.
    \end{align*}
    Now, continuing this process by letting $x'' = x' - s^h_{p-2} d^h_{p-1} x'$, and so on, we end up with a $\hat x \in FX_{p,q}$ such that $[\hat x] = [x] \in (FX/FD^h)_{p,q}$ and $\partial_v \hat x = 0$. But as $H_{II}(FX)_{p,q}$ is 0 when $q \ge 2$, then $\hat x$ is in the image of $\partial_v$, and thus $[x]$ is in the image of $\partial_v$. Hence, $H_{II}(FX/FD^h)_{p,q} \cong 0$ when $q \ge 2$.

    It remains to show that $H_{II}(FX/F(D^h \cup D^v))_{p,q} \cong 0$ when $q \ge 2$. For this, we use that $G/(AB) = (G/A)/(B/(A \cap B))$ for groups $A,B < G$, so we have a short exact sequence of chain complexes
    \begin{equation}\label{eq:hor_ver_degen_ses}
        0 \rightarrow FD^v/(FD^h\cap FD^v) \rightarrow FX/FD^h \rightarrow FX/F(D^h \cup D^v) \rightarrow 0.
    \end{equation}
    We know from \autoref{cor:simpl_homology_degen} that $H_{II}(FD^v) \cong 0$. Furthermore, one can show that $D^h_{p,*}$ together with the vertical face and degeneracy maps is a simplicial set\footnote{This follows from the fact that if $x$ is a horizontal degeneracy, then so is $d^v_i x$ and $s^v_i x$, as $d^v_i s^h_j = s^h_j d^v_i$ and $s^v_i s^h_j = s^h_j s^v_i$.}.
    The degeneracies in this simplicial set are $D^h \cap D^v$, so, again by \autoref{cor:simpl_homology_degen}, $H_{II}(F(D^h \cap D^v)) = H_{II}(FD^h\cap FD^v)) \cong 0$. In then follows (from the long exact sequence in homology of short exact sequences of chain complexes) that $H_{II}(F^v/(FD^h\cap FD^v)) = 0$.

    Finally, we can use the long exact sequence in homology from the short exact sequence in \eqref{eq:hor_ver_degen_ses} to conclude that $H_{II}(FX/F(D^h \cup D^v))_{p,q} \cong H_{II}(FX/FD^h)_{p,q}$ (for all $p$ and $q$). In particular, $H_{II}(FX/F(D^h \cup D^v))_{p,q} \cong 0$ for $q \ge 2$.

    Hence, $E^1_{p,q}$ is 0 when $q \ge 2$, and this, together with the fact that $E^1_{p,0} \cong 0$ when $p \ge 1$, shows that the spectral sequence collapses on the second page.
\end{proof}

Observe that due to this theorem, when computing the spectral sequence, we never need to know $E^0_{p,q}$ for $q \ge 3$. In fact, the only things we need to compute at page 1 is the row $E^1_{*,1}$ and the single square $E^1_{0,0}$. The squares in $E^1_{*,1}$ also have a relatively simple description; the differential $\partial_v E^0_{p,2} \to E^0_{p,1}$ sends a composable pair $\{f, g\}$ to $f + g - g \circ f$. Hence, $E^1_{p,1}$ is the same as $E^0_{p,1}$ where all morphisms have been identified with the sum of the morphisms in their (unique) decomposition of indecomposable morphisms.

We now show another example of a spectral sequence computation, using \autoref{thm:reduced_spec_seq_collapse} and the observations in the last example.

\begin{example}
We compute the discrete flow category of a discrete Morse function on the 2--torus, illustrated in \autoref{fig:torus_dmf}. We compute the associated spectral sequence (with degeneracies removed), with rational coefficients (to make the computations easier).

\begin{figure}[htbp]
    \centering
    \begin{tikzpicture}[scale=1]
        \filldraw[lightgray] (0,0) -- (4,0) -- (4, 4) -- (0, 4) -- cycle;
        \draw (1, 3) node{$A$};
        \draw (3, 3) node{$B$};
        \draw (1, 1) node{$C$};
        \draw (3, 1) node{$D$};
        
        \draw[thick, ->-] (0, 4) -- (2, 4) node[midway, anchor=south]{$\alpha$};
        \draw[thick, ->-] (2, 4) -- (4, 4) node[midway, anchor=south]{$\beta$};
        \draw[thick, ->-] (0, 0) -- (2, 0) node[midway, anchor=north]{$\alpha$};
        \draw[thick, ->-] (2, 0) -- (4, 0) node[midway, anchor=north]{$\beta$};
        
        \draw[thick] (0, 2) -- (2, 2) node[midway, anchor=south]{$\gamma$};
        \draw[thick] (2, 2) -- (4, 2) node[midway, anchor=south]{$\delta$};
        
        \draw[thick, ->-] (0, 4) -- (0, 2) node[midway, anchor=east]{$\varepsilon$};
        \draw[thick, ->-] (0, 2) -- (0, 0) node[midway, anchor=east]{$\zeta$};
        \draw[thick, ->-] (4, 4) -- (4, 2) node[midway, anchor=west]{$\varepsilon$};
        \draw[thick, ->-] (4, 2) -- (4, 0) node[midway, anchor=west]{$\zeta$};
        
        \draw[thick] (2, 4) -- (2, 2) node[midway, anchor=east]{$\eta$};
        \draw[thick] (2, 2) -- (2, 0) node[midway, anchor=east]{$\theta$};
        
        \filldraw (0, 4) circle (1.5 pt) node[anchor=south east]{$a$};
        \filldraw (2, 4) circle (1.5 pt) node[anchor=south]{$b$};
        \filldraw (4, 4) circle (1.5 pt) node[anchor=south west]{$a$};
        \filldraw (0, 2) circle (1.5 pt) node[anchor=east]{$c$};
        \filldraw (2, 2) circle (1.5 pt) node[anchor=south east]{$d$};
        \filldraw (4, 2) circle (1.5 pt) node[anchor=west]{$c$};
        \filldraw (0, 0) circle (1.5 pt) node[anchor=north east]{$a$};
        \filldraw (2, 0) circle (1.5 pt) node[anchor=north]{$b$};
        \filldraw (4, 0) circle (1.5 pt) node[anchor=north west]{$a$};

        \draw[red, ultra thick, dashed, ->] (2, 4) -- (3, 4);
        \draw[red, ultra thick, dashed, ->] (2, 3) -- (3, 3);
        \draw[red, ultra thick, dashed, ->] (2, 2) -- (3, 2);
        \draw[red, ultra thick, dashed, ->] (2, 1) -- (3, 1);
        \draw[red, ultra thick, dashed, ->] (2, 0) -- (3, 0);
        \draw[red, ultra thick, dashed, ->] (1, 2) -- (1, 1);
        \draw[red, ultra thick, dashed, ->] (0, 2) -- (0, 1);
        \draw[red, ultra thick, dashed, ->] (4, 2) -- (4, 1);
    \end{tikzpicture}
    \caption{A discrete Morse function (represented by its gradient vector field) on a regular CW decomposition of the 2--torus. The regular pairs are given by red, dashed arrows.}
    \label{fig:torus_dmf}
\end{figure}
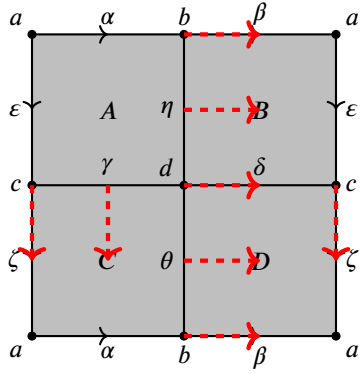

The critical cells are $A$, $\alpha$, $\epsilon$ and $a$. We compute the Hom posets with \autoref{algorithm_1}, and get:
\begin{align*}
    \Hom(A, \alpha) &= \left\{
        f_1 = [A > \alpha], 
        \ f_2 = [A > \gamma < C > \alpha]
    \right\}, \\
    \Hom(A, \varepsilon) &= \left\{
        f_1' = [A > \varepsilon], 
        \ f_2' = [A > \eta < B > \varepsilon]
    \right\}, \\
    \Hom(\alpha, a) &= \left\{
        g_1 = [\alpha > a], 
        \ g_2 = [\alpha > b < \beta > a]
    \right\}, \\
    \Hom(\varepsilon, a) &= \left\{
        g_1' = [\varepsilon > a], 
        \ g_2' = [\varepsilon > c < \zeta > a]
    \right\}.
\end{align*}
The only Hom poset with nontrivial partial orders is $\Hom(A,a)$, which consists of 36 morphisms in four connected components, as illustrated in \autoref{fig:Hom_A_a}.

\begin{figure}[htbp]
    \centering
    \begin{align*}
        & g_1 \circ f_1 \Rightarrow \ \bullet \ \Leftarrow g_1' \circ f_1' \\ \\
        & g_2 \circ f_1 \Rightarrow \ \bullet \ \Leftarrow \quad \dots \quad \Rightarrow \ \bullet \ \Leftarrow g_1' \circ f_2' \\ \\
        & g_1 \circ f_2 \Rightarrow \ \bullet \ \Leftarrow \quad \dots \quad \Rightarrow \ \bullet \ \Leftarrow g_2' \circ f_1' \\ \\
        & g_2 \circ f_2 \Rightarrow \ \bullet \ \Leftarrow \quad \dots \quad \Rightarrow \ \bullet \ \Leftarrow g_2' \circ f_2'
    \end{align*}
    \caption{The four components in the Hom poset $\Hom(A, a)$. The unnamed morphisms (represented by dots) are all indecomposable.}
    \label{fig:Hom_A_a}
\end{figure}

\begin{figure}[htbp]
    \centering
    \begin{sseqpage}[title = Page 0]
        \class["\Q^4"](0,0)
        \class["\Q^{44}"](0,1)
        \class["\Q^8"](0,2)
        \class["\Q^{32}"](1,1)
        \d0 (0, 2)
        \d0 (0, 1)
    \end{sseqpage}
    \quad
    \begin{sseqpage}[title = Page 1]
        \class["\Q"](0,0)
        \class["\Q^{33}"](0,1)
        \class["\Q^{32}"](1,1)
        \d1 (1, 1)
        \class(1,2) 
    \end{sseqpage}
    \quad
    \begin{sseqpage}[title = Page 2]
        \class["\Q"](0,0)
        \class["\Q^2"](0,1)
        \class["\Q"](1,1)
        \class(1,2) 
    \end{sseqpage}
    \caption{The first three pages of the spectral sequence for a discrete flow category of the 2--torus.}
    \label{fig:T2_spectral_sequence}
\end{figure}

The discrete flow category has 4 objects, 44 morphisms, 8 nondegenerate horizontal compositions of morphisms, 32 non-identity partial orders, and no other nondegenerate compositions. Hence, the $E^0$ page is as illustrated in \autoref{fig:T2_spectral_sequence}.

On page 1, $E^1_{0,0}$ is $\Q$, as the nerve of the category has one connected component. Hence, $E^1_{0,1} = \Q^{44-8-3} = \Q^{33}$. Furthermore, $E^1_{0,1}$ can be described as the cycles in $E^0_{0,1}$, where all compositions $f \circ g$ has been identified with $f + g$.

Now, to compute $E^2$, we need only determine the map $\partial^2_{1,1} \colon E^1_{1,1} \to E^1_{0,1}$. For this, observe that for each of the four components in \autoref{fig:Hom_A_a}, we can assign an alternating sum of partial orders $x_i$, so that
\begin{align*}
    \partial^2 x_1 = g_1' \circ f_1' - g_1 \circ f_1 = g_1' + f_1' - g_1 - f_1, \\
    \partial^2 x_2 = g_1' \circ f_2' - g_2 \circ f_1 = g_1' + f_2' - g_2 - f_1, \\
    \partial^2 x_3 = g_2' \circ f_1' - g_1 \circ f_2 = g_2' + f_1' - g_1 - f_2, \\
    \partial^2 x_4 = g_2' \circ f_2' - g_2 \circ f_2 = g_2' + f_2' - g_2 - f_2. \\
\end{align*}
Now, $\partial^2 (x_1 - x_2 - x_3 + x_4) = 0$. One can verify (for example through writing $\partial^2$ as a matrix and row reducing) that $(x_1 - x_2 - x_3 + x_4)$ generates the kernel of $\partial^2$, so that $E^2_{1,1} = \Q$ and $E^2_{0,1} = \Q^2$. The spectral sequence now collapses, as predicted by \autoref{thm:reduced_spec_seq_collapse}, and the computed homology of the 2--torus is:
\begin{equation*}
    H_n(T^2; \Q) =
    \begin{cases}
        \Q, &\quad n = 0,2, \\ 
        \Q^2, &\quad n = 1, \\ 
        0, &\quad \text{ otherwise.} \\
    \end{cases}
\end{equation*}

Observe that in this case we got a nonzero group in $E^2_{1,1}$ even though the homology of (the nerve of) the Hom poset $\Hom(A, a)$ is 0 in degree 1. This happens because on page 1, the compositions $f \circ g$ becomes identified with the sums $f + g$, so that the differential $\partial^2 \colon E^1_{1,1} \to E^1_{0,1}$ has a nonzero element in the kernel, even though the differential $\partial_h \colon E^0_{1,1} \to E^0_{0,1}$ does not.

\end{example}

\newpage
\section*{Declarations}
\subsection*{Ethical approval}
Not applicable.
\subsection*{Funding}
Not applicable.
\subsection*{Availability of data and materials}
Not applicable.

\bibliographystyle{plain}
\bibliography{ref}

\end{document}